\renewcommand{\div}{\operatorname{div}}
\newtheorem{theorem}{Theorem}[section]}
\newtheorem{notation}{Notation}[section]}
\newtheorem{proposition}[theorem]{Proposition}}
\newtheorem{corollary}[theorem]{Corollary}}
\newtheorem{lemma}[theorem]{Lemma}}
\newtheorem{definition}[theorem]{Definition}}
\newtheorem{remark}{Remark}[section]}
\renewcommand{\div}{\operatorname{div}}
\newcommand{\esssup}{\operatorname{esssup}}
\newcommand{\sym}{\operatorname{sym}}
\newcommand{\asym}{\operatorname{ssym}}
\newcommand{\supp}{\operatorname{supp}}
\title[Sedimentation of particles in Stokes flow] %Use the shortened version of the full title
      {Sedimentation of particles in Stokes flow}
\author[Amina Mecherbet]{}
\subjclass{35Q70, 76T20, 76D07, 35Q83.}
 \keywords{Suspension flows, Interacting particle systems, Stokes equations, Vlasov-like equations, Method of reflections, Mean field approximation.}
 \email{amina.mecherbet@umontpellier.fr}
\begin{document}
\maketitle

% Enter the first author's name and address:
\centerline{\scshape Amina Mecherbet}
\medskip
{\footnotesize
% please put the address of the first author
 \centerline{IMAG, Montpellier University}
   \centerline{Place Eug\`ene Bataillon}
   \centerline{Montpellier, 34090, France.}
} % Do not forget to end the {\footnotesize by the sign }

%\medskip
%
%\centerline{\scshape First-name2 last-name2 and First-name3
%last-name3}
%\medskip
%{\footnotesize
% % please put the address of the second  and third author
% \centerline{ First line of the address of the second author}
%   \centerline{Other lines}
%   \centerline{Springfield, MO 65810, USA}
%}

\bigskip

% The name of the associate editor will be entered by an editorial staff
% "Communicated by the associate editor name" is not needed for special issue.

%The abstract of your paper
\begin{abstract}
In this paper, we consider $N$ identical spherical particles sedimenting in a uniform gravitational field. Particle rotation is included in the model while fluid and particle inertia are neglected. Using the method of reflections, we extend the investigation of \cite{Hofer} by discussing the threshold beyond which the minimal particle distance is conserved for a short time interval independent of $N$. We also prove that the particles interact with a singular interaction force given by the Oseen tensor and justify the mean field approximation  in the spirit of \cite{Hauray} and \cite{HJ}. 
\end{abstract}

%The title of your section 1

\section{Introduction}
In this paper, we consider a system of $N$ spherical particles $(B_i)_{1\leq i \leq N}$ with identical radii $R$ immersed in a viscous fluid satisfying the following Stokes equation: 
\begin{equation}\label{eq_stokes}
\left \{
\begin{array}{rcl}
-\Delta u^N + \nabla p^N & = &0, \\ 
\div u^N & = & 0,
\end{array}
\text { on $\mathbb{R}^3 \setminus \underset{i=1}{\overset{N}{\bigcup}}\overline{B_i}$, } 
\right.
\end{equation}
completed with the no-slip boundary conditions
\begin{equation}\label{cab_stokes}
\left \{
\begin{array}{rcl}
u^N & = &  V_i +\Omega_i \times (x-x_i), \text{ on } \partial {B_i}, \\
\underset{|x| \to \infty}{\lim} |u^N(x)| & = & 0,
\end{array}\right.
\end{equation}
where $(V_i,\Omega_i) \in \mathbb{R}^3\times \mathbb{R}^3 \,,\, 1\leq i \leq N$ represent the linear and angular velocities,
\begin{eqnarray*} 
B_i:= B(x_i,R ).
\end{eqnarray*}
We describe the intertialess motion of the rigid spheres $(B_i)_{1\leq i \leq N}$ by adding to the instantaneous Stokes equation the classical Newton dynamics for the particles $(x_i)_{1\leq i\leq N}$
\begin{equation}\label{ode}
\left \{
\begin{array}{rcl}
\dot{x}_i &= & V_i,\\
F_i+ mg &=& 0,\\
T_i&=& 0,
\end{array}
\right.
\end{equation}
where $m$ denotes the mass of the identical particles adjusted for buoyancy, $g$ the gravitational acceleration, $F_i$ (resp. $T_i$) the drag force (resp. the torque) applied by the the fluid on the $i^{th}$ particle $B_i$ defined by
\begin{eqnarray*}
F_i & := & \int_{\partial B_i} \sigma(u^N,p^N) n, \\
T_i & = &  \int_{\partial B_i} (x-x_i) \times [\sigma(u^N,p^N) n],
\end{eqnarray*}
with $n$ the unit outer normal to $\partial B_i$ and
$\sigma(u^N,p^N)= 2 D(u^N) - p^N \mathbb{I}, $ the stress tensor where $2D(u^N)=\nabla u^N +{ \nabla u^N }^{\top} $.\\
Note that the constant velocities $(V_i,\Omega_i)$ of each particle are unknown and are determined by the prescribed force and torque $F_i=mg$ and $T_i=0$. In \cite{Luke}, the author shows that the linear mapping on $\mathbb{R}^{6N}$
$$(V_i,\Omega_i)_{1\leq i \leq N} \mapsto (F_i,T_i)_{1\leq i \leq N},$$ is bijective for all $N\in \mathbb{N}^*$. This ensures existence and uniqueness of $(u^N,p^N)$ and the velocities.
\begin{remark}[About the modeling and nondimensionalization]\label{rem_kappa}
Equations \eqref{eq_stokes}--\eqref{ode} describe suspensions sedimenting in a uniform gravitational field. Equations \eqref{eq_stokes}, \eqref{cab_stokes} are derived starting from the Navier-Stokes equations and neglecting the inertial terms by means of the Reynolds and Stokes number, see \cite[Chapter 1 Section 1]{Guazzelli&Morris}, \cite{Batchelor}, \cite{Luke} and all the references therein. Analogously, the ODE system \eqref{ode} is obtained by neglecting particle inertia. We refer also to \cite{DGR} where a formal derivation taking into account the slow motion of the system is performed.\\
When considering one spherical particle sedimenting in a Stokes flow, the linear relation between the drag force $F$ and the velocity $V$ is given by the Stokes law
$$
F= - 6 \pi R V\,,
$$
see Section \ref{particular_Stokes} for more details. Stokes law leads to the well-known formula for the fall speed of a sedimenting single particle under gravitational force denoted by \begin{equation}\label{kappag}
\kappa g := \frac{m}{6\pi R} g\,.
\end{equation}
It is important to point out that in our model, a scaling with respect to the velocity fall $\kappa g$ has been performed. This means that the drag forces $(F_i)_{1 \leq i \leq N}$ and the gravitational force $m g$ are terms of order $R$. Consequently, in this paper, $\kappa g$ is a constant of order one. For more details on the derivation of the model, we refer to \cite[Section 1.1]{Hofer} where a nondimensionalization including physical units is provided.
Moreover, as in \cite{DGR}, the particle radius $R$ is assumed to be proportional to $\frac{1}{N}$ so that the collective force applied by the particles on the fluid is of order one. This will be made precise in the presentation of the main assumptions.
\end{remark}
Given initial particle positions $x_i(0):= x_i^0 $, $ 1\leq i \leq N$, we are interested in the asymptotics of the solution when the number of particles $N$ tends to infinity and the radius $R$ tends to zero. The main motivation is to justifiy the representation of the motion of a dispersed phase inside a fluid using Vlasov-Stokes equations in spray theory \cite{Hamdache}, \cite{BDGM}.\\
The analysis of the dynamics is done in \cite{JO} in the dilute case \textit{i.e.} when the minimal distance between particles is at least of order $N^{-1/3}$. The authors prove that the particles do not get closer in finite time. Moreover, in the case where the minimal distance between particles is much larger than $N^{-1/3}$  the result in \cite{JO} shows that particles do not interact and sink like single particles. We refer finally to \cite{Hofer} where the author considers a particle system with minimal distance of order $N^{-1/3}$ and proves that, under a relevant time scale, the spatial density of the cloud converges in a certain averaged sense to the solution of a coupled transport-Stokes equation \eqref{mesoscopic_rho}. \\
Since the desired threshold for the minimal distance is of order $N^{-2/3}$, which allows to tackle randomly distributed particles, we are interested in extending the results for lower orders of the minimal distance. Therefore, in this paper, we continue the investigation of \cite{Hofer} by looking for a more general set of particle configurations that is conserved in time and prove the convergence to the kinetic equation \eqref{mesoscopic_rho}. Also, we include particle rotation in the modeling.
\subsection{Main assumptions and results}
In this Section, we describe the configuration of particles that we consider and present the main results : Theorem \ref{thm1} and Theorem \ref{thm2}.\\
We recall that the particles $B_i$ are spherical with identical radii $R$
$$
B_i= B(x_i,R) \,,\:\:\: 1 \leq i \leq N,
$$
where 
$$
R=\frac{r_0}{N} \,,\,\:\:\: r_0 > 0\,,
$$
with $r_0$ a positive constant satisfying a smallness assumption (see Theorem \ref{thm1}).\\ Due to the quasi-static modeling, the velocities $(V_i(t),\Omega_i(t))_{1 \leq i \leq N}$ at time $t\geq 0$ depend only on the prescribed force $(F_i)_{1 \leq i \leq N}$, torque $(T_i)_{1 \leq i \leq N}$ and the particles position $\left(x_i(t) \right)_{1 \leq i \leq N}$ at the same time $t$. Consequently, we drop the dependence with respect to time in the definition of the set of particle configurations. Keeping in mind that the idea is to start from a configuration of particles that lies in the set and show that it remains in it for a finite time interval.
\begin{definition}[Definition of the set of particle configuration]\label{def_regime}
Let $(X^N)_{N \in \mathbb{N}^*}$ be a configuration of particles, where $X^N:=(x_1, \cdots, x_N)$. We define the minimal distance $d_{\min}^N$ by
% $d_{\min}:=(d_{\min}^N)_{N \in \mathbb{N}^*}$ 
\begin{eqnarray*}
d_{\min}^N &:= &\underset{\underset{1\leq i,j\leq N}{i \neq j }}{\min } \{|x_i-x_j|\, \}\,, \forall N \in \mathbb{N}^*\,.
%\rho^N(t,x) &=& \frac{1}{N} \underset{i=1}{\overset{N}{\sum}} \delta_{x_i(t)}(x)\,.
\end{eqnarray*}
We introduce the particle concentration $M^N$ defined for each positive sequence $(\lambda^N)_{N\in \mathbb{N}^*}$ by
$$
M^N := \underset{x \in \mathbb{R}^3}{\sup} \{ \# \{ i \in \{1,\cdots,N\} \text{ such that } x_i \in \overline{B_\infty(x,\lambda^N}) \} \}\,, \forall N \in \mathbb{N}^*\,.
$$
Given two positive constants $\bar{M}, \mathcal{E}$ and a sequence $(\lambda^N)_{N\in \mathbb{N}^*}$, we define $\mathcal{X}(\bar{M},\mathcal{E})$ as the set of configurations for which $(d_{\min}^N)_{N\in \mathbb{N}^*}$ and $(M^N)_{N \in \mathbb{N}^*}$ satisfy the following assumptions:
\begin{eqnarray}
\underset{N \in \mathbb{N}}{\sup} \, \frac{M^N}{N|\lambda^N|^3}& \leq& \bar{M},\label{bound_concentration}\\
\underset{N \in \mathbb{N}}{\sup} \, \frac{|\lambda^N|^3}{|d_{\min}^N|^2} &\leq& \mathcal{E}\,.\label{hyp1}
\end{eqnarray}
$\lambda^N$ must satisfy the following compatibility conditions:
\begin{eqnarray}\label{compatibility}
\lambda^N \geq d_{\min}^N /2 \,,\,&\underset{N \to \infty}{\lim} \lambda^N= 0\,.
\end{eqnarray}
%The particle configuration occupies a bounded domain: 
%\begin{equation}\label{initial_compactness}
%\bar{R}:= \underset{N \in \mathbb{N}^*}{\sup} \underset{1 \leq i \leq N}{\max} |x_i| < +\infty\,.
%\end{equation}
\end{definition}
\begin{remark}
%Assumption \eqref{compatibility} ensures that
%\begin{equation}\label{conv_dmin}
%\underset{N \to \infty}{\lim} d_{\min}= 0\,.
%\end{equation}
Note that, according to the definition of $M^N$, assumption \eqref{bound_concentration} ensures that
\begin{equation}\label{hyp3}
\frac{1}{N|\lambda^N|^3} \leq \bar{M},
\end{equation}
which yields thanks to assumption \eqref{hyp1}
\begin{equation}\label{hyp4}
d_{\min}^N\geq  \frac{1}{\sqrt{\mathcal{E}} \bar{M}^{1/2}}\, \frac{1}{\sqrt{N}}.
\end{equation}
Since $R \sim \frac{1}{N}$, this leads also
\begin{equation}\label{Rdmin}
\underset{N \to \infty}{\lim} \frac{R}{d_{\min}^N} =0\,,
\end{equation}
which ensures that the particles do not overlap.
\end{remark}
Furthermore, for the proof of the second Theorem \ref{thm2}, the following assumption must be satisfied initially:
\begin{equation}
\underset{N \to \infty}{\lim} \frac{|\lambda^N|^2}{d_{\min}^N(0)}= 0 \,.\label{hyp5}
\end{equation}
Finally, we define $\rho^N$ the spatial density of the cloud by
\begin{eqnarray*}
\rho^N(t,x) = \frac{1}{N} \underset{i=1}{\overset{N}{\sum}} \delta_{x_i(t)}(x)\,,\,&\rho_0^N:= \rho^N(0,x).
\end{eqnarray*}
In the rest of this paper, if needed, we make clear the dependence with respect to time by writing for all $ N\in \mathbb{N}^*$, $X^N(t)=(x_1(t),\cdots, x_N(t))$ for the particles configuration, $d_{\min}^N(t)$ for the minimal distance and $M^N(t)$ the particle concentration at time $t\geq0 $.\\
The main results of this paper are the two following theorems. The first one ensures that the particle configurations considered herein are preserved in a short time interval depending only on the data $r_0$, $\bar{M}$, $\mathcal{E}$, $\kappa |g|$.
\begin{theorem}\label{thm1}
Let $(X^N(0))_{N \in \mathbb{N}^*}$ be the initial position of the particles. Assume that there exists $\bar{M}$, $\mathcal{E}$ and a sequence $(\lambda^N)_{N \in \mathbb{N}^*}$ such that $(X^N(0))_{N \in \mathbb{N}^*}$ lies in the set $\mathcal{X}(\bar{M}, \mathcal{E})$ \textit{i.e.} assumptions \eqref{bound_concentration}, \eqref{hyp1}, \eqref{compatibility} hold true initially.\\
If $\bar{M}^{1/3} r_0$ is small enough, there exists $N^* \in \mathbb{N}^*$ depending on $(r_0,\bar{M},\mathcal{E})$ and $T>0$ depending on $(r_0, \mathcal{E}, \bar{M}, \kappa |g|)$ such that for all $t\in[0,T]$ and $N\geq N^*$
$$
d_{\min}^N(t) \geq \frac{1}{2} d_{\min}^N(0),
$$
$$
{M^N(t)} \leq 8^4 {M^N(0)}.
$$
\end{theorem}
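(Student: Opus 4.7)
The plan is a bootstrap/continuation argument. Fix $T > 0$ to be determined only by $(r_0, \bar{M}, \mathcal{E}, \kappa|g|)$ and define $T^\star$ as the supremum of times $s \leq T$ on which the two conclusions $d_{\min}^N(t) \geq \tfrac12 d_{\min}^N(0)$ and $M^N(t) \leq 8^4 M^N(0)$ hold for all $t \in [0,s]$. By continuity $T^\star > 0$, and the target is to exclude the case $T^\star < T$. Throughout $[0,T^\star]$ the configuration $X^N(t)$ still lies in a set of the type of Definition \ref{def_regime}, but with $\bar{M}$ replaced by $8^4 \bar{M}$ and $\mathcal{E}$ by $4\mathcal{E}$; consequently every estimate below is computed with these enlarged constants and the smallness assumption on $r_0 \bar{M}^{1/3}$ must be read as smallness of $r_0(8^4 \bar{M})^{1/3}$, which is the same hypothesis up to the harmless factor $8^{4/3}$.

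The heart of the proof is a Lipschitz-in-space bound on the relative velocities,
\begin{equation*}
|V_i(t) - V_j(t)| \leq C(r_0,\bar{M},\mathcal{E},\kappa|g|)\, |x_i(t) - x_j(t)| \qquad \forall\, i\neq j,\ t\in [0,T^\star].
\end{equation*}
To obtain it I would apply the method of reflections (see \cite{Luke}, \cite{JO}, \cite{Hofer}) to write
\begin{equation*}
V_i \;=\; \kappa g \;+\; \frac{mg}{8\pi}\sum_{k \neq i}\Phi(x_i - x_k) \;+\; \mathcal{R}_i,
\end{equation*}
where $\Phi$ is the Oseen tensor and $\mathcal{R}_i$ collects the higher-order reflections, subordinate thanks to the smallness of $r_0 \bar{M}^{1/3}$ which makes the reflection series converge. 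Because $F_k = mg$ is the same for all particles and $\Phi$ is even in its argument, both the leading term $\kappa g$ and the diagonal pair $\Phi(x_i-x_j)-\Phi(x_j-x_i)$ cancel in $V_i - V_j$, leaving
\begin{equation*}
V_i - V_j \;=\; \frac{mg}{8\pi}\sum_{k \notin\{i,j\}}\bigl[\Phi(x_i - x_k) - \Phi(x_j - x_k)\bigr] + (\mathcal{R}_i - \mathcal{R}_j).
\end{equation*}
I then split the sum according to whether $k$ is \emph{far} ($|x_i - x_k| \geq 2|x_i-x_j|$) or \emph{near}. On the far piece the mean value inequality together with $|\nabla \Phi(r)|\lesssim 1/|r|^2$ produces a factor $|x_i - x_j|$ times $\sum_k 1/|x_i - x_k|^2$; on the near piece the cruder $|\Phi| \lesssim 1/|r|$ is combined with the fact that very few particles lie within $2|x_i-x_j|$ of $x_i$. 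Both sums are evaluated by dyadic spherical shells around $x_i$: shells of radius $\geq \lambda^N$ are counted via \eqref{bound_concentration}, while the interior cap (at most $M^N$ particles, each at distance $\geq d_{\min}^N$) is controlled by \eqref{hyp1}, the bound $|\lambda^N|^3 \leq \mathcal{E}(d_{\min}^N)^2$ being exactly what tames the inner-scale singularity. The prefactor $mg \sim R = r_0/N$ matches the $O(N)$ shell sum and yields the stated Lipschitz constant.

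Once the Lipschitz bound is in hand, a standard Gronwall estimate on $d_{ij}(t) := |x_i(t) - x_j(t)|$ gives $d_{ij}(0) e^{-Ct} \leq d_{ij}(t) \leq d_{ij}(0) e^{Ct}$ on $[0,T^\star]$ for every pair $(i,j)$. Choosing $T \leq (\log 2)/C$ activates both conclusions simultaneously: the lower inequality applied to the minimising pair yields $d_{\min}^N(t) \geq d_{\min}^N(0)/2$, while the upper inequality propagates the initial concentration via a covering argument --- two particles sharing the cube $B_\infty(x,\lambda^N)$ at time $t$ come from initial positions in some cube $B_\infty(x_i(0), c\lambda^N)$ with $c$ universal, and this larger cube is tiled by a bounded number of cubes of $\ell^\infty$-radius $\lambda^N$, on each of which the initial bound $M^N(0)$ applies. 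Absorbing all geometric factors into a single constant yields $M^N(t) \leq 8^4 M^N(0)$. Both improved bounds at $t = T^\star$ are then strictly stronger than the threshold, contradicting the maximality of $T^\star$ if $T^\star < T$; hence $T^\star = T$.

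The main obstacle is the Lipschitz estimate on $V_i - V_j$ in the regime considered here, where $d_{\min}^N$ is only required to be $\gtrsim N^{-1/2}$, well below the $N^{-1/3}$ regime of \cite{Hofer}. In this regime the naive dyadic bound on $\sum_k 1/|x_i - x_k|^2$ is only borderline summable, and it is the simultaneous use of \eqref{bound_concentration} (bulk concentration) and \eqref{hyp1} (interior-scale balance) that makes the far-field and near-field contributions $O(N)$ uniformly in $N$. The smallness of $r_0 \bar{M}^{1/3}$ is what keeps the reflection remainder $\mathcal{R}_i - \mathcal{R}_j$ of lower order, so the bootstrap closes even against the inflated constants $(8^4 \bar{M}, 4\mathcal{E})$.
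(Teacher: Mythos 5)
Your proposal follows essentially the same route as the paper: a continuation argument driven by the reflection-based expansion $V_i=\kappa g+ 6\pi R\sum_{k\neq i}\Phi(x_i-x_k)\kappa g+O(d_{\min}^N)$ (note $6\pi R\,\kappa g=mg$, so your prefactor $mg/8\pi$ double-counts the $1/8\pi$ already inside $\Phi$), the resulting Lipschitz bound $|V_i-V_j|\lesssim d_{ij}$ (the paper gets it from the inequality \eqref{lipschitz} together with Lemma \ref{JO}, in place of your near/far splitting, whose ``few particles nearby'' claim is only accurate for small $d_{ij}$), a Gronwall estimate on pairwise distances, and the same $8^4$ covering bookkeeping for $M^N$. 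Two points need tightening: the reflection remainder must be quantified as $O(d_{\min}^N)$ so that $|\mathcal{R}_i-\mathcal{R}_j|\lesssim d_{ij}$ even for nearest pairs — this is exactly the paper's Proposition \ref{velocity_formula}, which uses the stresslet/rotation estimates and $d_{\min}^N\gtrsim N^{-1/2}$, and does not follow from smallness of $r_0\bar{M}^{1/3}$ (series convergence) alone — and, since your covering yields exactly $M^N(t)\leq 8^4M^N(0)$ rather than a strict improvement and $M^N$ is integer-valued (not continuous), the continuation is cleaner if the bootstrap condition is the pairwise bound $d_{ij}(t)\geq\frac12 d_{ij}(0)$, as in the paper, with the concentration estimate then deduced on the whole interval as a consequence.
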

The second part of the result is the justification of the convergence of $\rho^N$ when $N$ tends to infinity.
\begin{theorem}\label{thm2}
Consider the maximal time $T>0$ introduced in Theorem \ref{thm1} and the additional assumption \eqref{hyp5}. Let $\rho_0$ be a positive regular density such that $\int_{\mathbb{R}^3}\rho_0=1$. We denote by $(\rho,u)$ the unique solution to the coupled equation \eqref{mesoscopic_rho}.\\
There exists some positive constants $C_1,C_2$ depending on $( r_0,\bar{M},\mathcal{E}, \|\rho_0\|_{L^\infty}, \kappa |g|)$ and $N^* \in \mathbb{N}^*$ depending on $( r_0, \bar{M}, \mathcal{E}, \|\rho_0\|_{L^\infty}, \kappa |g|, T)$  such that for all $N\geq N^*$ and $t\in[0,T]$
$$
W_1(\rho^N(t,\cdot),\rho(t,\cdot)) \leq C_1 \left (\lambda^N +  d_{\min}^N(0) \,t+ W_1(\rho_0,\rho_0^N)  \right)e^{C_2t}.$$
\end{theorem}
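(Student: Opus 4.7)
The plan is to follow a Dobrushin-style coupling argument adapted to the singular Oseen kernel, along the lines of \cite{Hauray} and \cite{HJ}. Let $\Psi(t,\cdot)$ denote the characteristic flow of the limit velocity field $u$, so that $\rho(t,\cdot) = \Psi(t,\cdot)_{\#}\rho_0$. A natural triangle inequality gives
$$W_1(\rho^N(t),\rho(t)) \leq W_1\bigl(\Psi(t)_{\#}\rho_0^N,\Psi(t)_{\#}\rho_0\bigr) + W_1\bigl(\rho^N(t),\Psi(t)_{\#}\rho_0^N\bigr).$$
The first term is bounded by $\mathrm{Lip}(\Psi(t))\, W_1(\rho_0,\rho_0^N)$, and $\mathrm{Lip}(\Psi(t))$ grows like $e^{\|\nabla u\|_\infty t}$, where $\|\nabla u\|_\infty$ is controlled by Stokes regularity applied to \eqref{mesoscopic_rho} with bounded source $\rho$. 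The second term reduces, via an explicit coupling between $\rho^N(t)$ and $\Psi(t)_{\#}\rho_0^N = \frac{1}{N}\sum_i \delta_{\Psi(t,x_i(0))}$, to estimating the averaged deviation $\frac{1}{N}\sum_i |x_i(t) - \Psi(t,x_i(0))|$.

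For each $i$, differentiation in time yields
$$\frac{d}{dt}\bigl(x_i(t) - \Psi(t,x_i(0))\bigr) = \bigl(V_i(t) - u(t,x_i(t))\bigr) + \bigl(u(t,x_i(t)) - u(t,\Psi(t,x_i(0)))\bigr).$$
The second term is Lipschitz in the deviation, so by Grönwall everything reduces to the \emph{consistency estimate}
$$\sup_{1 \leq i \leq N} \bigl|V_i(t) - u(t,x_i(t))\bigr| \leq C\bigl(\lambda^N + d_{\min}^N(0)\bigr),$$
uniformly on $[0,T]$. To obtain this, I would invoke the method of reflections (already central to Theorem \ref{thm1}) to decompose
$$V_i(t) = \kappa g + \sum_{j \neq i} \kappa\, \Phi(x_i - x_j)\, g + \mathcal{R}_i,$$
where $\Phi$ is the Oseen tensor and the remainder $\mathcal{R}_i$ is controlled by the smallness of $\bar{M}^{1/3} r_0$, the bounds \eqref{bound_concentration}--\eqref{hyp1}, and $R \sim 1/N$. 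This has to be compared to $u(t,x_i(t)) = \kappa\,(\Phi \ast \rho(t))(x_i(t))\, g$ coming from \eqref{mesoscopic_rho}.

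The heart of the argument is therefore to quantify the discrepancy between the empirical sum $\frac{1}{N}\sum_{j\neq i}\Phi(x_i-x_j)$ and the convolution $(\Phi\ast\rho)(x_i)$. Because $\Phi$ is singular at the origin, one truncates at scale $\lambda^N$: the contribution from neighbors within distance $\lambda^N$ is controlled by \eqref{bound_concentration} together with the initial assumption \eqref{hyp5}, while for the truncated kernel $\Phi_{\lambda^N}$ one uses Kantorovich duality against the Lipschitz bound $\mathrm{Lip}(\Phi_{\lambda^N}) \lesssim (\lambda^N)^{-2}$. The main obstacle is the careful bookkeeping of this singular quadrature: one has to simultaneously absorb the Lipschitz blow-up of $\Phi_{\lambda^N}$ as $\lambda^N \to 0$, the local concentration of the empirical measure near each $x_i$, and the residual error in replacing $\Phi \ast \rho^N$ by $\Phi \ast \rho$. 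This is precisely where the extra assumption \eqref{hyp5} becomes indispensable, as it forces the local-quadrature error to be negligible.

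Combining the consistency estimate with the flow comparison above leads to an inequality of the form
$$W_1(\rho^N(t),\rho(t)) \leq C_1\bigl(\lambda^N + d_{\min}^N(0)\,t + W_1(\rho_0,\rho_0^N)\bigr) + C_2\int_0^t W_1(\rho^N(s),\rho(s))\, ds,$$
and Grönwall's lemma yields the stated bound. The term $d_{\min}^N(0)\,t$ reflects the accumulated error from the method-of-reflections remainder $\mathcal{R}_i$, and the fact that the particle configuration stays in $\mathcal{X}(\bar{M},\mathcal{E})$ throughout $[0,T]$ is exactly what Theorem \ref{thm1} provides, so that all the structural estimates used along the way remain valid up to time $T$.
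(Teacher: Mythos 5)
Your overall architecture (couple the particle trajectories to a limiting flow, reduce to a consistency estimate, close with Gr\"onwall) is reasonable in spirit, but the central step as you state it does not go through, and this is precisely the difficulty the paper is organized around. You claim a uniform consistency bound $\sup_i |V_i(t)-u(t,x_i(t))|\leq C(\lambda^N+d_{\min}^N(0))$ and propose to prove the quadrature part of it by truncating $\Phi$ at scale $\lambda^N$ and invoking Kantorovich duality with $\mathrm{Lip}(\Phi_{\lambda^N})\lesssim (\lambda^N)^{-2}$. That duality step does not produce a term of size $\lambda^N+d_{\min}^N(0)$: it produces a term of size $(\lambda^N)^{-2}\,W_1(\rho^N(t),\rho(t))$, i.e.\ the error feeds back into the quantity you are estimating with an $N$-dependent prefactor. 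Plugging it into your final inequality makes $C_2\sim(\lambda^N)^{-2}\to\infty$, so the Gr\"onwall exponential is not uniform in $N$ and the stated conclusion (with $C_1,C_2$ independent of $N$) is lost. The consistency estimate cannot be established as a standalone statement: the discrepancy between $\frac1N\sum_{j\neq i}\Phi(x_i-x_j)$ and $(\Phi\ast\rho(t))(x_i)$ at positive times is itself of the order of the (coupled) distance between the empirical and limiting densities, so the argument is circular unless the singular kernel is integrated against a \emph{bounded} density with an $N$-independent constant.

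This is why the paper does not compare $\rho^N$ directly with $\rho$ along the limit flow. It introduces the intermediate solution $\bar\rho^N$ of \eqref{mesoscopic_rho} with mollified initial data $\rho_0^N\ast\chi_{\lambda^N}$ (uniformly bounded in $L^1\cap L^\infty$ by \eqref{bound_concentration}), compares $\bar\rho^N$ with $\rho$ via the Hauray--Loeper stability Theorem \ref{stability_Loeper}, and compares $\rho^N$ with $\bar\rho^N$ through a $W_\infty$ transport-map coupling $T_t=X^N(t,0,\cdot)\circ T_0\circ X(0,t,\cdot)$. In that coupling the kernel difference is estimated pointwise by the Lipschitz-like bound \eqref{lipschitz}, and the resulting singularity $|x-y|^{-2}$ is integrated against the bounded density $\bar\rho^N$, which is what yields an $N$-independent Gr\"onwall constant; the near-field is handled separately using the regularized kernel $\psi^N\Phi$, the minimal-distance conservation of Theorem \ref{thm1}, and a bootstrap on $f^N+|f^N|^2/d_{\min}^N(0)$ for which assumption \eqref{hyp5} is exactly what is needed. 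Moreover the term $d_{\min}^N(0)\,t$ arises not from a uniform consistency bound but from the explicitly constructed divergence-free corrector $E^N$ with $\|E^N\|_\infty\lesssim d_{\min}^N$ in the weak transport formulation of $\rho^N$ (Theorem \ref{thm_VlasovN}). Your first triangle-inequality term (Lipschitz flow estimate for the initial-data error) is fine, but to repair the second you would have to replace the $W_1$-duality step by a $W_\infty$-coupling against a mollified, uniformly bounded density and add the bootstrap controlling the near-field singular contributions.
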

This shows that if the initial particle distribution $\rho_0^N$ converges to $\rho_0$ then the particle distribution $\rho^N$ converges toward the unique solution $\rho$ of equation \eqref{mesoscopic_rho} for all time $0\leq t \leq T$. Moreover, Theorem \ref{thm2} provides a quantitative convergence rate in terms of the initial Wasserstein distance $W_1(\rho_0,\rho_0^N)$.
\begin{remark}
The regularity assumption on the initial density $\rho_0$ is the one introduced by H\"ofer in \cite{Hofer} which is $ \rho_0$, $\nabla \rho_0 \in X_\beta$, for some $\beta>2 $. See Section \ref{Vlasov_existence} for the definition of $X_\beta$. In particular, the assumption is satisfied if $\rho_0$ is compactly supported and $\mathcal{C}^1$.
\end{remark}
The idea of proof of Theorem \ref{thm2} is to formulate the problem considered as a mean-field problem. The mean-field theory consists in approaching equations of motion of large particles systems $(X_1,\cdots,X_N)$ when the number of particles $N$ tends to infinity. In mean-field theory, the ODE governing the particle motion is known and is given by
\begin{equation}\label{system1}
\left\{
\begin{array}{rcl}
\dot{X}_i & =& \frac{1}{N} \underset{i=1}{\overset{N}{\sum}} F(X_i-X_j), \\
X_i(0) &=& X_i^0,
\end{array}
\right.
\end{equation}
where the kernel $F$ is the interaction force of the particles. The limit model describing the time evolution for the spatial density $\rho(t,x)$ is given by
\begin{equation}\label{mean_field}
\left\{
\begin{array}{c}
\partial_t \rho+\mathcal{K}\rho \cdot \nabla \rho = 0\,, \, \\\\
\mathcal{K}\rho(x):= \int_{\mathbb{R}^3} F(x-y) \rho(t,y) dy, \
\end{array}
\right.
\end{equation}
In our case, the first difficulty is to extract a system similar to \eqref{system1} for the particle motion and to identify the interaction force $F$. A key step is then a sharp expansion of the velocities for large $N$. We obtain for each $ 1 \leq i \leq N$
\begin{equation}\label{vitesse_V_i}
V_i = \kappa g+6 \pi \frac{r_0}{N} \, \underset{j\neq i}{\sum} \Phi(x_i-x_j)\kappa g  + O\left (d_{\min}^N \right) \,,\:\:\,\:\: 1 \leq i \leq N, 
\end{equation}
where $\Phi$ is the Green's function for the Stokes equations, also called the Oseen tensor (see formula \eqref{Oseen} for a definition). $\kappa g$ is the fall speed of a sedimenting single particle under gravitational force and is of order one in our model, see Remark \ref{rem_kappa}. This shows that the particle system satisfies \emph{approximately} equation \eqref{system1} with an interaction force given by the Oseen tensor. Since the convolution term $\mathcal{K} \rho$ appearing in \eqref{mean_field} corresponds to the solution of a Stokes equation in our case, the limiting model describing \eqref{eq_stokes}, \eqref{cab_stokes}, \eqref{ode} is a coupled transport-Stokes equation
\begin{equation}\label{mesoscopic_rho}
\left\{
\begin{array}{rcl}
\frac{\partial \rho}{\partial t }+{\div}( ( \kappa g + u) \rho ) &=& 0\,, \\
-\Delta u +\nabla p &=& 6\pi r_0 \kappa \rho g\,,\\
\div(u) &=& 0\,,\\
\rho(0,\cdot) &= & \rho_0 \,,
\end{array}
\right.
\end{equation}
The proof of Theorem \ref{thm2} is based on the two papers \cite{HJ}, \cite{Hauray} where, in the first one, the authors justify the mean field approximation and prove the propagation of chaos for a system of particles interacting with a singular interaction force and where the ODE governing the particle motion is second order. In \cite{Hauray} the author considers a different mean-field equation where the particle dynamics is a first order ODE. The results obtained hold true for a family of singular kernels and applies to the case of vortex system converging towards equations similar to the 2D Euler equation in vorticity formulation. The associated kernel in this case is the Biot-Savard kernel.\\
In order to extract the first order terms for the velocities $(V_i,\Omega_i)$ we apply the method of reflections. This method is introduced by Smoluchowski \cite{Smo} in 1911. The main idea is to express the solution $u^N$ of $N$ separated particles as superposition of fields produced by the isolated $N$ particle solutions. We refer to \cite[Chapter 8]{KK} and \cite[Section 4]{Guazzelli&Morris} for an introduction to the method. A convergence proof based on orthogonal projection operators is introduced by Luke \cite{Luke} in 1989. We refer also to the method of reflections developped in \cite{HV} which is used by H{\"o}fer in \cite{Hofer}.\\
In this paper, we design a modified method of reflections that takes into account the particle rotation and relies on explicit solutions of Stokes flow generated by a translating, rotating and straining sphere. To obtain the convergence of the method of reflections we need to identify particle configuration that can be propagated in time. The particle configuration considered herein is the one introduced in \cite{Hillairet} to study the homogenization of the Stokes problem in perforated domains. The novelty is that the author considers the minimal distance $d_{\min}^N$ together with the particle concentration $M^N$ as parameters to describe the cloud. The result in \cite{Hillairet} extends in particular the validity of the homogenization problem for randomly distributed particles \textit{i.e.} particle configurations having a minimal distance of order at least $N^{-2/3}$. Note that the notion of particle concentration appears also in \cite{HJ} to describe the cloud.
\subsection{Discussion about the particle configuration set}
As stated above, the assumptions introduced in Definition \ref{def_regime} are based on \cite{Hillairet}. Assumptions \eqref{bound_concentration} and \eqref{compatibility} means that there exists a uniformly bounded discrete spatial density that approximates $\rho^N$. Indeed, if we define $\tilde{\rho}^N$ by
\begin{equation}
\tilde{\rho}^N(t,x):= \frac{1}{N} \underset{i=1}{\overset{N}{\sum}} \frac{1_{B(x_i,\lambda^N)}}{\left|B(x_i,\lambda^N) \right|}\,,
\end{equation} 
one can show that 
$$
W_1(\tilde{\rho}^N, \rho^N) \leq  \lambda^N.
$$
Assumption \eqref{bound_concentration} ensures that there exists a sequence $\lambda^N$ for which the infinite norm of $\tilde{\rho}^N$ is bounded by $\bar{M}$, see formula \eqref{bound_tildethoN}. This suggests that $\|\rho\|_\infty$ and $\bar{M}$ are equivalent. \\
We recover the result of \cite{JO} in the case where $\lambda^N=N^{-1/3}$ and the minimal distance $d_{\min}^N$ is much larger than $N^{-1/3}$, the explicit formula for the velocities \eqref{vitesse_V_i} implies in this case 
$$
|V_i-\kappa g | \lesssim  \frac{6\pi r_0}{N} \underset{j\neq i}{\sum} \frac{1}{|x_i-x_j|} |\kappa g| \lesssim \frac{1}{N} \frac{N^{2/3}}{d_{\min}^N} \ll 1\,,
$$
which is in accordance with the ``non-interacting scenario'' explained in \cite{JO}. In our case, the smallness assumption on $r_0 \bar{M}^{1/3}$ means that we consider a density of particles such that $\|\rho\|_{\infty}$ is small but of order one. Indeed, the second term in the velocity formula \eqref{vitesse_V_i} can be seen as a perturbation of order one of the velocity fall $\kappa g $ in the case where $\bar{M}$ (or the particle density $\|\rho\|_\infty$) is small. This can be also seen in the coupled equation \eqref{mesoscopic_rho} where the velocity term $u$ is proportional to $\|\rho\|_\infty$.\\
The second assumption \eqref{hyp1} ensures the conservation of the minimal distance, see Proposition \ref{prop1}. In particular, for $\lambda^N=N^{-1/3}$, Theorem \ref{thm1} extends the previous known results to configurations having minimal distance at least of order $N^{-1/2}$, see assumption \eqref{hyp1}.
This lower bound for the minimal distance appears naturally in our analysis and is closely related to the properties of the Green's function for the Stokes equations. 
We emphasize that this critical minimal distance appears also in the mean-field analysis due to \cite{Hauray}.
Precisely, computations in the proof of \cite[Theorem 2.1]{Hauray} show the convergence for a short time interval under the assumption that $$\frac{W_\infty(\rho_0,\rho_0^N)^3}{|d_{\min}^N(0)|^2}, $$ is uniformly bounded, see Definition \ref{def_Wasserstein} for the definition of the infinite Wasserstein distance $W_\infty$. Standard measure-theory arguments show that the infinite Wasserstein distance ensures assumption \eqref{bound_concentration}. In other words, one can take $\lambda^N$ to be the infinite Wasserstein distance, which yields finally the same assumption \eqref{hyp1}. \\
The first assumption in formula \eqref{compatibility} means that we are interested in cases where there is more than one particle per cube of length $\lambda^N$. As pointed out by Hillairet in \cite{Hillairet}, one can choose a larger sequence $(\lambda^N)_{N\in\mathbb{N}^*}$ such that the compatibility assumption holds true. Note also that, in the case where $\lambda^N$ is the infinite Wasserstein distance, this compatibility assumption is satisfied by definition.\\ Finally, assumption \eqref{hyp5} is needed for the control of the Wasserstein distance.
\subsection{Outline of the paper and main notations}  
The remaining Sections of this paper are organized as follows.\\
In Section 2 we recall the classical results for the existence and uniqueness of the Stokes solution $u^N$. We recall also the definition of the drag force $F_i$, torque $T_i$ and stresslet $S_i$ and present in Section 2.1 the particular solutions to a Stokes flow generated by a translating, a rotating or a straining sphere. Finally, the end of Section 2 is devoted to the approximation of the stresslets $S_i$. In Section 3 we present and prove the convergence of the method of reflections in order to compute the first order terms for the velocities $(V_i,\Omega_i)_{1 \leq i \leq N}$. Section 4 is devoted to the proof of Theorem \ref{thm1}. In Section 5 we recall some definitions associated to the Wasserstein distance. We present then the strong existence, uniqueness and stability theory for equation \eqref{mean_field}. In the second part of Section 5 we show that the discrete density $\rho^N$ satisfies weakly a transport equation. Section 6 is devoted to the proof of the second Theorem \ref{thm2}. Finally, some technical Lemmas are presented in the appendix.
\begin{notation}
In this paper, $n$ always refer to the unit outer normal to a surface.\\
The following shortcut will be often used 
$$
d_{ij}= |x_i-x_j| \,, 1 \leq i \neq j \leq N \,,
$$
where we drop the dependence with respect to $N$ in order to simplify the notation.\\ 
Given an exterior domain $\Omega$ with smooth boundaries, we set
 $$
 \mathcal{C}^\infty(\overline{\Omega}):= \{v_{| \Omega}\,,\, v \in \mathcal{C}^\infty_c(\mathbb{R}^3) \}
, $$
 and the following norm for all $u\in  \mathcal{C}^\infty(\overline{\Omega})$ 
 $$
 \|u\|_{1,2} := \|\nabla u \|_{L^2(\Omega)},
 $$
we define then the homogeneous Sobolev space $D(\Omega)$ as the closure of $ \mathcal{C}^\infty(\overline{\Omega})$ for the norm $\| \cdot\|_{1,2}$ (see \cite[Theorem II.7.2]{Galdi}).
 We also use the notation $D_\sigma(\Omega)$ for the subset of divergence-free $D(\Omega)$ fields  
$$
D_\sigma(\Omega):= \{ u \in D(\Omega)\,,\, \div u=0 \}.
$$
Which is also the closure of the subset of divergence-free $ \mathcal{C}^\infty(\overline{\Omega})$ fields for the $\| \cdot \|_{1,2}$ norm.
Analogously, if $\Omega=\mathbb{R}^3$ we use the notation $$\dot{H}^1_\sigma(\mathbb{R}^3)= D_\sigma(\mathbb{R}^3).$$
For all $3 \times3$ matrix $M,$ we define $\sym(M)$ (resp. $\asym(M)$) as the symmetric part of $M$ (resp. the skew-symmetric part of $M$)  
\begin{eqnarray*}
\sym(M)= \frac{1}{2}( M + M^{ \top})\,,\,& \asym(M)= \frac{1}{2}( M - M^{\top}) .
\end{eqnarray*}
We denote by $\times$ the cross product on $\mathbb{R}^3$ and by $\otimes$ the tensor product on $\mathbb{R}^3$ which associates to each couple $(u,v) \in \mathbb{R}^3 \times \mathbb{R}^3$ the $3\times3 $ matrix defined as  
$$
(u\otimes v)_{ij} = u_i v_j \,,\: 1\leq i,j\leq 3.
$$
For all $3 \times 3$ matrices $A,B$, we use the classical notation
$$
A:B = \underset{i=1}{\overset{3}{\sum}}\underset{j=1}{\overset{3}{\sum}} A_{ij} B_{ij}.
$$
In $\mathbb{R}^3$, $|\cdot|$ stands for the Euclidean norm while $|\cdot|_\infty$ represents the $l^\infty$ norm. We use the notation $B_\infty(x,r)$ for the ball with center $x$ and radius $r$ for the $l^\infty$ norm.\\
Finally, in the whole paper we use the symbol $\lesssim$ to express an inequality with a multiplicative constant independent of $N$ and depending only on $r_0$, $\bar{M}$, $\mathcal{E}$ and eventually on $\kappa |g|$ which is uniformly bounded according to Remark \ref{rem_kappa}.
\end{notation}
 \section{Reminder on the Stokes problem}
In this Section we recall some results concerning the Stokes equations.
We remind that for all $N \in \mathbb{N}$  we denote by $(u^N,p^N)$ the solution to \eqref{eq_stokes} -- \eqref{cab_stokes}. Keeping in mind that the linear mapping, that associates to the linear and angular velocities the forces and torques, is bijective (see \cite{Luke}) together with the classical theory for the Stokes equations yields:
\begin{proposition}
For all $N \in \mathbb{N}$, there exists a unique pair $(u^N,p^N) \in {D}_\sigma(\mathbb{R}^3 \setminus\underset{i}{\bigcup}  \overline{B_i})\times L^2(\mathbb{R}^3 \setminus  \underset{i}{\bigcup} \overline{B_i} )$ and unique velocities $(V_i,\Omega_i)_{1 \leq i \leq N}$ such that
\begin{eqnarray*}
\int_{\partial B_i} \sigma(u^N,p^N) n + mg &=&0\,, \forall \, 1\leq i \leq N\,,\\
\int_{\partial B_i} (x-x_i) \times [\sigma(u^N,p^N) n] &=&0 \,,  \forall \,1 \leq i\leq N \,,
\end{eqnarray*}
and $u$ realizes
\begin{multline}\label{variational_form}
\inf \Bigg \{ \int_{\mathbb{R}^3 \setminus  \underset{i}{\bigcup} \overline{B_i} }|\nabla v |^2,\\ v \in {D}_\sigma(\mathbb{R}^3 \setminus \underset{i}{\bigcup}\overline{B_i}) \,,\, v= V_i+\Omega_i\times (x-x_i) \text{ on } \partial B_i \,,\, 1 \leq i \leq N  \Bigg \}.
\end{multline}
\end{proposition}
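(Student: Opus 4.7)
The plan is to decouple the problem into first solving for the unknown rigid-body velocities, then solving a classical Stokes boundary value problem with prescribed data, and finally recognizing the resulting field as the unique minimizer of the Dirichlet energy.

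First, I would invoke the result of Luke already recalled in the paper: the linear map
$$\mathcal{L}\colon (V_i,\Omega_i)_{1\le i\le N}\in\mathbb{R}^{6N}\longmapsto (F_i,T_i)_{1\le i\le N}\in\mathbb{R}^{6N}$$
is a bijection. Setting the right-hand side to the prescribed values $F_i=-mg$ and $T_i=0$ therefore determines $(V_i,\Omega_i)_{1\le i\le N}$ uniquely. Here one uses that $\mathcal{L}$ is built by composing the data map $(V_i,\Omega_i)\mapsto$ (boundary rigid motion) with the classical Stokes solution operator and then with the surface integral defining $(F_i,T_i)$; the bijectivity is a consequence of the positivity of the associated quadratic form $(V_i,\Omega_i)\cdot\mathcal{L}(V_i,\Omega_i)=\int|\nabla u|^2$, an identity that I would derive in Step~3 below and that immediately yields injectivity, hence bijectivity in finite dimension.

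Next, with $(V_i,\Omega_i)$ now fixed, I would apply the classical theory of the stationary Stokes system in an exterior Lipschitz domain (see Galdi, Theorem II.7.2 and the chapters on the Stokes problem): the boundary datum $x\mapsto V_i+\Omega_i\times(x-x_i)$ on $\partial B_i$ is smooth and, being a rigid motion, satisfies the compatibility condition $\int_{\partial B_i}(V_i+\Omega_i\times(x-x_i))\cdot n=0$ needed to lift it as a divergence-free field in $\mathbb{R}^3\setminus\bigcup_i\overline{B_i}$. This produces a unique $(u^N,p^N)\in D_\sigma(\mathbb{R}^3\setminus\bigcup_i\overline{B_i})\times L^2(\mathbb{R}^3\setminus\bigcup_i\overline{B_i})$ solving \eqref{eq_stokes}--\eqref{cab_stokes}; uniqueness in $D_\sigma$ uses the fact that any homogeneous $D_\sigma$ solution decays at infinity and, by the energy identity, has vanishing gradient, hence is identically zero after the boundary conditions are used.

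Finally, for the variational characterization, I would fix $u^N$ as above and pick any competitor $v\in D_\sigma$ with $v=V_i+\Omega_i\times(x-x_i)$ on $\partial B_i$. Writing $w=v-u^N\in D_\sigma$ with $w|_{\partial B_i}=0$, I would expand
$$\int|\nabla v|^2=\int|\nabla u^N|^2+2\int\nabla u^N:\nabla w+\int|\nabla w|^2.$$
The cross term is treated by integration by parts against the Stokes equations: since $\div w=0$ the pressure contribution drops out, and the boundary terms on each $\partial B_i$ read
$$-\int_{\partial B_i}\sigma(u^N,p^N)n\cdot w=-F_i\cdot V_i^{(w)}-T_i\cdot\Omega_i^{(w)}=0,$$
because $w$ vanishes on $\partial B_i$. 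Decay at infinity (justified by the $D_\sigma$-membership and the standard Stokes far-field estimates) kills the boundary contribution at infinity. The cross term therefore vanishes and $\int|\nabla v|^2\ge\int|\nabla u^N|^2$, with equality iff $\nabla w\equiv 0$, i.e.\ $v=u^N$.

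The only delicate technical point is the justification of the integration by parts at infinity and the existence of a suitable divergence-free lift of the rigid-motion boundary data, but both are standard in the $D_\sigma$ framework for exterior Stokes problems; everything else is a direct consequence of Luke's bijection and the classical Dirichlet principle.
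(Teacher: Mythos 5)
Your proposal follows essentially the same route as the paper, which simply invokes Luke's bijectivity of the map $(V_i,\Omega_i)_{1\leq i\leq N}\mapsto(F_i,T_i)_{1\leq i\leq N}$ together with classical exterior Stokes theory and the Dirichlet principle; your write-up merely spells out these standard steps in detail. The only minor slip is the sign of the quadratic form: with the paper's conventions (e.g.\ $F=-6\pi R V$ for a single translating sphere) one has $\sum_i\left(V_i\cdot F_i+\Omega_i\cdot T_i\right)=-2\int |D(u)|^2\leq 0$, so the form is negative definite rather than positive, which of course still gives injectivity and hence bijectivity in finite dimension.
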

The velocity field $u^N$ can be extended to $V_i+ \Omega_i \times (x-x_i)$ on each particle $B_i$. This extension denoted also $u^N$ is in $\dot{H}^1_\sigma(\mathbb{R}^3)$. \\
We recall the definition of the force $F_i\in \mathbb{R}^3$, torque $T_i \in \mathbb{R}^3$ and stresslet $S_i \in \mathcal{M}_3(\mathbb{R})$ applied by the particle $B_i$ on the fluid (see \cite[Section 1.3]{Guazzelli&Morris})
\begin{align}\label{FM}
F_i & =  \int_{\partial B_i} \sigma (u^N,p^N) n. \nonumber \\
M_i & =   \int_{\partial B_i} (x-x_i) \otimes \left [ \sigma (u^N,p^N) n \right ].
\end{align}
The matrix $M_i$ represents the first momentum which is decomposed into a symmetric and skew-symmetric part $$M_i= T_i + S_i,$$
the symmetric part $S_i$ is called stresslet, see \cite[Section 2.2.3]{Guazzelli&Morris}. Since the skew-symmetric part of a $3 \times 3$ matrix M has only three independent components, it can be associated to a unique vector T such that 
$$
\asym(M)\, x = T \times x \,,\, \forall \, x \in \mathbb{R}^3 .
$$
In this paper, we allow the confusion between the skew-symmetric matrix $\asym(M)$ and the vector $T$. Hence, we define the torque $T_i \in \mathbb{R}^3$ as being the skew-symmetric part of the first momentum $M_i$ which satisfies 
\begin{align}\label{TS}
T_i & = \asym(M_i)=   \int_{\partial B_i} (x-x_i) \times \left [ \sigma (u^N,p^N) n \right ], \nonumber \\
S_i&=\sym(M_i).
\end{align}
\subsection{Particular Stokes solutions}\label{particular_Stokes}
The linearity of the Stokes problem allows us to develop powerful tools that will be used in the method of reflections. In particular, we investigate in what follows the analytical solution to a Stokes flow generated by a translating, a rotating or a straining sphere.
The motivation in considering these cases is that the fluid motion near a point $x_0$ may be approximated by
$$
u(x)\sim u(x_0) + \nabla u (x_0) \cdot(x-x_0),
$$
hence, if we replace the boundary condition on each particle by its Taylor series of order one, we can use these special solutions to approximate the flow $u$. The results and formulas of this Section are detailed in  \cite[Section 2]{Guazzelli&Morris} and \cite[Section 2.4.1]{KK}.
In what follows $B:=B(a,r)$ is a ball centered in $a \in \mathbb{R}^3$ with radius $r>0$.
\subsubsection{Case of translation} 
Let $V \in \mathbb{R}^3$. We consider $(U_{a,R}[V], P_{a,R}[V])$ the unique solution to the following Stokes problem: 
 \begin{equation}
\left \{
\begin{array}{rcl}
-\Delta U_{a,R}[V] + \nabla P_{a,R}[V]& = &0, \\ 
\div U_{a,R}[V] & = & 0,
\end{array}
\text { on $\mathbb{R}^3 \setminus \overline{B}$, } 
\right.
\end{equation}
completed by the boundary condition
\begin{equation}
\left \{
\begin{array}{rcl}
U_{a,R}[V] & = &  V, \text{ on $ \partial B$,} \\
\underset{|x| \to \infty}{\lim} |U_{a,R}[V](x)| & = & 0.
\end{array}
\right.
\end{equation}
$U_{a,R}[V]$ is the flow generated by a unique sphere immersed in a fluid moving at $V$. The explicit formula for $(U_{a,R}[V], P_{a,R}[V])$ is derived in \cite[Section 3.3.1]{KK} and also in \cite[Formula (2.12) and (2.13)]{Guazzelli&Morris}. Explicit formulas imply the existence of a constant $C>0$ such that for all $ x\in \mathbb{R}^3 \setminus {B(a,R)}$
\begin{eqnarray}\label{Stokeslet_maj}
|U_{a,R}[V](x)| \leq C  R \frac{|V|}{|x-a|},\,&
|\nabla U_{a,R}[V](x)|+ |P_{a,R}[V](x)| \leq C  R \frac{|V|}{|x-a|^2}.
\end{eqnarray}
\begin{equation}\label{Stokeslet_maj_2}
|\nabla^2 U_{a,R}[V](x)|\leq C R \frac{|V|}{|x-a|^3}.
\end{equation}
On the other hand, the force $F$, torque $T$ and stresslet $S$ exerted by a translating sphere $B$ as defined in \eqref{FM} read
\begin{eqnarray}\label{stokeslet_force}
F= -6 \pi R V \:,\: T =0 \:,\: S=0.
\end{eqnarray}
We recall now an important formula that links the solution to the Green's function of the Stokes problem. For all $x\in \mathbb{R}^3 \setminus B(a,R)$ we have
\begin{equation}\label{Oseen_Stokeslet}
U_{a,R}[V](x)= - \left(\Phi(x-a) - \frac{R^2}{6} \Delta \Phi(x-a)\right) F\,,
\end{equation}
where $\Phi$ is the Green's function for Stokes flow also called Oseen-tensor
\begin{equation}\label{Oseen}
\Phi(x) = \frac{1}{8\pi} \left (\frac{1}{|x|} \mathbb{I}_3 + \frac{1}{|x|^3} x\otimes x \right ).
\end{equation}
The $3\times 3 $ matrix $\Delta \Phi$ represents the Laplacian of $\Phi$ and is given by
$$
\Delta \Phi(x)= \frac{1}{8\pi} \left (\frac{2}{|x|^3} \mathbb{I}_3 - \frac{6}{|x|^5} x\otimes x \right).
$$
The first term in the right-hand side of \eqref{Oseen_Stokeslet} is the point force solution also called stokeslet, see \cite[Section 3.1]{Guazzelli&Morris}. In this paper we use the term stokeslet to define the whole solution $U_{a,r}[V]$ which can bee seen as an extension of the point force solution.
\begin{remark}
Formula \eqref{Oseen_Stokeslet} is closely related to the Fax\'en law which represents the relations between the force $F$ and the velocity $V$. We refer to \cite[Section 2.3]{Guazzelli&Morris} and \cite[Section 3.5]{KK} for more details on the topic.\\
Remark also that in \eqref{Oseen_Stokeslet} the point force solution retains the most slowly decaying portion, which is of order $\frac{R}{|x|}$. This property is useful in order to extract the first order terms for the velocities $(V_i)_{1 \leq i \leq N}$, see Lemma \ref{Oseen_Stokeslet_maj}. 
\end{remark}
Moreover, we recall a Lipschitz-like inequality satisfied by the Oseen tensor
\begin{equation}\label{lipschitz}
|\Phi(x)- \Phi(y)| \leq C \frac{|x-y|}{\min(|y|^2\,,\, |x|^2)},  \:\:\forall\, x \,,\, y \neq 0.
\end{equation}
Finally, in this paper, the velocity field $U_{a,R}[V]$ is extended by $V$ on $B(a,R)$. 
\subsubsection{Case of rotation}
Let $\omega \in \mathbb{R}^3$. Denote by $(A^{(1)}_{a,R}[\omega], P^{(1)}_{a,R}[\omega] ) $ the unique solution to
\begin{equation}
\left \{
\begin{array}{rcl}
-\Delta A^{(1)}_{a,R}[\omega] + \nabla P^{(1)}_{a,R}[\omega] & = &0, \\ 
\div A^{(1)}_{a,R}[\omega] & = & 0,
\end{array}
\text { on $\mathbb{R}^3 \setminus \overline{B(a,R)},$ } 
\right.
\end{equation}
completed with the boundary conditions
\begin{equation}
\left \{
\begin{array}{rcl}
A^{(1)}_{a,R}[\omega] & = &  \omega \times (x-a), \text{ on $ \partial B(a,R),$} \\
\underset{|x| \to \infty}{\lim} |A^{(1)}_{a,R}[\omega]| & = & 0 .
\end{array}
\right.
\end{equation}
 $A^{(1)}_{a,R}[\omega]$ represents the flow generated by a sphere rotating with angular velocity  $\omega$. In particular we have $P^{(1)}_{a,R}[\omega]=0$ due to symmetries. The drag force $F$ and stresslet $S$ also vanish 
\begin{eqnarray*}
F=0 \,,\,& S=0.
\end{eqnarray*}
On the other hand, the hydrodynamic torque resulting
from the fluid traction on the surface defined in \eqref{TS} is given by 
\begin{equation}\label{rotelet_torque}
T =- 8 \pi R^3 \omega.
\end{equation}
Finally, there exists $C>0$ such that for all $x \in \mathbb{R}^3 \setminus {B(a,R)}$
\begin{eqnarray*}
|A^{(1)}_{a,R}[\omega]| \leq C R^3 \frac{|\omega|}{|x-a|^2}
,\,& 
|\nabla A^{(1)}_{a,R}[\omega]|+ | P^{(1)}_{a,R}[\omega] | \leq C R^3 \frac{|\omega|}{|x-a|^3}.
\end{eqnarray*}
\subsubsection{Case of strain} 
Let $E$ be a trace-free $3\times 3$ symmetric matrix.\\
Denote by $(A^{(2)}_{a,R}[E], P^{(2)}_{a,R}[E])$ the unique solution to
\begin{equation}
\left \{
\begin{array}{rcl}
-\Delta A^{(2)}_{a,R}[E] + \nabla P^{(2)}_{a,R}[E] & = &0, \\ 
\div A^{(2)}_{a,R}[E] & = & 0,
\end{array}
\text { on $\mathbb{R}^3 \setminus \overline{B(a,R)},$ } 
\right.
\end{equation}
completed with the boundary conditions
\begin{equation}
\left \{
\begin{array}{rcl}
A^{(2)}_{a,R}[E] & = &  E (x-a), \text{ on $ \partial B(a,R),$} \\
\underset{|x| \to \infty}{\lim} |A^{(2)}_{a,R}[E]| & = & 0.
\end{array}
\right.
\end{equation}
The velocity field $A^{(2)}_{a,R}[E]$ is the flow generated by a sphere submitted to the strain $E (x-a)$. In this case, the drag force and torque vanishes
\begin{eqnarray}\label{stresslet_force_torque}
F=0 \,,\,& T=0.
\end{eqnarray}
On the other hand, the symmetric part of the first momentum $S$ as defined in \eqref{TS} is given by 
\begin{equation}\label{stresslet_strain}
S = - \frac{20}{3} \pi R^3 E.
\end{equation}
Finally, there exists $C>0$ such that for all $x \in \mathbb{R}^3 \setminus B(a,R) $ we have
\begin{eqnarray}
|A^{(2)}_{a,R}[E]| \leq C R^3 \frac{|E|}{|x|^2}\,,&
|\nabla A^{(2)}_{a,R}[E]| + |P^{(2)}_{a,R}[E] (x) |\leq C R^3 \frac{|E|}{|x|^3}.
\end{eqnarray}
\subsubsection{Final notations} 
Now, assume that $D$ is a trace-free $3 \times3$ matrix. We denote by $(A_{a,R}[D], P_{a,R}[D])$ the unique solution to 
\begin{equation}
\left \{
\begin{array}{rcl}
-\Delta A_{a,R}[D] + \nabla P_{a,R}[D] & = &0, \\ 
\div A_{a,R}[D] & = & 0,
\end{array}
\text { on $\mathbb{R}^3 \setminus \overline{B(a,R)}$, } 
\right.
\end{equation}
completed by the boundary conditions
\begin{equation} \label{superposition}
\left \{
\begin{array}{rcl}
A_{a,R}[D] & = &  D (x-a), \text{ on $ \partial B(a,R),$} \\
\underset{|x| \to \infty}{\lim} |A_{a,R}[D]| & = & 0 .
\end{array}
\right.
\end{equation}
We set then $D = E+\omega$ with $E= \sym(D)$ and $\omega=\asym(D)$. As stated in the definition \eqref{TS}, $\omega$ represents also a $3D$ vector.
Hence, the boundary condition \eqref{superposition} reads
$$
A_{a,R}[D](x)  =   D (x-a) = E (x-a) + \omega \times (x-a),  \:\: \text{ for all $x\in \partial B(a,R)$.}
$$
We have, thanks to the linearity of the Stokes equation, that 
 $$
 (A_{a,R}[D], P_{a,R}[D]) = (A^{(1)}_{a,R}[\omega], P^{(1)}_{a,R}[\omega])+ (A^{(2)}_{a,R}[E], P^{(2)}_{a,R}[E]).
 $$
Since the two solutions have the same decay-rate, this yields for all $x \in \mathbb{R}^3 \setminus {B(a,R)}$
\begin{eqnarray}\label{decay_rate}
|A_{a,R}[D]| \leq C R^3 \frac{|D|}{|x|^2},\,&
|\nabla A_{a,R}[D]| + |P_{a,R}[D] (x) |\leq C R^3 \frac{|D|}{|x|^3}.
\end{eqnarray}
Analogously, for the second derivative we have
\begin{equation}\label{decay_rate_2}
 |\nabla ^2 A_{a,R}[D](x)|\leq C  R^3 \frac{|D|}{|x-a|^4}.
\end{equation}
\subsection{Approximation result}
In this part we consider the unique solution $(v,p)$ of the following Stokes problem: 
\begin{equation}
\left \{
\begin{array}{rcl}
-\Delta v + \nabla p & = &0, \\ 
\div v & = & 0,
\end{array}
\text { on $\mathbb{R}^3 \setminus {\underset{i=1}{\overset{N}{\bigcup}} \overline{B_i}}$, } 
\right.
\end{equation}
completed with the boundary conditions
\begin{equation}
\left \{
\begin{array}{rcl}
v & = &  V+D(x-x_1), \text{ on $ \partial B_1,$} \\
v & = & 0, \text{ on $\partial B_i$, $ i\neq 1,$} \\
\underset{|x| \to \infty}{\lim} |v(x)| & = & 0,
\end{array}
\right.
\end{equation}
with $V \in \mathbb{R}^3$ and $D$ a trace-free $3\times 3 $ matrix. We set $$v_1:= U_{x_1,R}[V] + A_{x_1,R}[D].$$ 
We aim to show that the velocity field $v_1$ is a good approximation of the unique solution $v$. 
\begin{lemma}\label{approximation_result}
For $N$ sufficiently large, we have the following error bound:
$$
\| \nabla v - \nabla v_1 \|_{L^2(\mathbb{R}^3 \setminus \bigcup_i \overline{B_i})} \lesssim \frac{R}{\sqrt{d_{\min}^N}} |V| + \frac{R^3}{|d_{\min}^N|^{3/2}}|D|.
$$\end{lemma}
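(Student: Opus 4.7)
The plan is to use the variational characterization of $w := v - v_1$ as the minimizer of Dirichlet energy among admissible divergence-free fields, and to build an explicit competitor $\tilde w$ by cutting off $v_1$ near each neighbouring particle and restoring incompressibility with a Bogovskii corrector. Concretely, $w$ solves the Stokes system on $\Omega := \mathbb{R}^3 \setminus \bigcup_i \overline{B_i}$ with $w|_{\partial B_1}=0$ and $w|_{\partial B_i}=-v_1|_{\partial B_i}$ for $i\neq 1$; since $v_1 = U_{x_1,R}[V]+A_{x_1,R}[D]$ is divergence-free on $\mathbb{R}^3\setminus\overline{B_1}$ and each $B_i$ ($i\neq 1$) lies in this domain, $\int_{\partial B_i} v_1\cdot n = 0$, so these traces are admissible data for the variational problem \eqref{variational_form}. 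Thus $\|\nabla w\|_{L^2(\Omega)} \leq \|\nabla \tilde w\|_{L^2(\Omega)}$ for every admissible $\tilde w \in D_\sigma(\Omega)$.

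For the construction, pick $\chi_i \in \mathcal{C}^\infty_c(\mathbb{R}^3)$ supported in $B(x_i,2R)$, equal to $1$ on $\overline{B_i}$, with $\|\nabla\chi_i\|_\infty \lesssim 1/R$. By \eqref{Rdmin} the balls $B(x_i,2R)$ are pairwise disjoint and disjoint from $B_1$ for $N$ large. The naive choice $-\chi_i v_1$ matches $-v_1$ on $\partial B_i$ but carries divergence $-\nabla\chi_i\cdot v_1$; on the annulus $A_i := B(x_i,2R)\setminus\overline{B_i}$ introduce a Bogovskii corrector $\eta_i$ solving $\div\eta_i = \nabla\chi_i\cdot v_1$ with $\eta_i=0$ on $\partial A_i$. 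The compatibility $\int_{A_i}\nabla\chi_i\cdot v_1=0$ follows from the same divergence-theorem argument, and since all the $A_i$ are rescaled copies of a single fixed annulus, the Bogovskii constant is independent of $i$, giving $\|\nabla\eta_i\|_{L^2(A_i)} \lesssim R^{-1}\|v_1\|_{L^2(A_i)}$. Setting $\tilde w := -\sum_{i\neq 1}(\chi_i v_1 - \eta_i)$ yields an admissible divergence-free competitor whose summands have disjoint supports, hence
\[
\|\nabla\tilde w\|^2_{L^2(\Omega)} \;\lesssim\; \sum_{i\neq 1}\Big(\|\nabla v_1\|_{L^2(B(x_i,2R))}^2 + R^{-2}\|v_1\|_{L^2(B(x_i,2R))}^2\Big).
\]

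On $B(x_i,2R)$ we have $|x-x_1|\sim d_{1i}$, so the pointwise decay estimates \eqref{Stokeslet_maj} and \eqref{decay_rate} applied to $v_1$ bound each summand by $R^3|V|^2/d_{1i}^2 + R^7|D|^2/d_{1i}^4$. What remains is to estimate $\sum_{i\neq 1} 1/d_{1i}^2$ and $\sum_{i\neq 1} 1/d_{1i}^4$. A dyadic decomposition of the indices according to $d_{1i}$, combined with the bound $|\{j:d_{1j}\leq r\}| \lesssim \min(N,\,M^N(1+r/\lambda^N)^3)$ following from \eqref{bound_concentration}, and with $M^N \leq \bar M N (\lambda^N)^3$ and $(\lambda^N)^3 \leq \mathcal{E}(d_{\min}^N)^2$, yields $\sum_{i\neq 1}1/d_{1i}^2 \lesssim N$ and $\sum_{i\neq 1}1/d_{1i}^4 \lesssim N/(d_{\min}^N)^2$. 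Substituting $R=r_0/N$ and using $d_{\min}^N\to 0$ gives $R^3 N \lesssim R^2/d_{\min}^N$ and $R^7 N/(d_{\min}^N)^2 \lesssim R^6/(d_{\min}^N)^3$, so the right-hand side is dominated by $R^2|V|^2/d_{\min}^N + R^6|D|^2/(d_{\min}^N)^3$; taking square roots produces the claim.

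The main technical obstacle is the dyadic summation: neither the short-range bound furnished by $M^N$ nor the trivial cap $|\{i\neq 1\}|\leq N$ closes the estimate alone (the former leaves the tail of far particles uncontrolled; the latter is too coarse at short range). The crucial balancing takes place at the distance $r_\ast := \lambda^N(N/M^N)^{1/3}$ where the two bounds meet, and this is where both parameters $\bar M, \mathcal{E}$ of Definition~\ref{def_regime} enter in an essential way to keep the estimate uniform in $N$.
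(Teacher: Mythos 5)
Your proof is correct, but it takes a genuinely different route from the paper's. You apply the variational principle to the difference $w=v-v_1$ itself and build the competitor by cutting off $-v_1$ near \emph{each} of the $N-1$ other particles at scale $R$, restoring the divergence constraint with one Bogovskii corrector per annulus $B(x_i,2R)\setminus \overline{B_i}$; the price is that you must sum $R^3|V|^2 d_{1i}^{-2}+R^7|D|^2 d_{1i}^{-4}$ over all $i\neq 1$, which you close with the counting bounds $\sum_{i\neq 1} d_{1i}^{-2}\lesssim N$ and $\sum_{i\neq 1} d_{1i}^{-4}\lesssim N|d_{\min}^N|^{-2}$ (essentially the paper's Lemma \ref{JO} together with \eqref{bound_concentration}, \eqref{hyp1}) and the relation $R=r_0/N$. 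The paper instead first uses the orthogonality identity $\|\nabla(v-v_1)\|^2=\|\nabla v\|^2-\|\nabla v_1\|^2$ (valid because $v$ and $v_1$ share the trace on $\partial B_1$ and $v$ vanishes on the other particles), then applies the variational principle to $v$ with a \emph{single} cutoff of $v_1$ at scale $d_{\min}^N/4$ around $x_1$ and one Bogovskii corrector, so the whole error reduces to an integral of $|v_1|^2$ and $|\nabla v_1|^2$ over one annulus at distance $\sim d_{\min}^N$ from $x_1$. What each approach buys: the paper's argument needs only $R\ll d_{\min}^N$ (i.e.\ \eqref{Rdmin}) and gives constants independent of $\bar M$, $\mathcal{E}$ and of $R\sim 1/N$, whereas yours genuinely uses the configuration assumptions and the scaling of $R$ — which is admissible here, since the $\lesssim$ convention of the paper allows dependence on $r_0,\bar M,\mathcal{E}$ and those hypotheses are in force wherever the lemma is applied — and in exchange avoids the energy identity by working with $w$ directly, in a spirit closer to a reflection-type error estimate localized at the obstacles. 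Both constructions land on the same bound $\frac{R^2}{d_{\min}^N}|V|^2+\frac{R^6}{|d_{\min}^N|^{3}}|D|^2$ before taking square roots.
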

\begin{proof}
We have 
\begin{multline*}
\| \nabla v - \nabla v_1 \|^2_{L^2(\mathbb{R}^3 \setminus \bigcup_i \overline{B_i})} = \| \nabla v \|^2_{L^2(\mathbb{R}^3 \setminus \bigcup_i \overline{B_i})}\\ - 2 \int_{\mathbb{R}^3 \setminus \bigcup_i \overline{B_i}} \nabla v : \nabla v_1 + \|\nabla v_1 \|^2_{L^2(\mathbb{R}^3 \setminus \bigcup_i \overline{B_i})},
\end{multline*}
as $v$ and $v_1$ satisfy the same boundary condition on $\partial B_1$ this yields 
\begin{multline}
\int_{\mathbb{R}^3 \setminus \bigcup_i \overline{B_i}} \nabla v : \nabla v_1 =- \int_{\partial B_1}( \partial_n v_1 - p_1 n) \cdot v\\ =-\int_{\partial B_1}( \partial_n v_1 - p_1 n) \cdot v_1 =\|\nabla v_1 \|^2_{L^2(\mathbb{R}^3 \setminus \bigcup_i \overline{B_i})} ,
\end{multline}
hence 
$$
\| \nabla v - \nabla v_1 \|^2_{L^2(\mathbb{R}^3 \setminus \bigcup_i \overline{B_i})} = \| \nabla v \|^2_{L^2(\mathbb{R}^3 \setminus \bigcup_i \overline{B_i})} - \| \nabla v_1 \|^2_{L^2(\mathbb{R}^3 \setminus \bigcup_i \overline{B_i})}.
$$
In order to bound the first term we construct an extension $\tilde{v}$ of the boundary conditions of $v$ and apply the variational principle. We define
$$
\tilde{v}:= \chi\left (\frac{\cdot - x_1}{d_{\min}^N/4} \right ) v_1 - \mathcal{B}_{x_1, d_{\min}^N/4,d_{\min}^N/2}[\bar{f} ],
$$
where $\chi$ is a truncation function such that $\chi = 1 $ on $B(0,1)$ and $\chi=0$ out of $B(0,2)$. Thanks to formula \eqref{Rdmin}, for $N$ sufficiently large we have $R < d_{\min}^N/4$ and thus $\supp \tilde{v} \subset B(x_1, d_{\min}^N/2)$. $\bar{f}$ is defined as follows 
$$
\bar{f}(x):= v_1(x) \cdot \nabla \left [x\mapsto  \chi\left (\frac{x - x_1}{d_{\min}^N/4} \right )  \right],
$$
and $\mathcal{B}_{x_1, d_{\min}^N/4,d_{\min}^N/2}$ denotes the Bogovskii operator satisfying  
$$
\div \mathcal{B}_{x_1, d_{\min}^N/4,d_{\min}^N/2}[f] = f,
$$ 
for all $f\in L^q_0(B(x_1,d_{\min}^N / 2 ) \setminus \overline{B(x_1,d_{\min}^N/4})$ , $q\in(0,\infty)$. We refer to \cite[Theorem III.3.1]{Galdi} for a complete definition of the Bogovskii operator. In particular, from \cite[Lemma 16]{Hillairet}, there exists a positive constant $C$ independent of $d_{\min}^N$ such that
\begin{equation}\label{bogovski}
\| \nabla  \mathcal{B}_{x_1, d_{\min}^N/4,d_{\min}^N/2}[\bar{f}]  \|_{L^2(A_1)} \leq C \|\bar{f} \|_{L^2(A_1)},
\end{equation}
where $A_1:= B(x_1,{d_{\min}^N}/{2}) \setminus \overline{B(x_1,{d_{\min}^N}/{4})}$.
With this construction $\tilde{v}$ is a divergence-free field satisfying the same boundary conditions as $v$. Moreover, applying formula \eqref{bogovski}, there exists (another) constant $C>0$ such that
\begin{equation*}
\begin{split}
&\|\nabla \tilde{v} \|^2_{L^2(\mathbb{R}^3 \setminus\bigcup_i \overline{B_i})}\\
& =
\int_{\mathbb{R}^3\setminus  \bigcup_i \overline{B_i}} \left|\nabla \left [x \mapsto \chi\left (\frac{x - x_1}{d_{\min}^N/4} \right ) v_1(x) \right ]\right|^2 dx\\
& + \int_{\mathbb{R}^3\setminus \bigcup_i \overline{B_i}}| \nabla \mathcal{B}_{x_1, d_{\min}^N/4,d_{\min}^N/2}[\bar{f} ](x)|^2 dx\\
& - 2 \int_{\mathbb{R}^3\setminus \bigcup_i \overline{B_i}} \nabla \left [x \mapsto \chi\left (\frac{x - x_1}{d_{\min}^N/4} \right ) v_1(x) \right ] : \nabla \mathcal{B}_{x_1, d_{\min}^N/4,d_{\min}^N/2}[\bar{f} ](x) dx\,,\\
&\leq \int_{\mathbb{R}^3\setminus B_1} | \chi\left (\frac{x - x_1}{d_{\min}^N/4} \right ) \nabla v_1(x)|^2 dx\\
&+ C \left (\int_{A_1} |\nabla v_1(x)|^2+\frac{1}{|d_{\min}^N|^2} \left| \nabla \chi\left (\frac{x - x_1}{d_{\min}^N/4} \right ) \right|^2 |v_1(x)|^2 \right ) dx.
\end{split}
\end{equation*}
Since $\chi\left (\frac{\cdot - x_1}{d_{\min}^N/4} \right)=1$ on $B(x_1, d_{\min}^N/4)$ we get 
\begin{align*}
\| \nabla v - \nabla v_1 \|^2_{L^2(\mathbb{R}^3 \setminus \bigcup_i\overline{ B_i})}  & \leq \| \nabla \tilde{v} \|^2_{L^2(\mathbb{R}^3 \setminus \bigcup_i \overline{B_i})} - \| \nabla v_1 \|^2_{L^2(\mathbb{R}^3 \setminus \bigcup_i \overline{B_i})}\,, \\
& \lesssim \int_{A_1} |\nabla v_1(x)|^2dx\,, \\
&+ \int_{A_1} \frac{1}{|d_{\min}^N|^2}\left | \nabla \chi\left (\frac{x - x_1}{d_{\min}^N/4} \right )\right|^2  |v_1|^2 dx,
\end{align*}
Thanks to \eqref{Stokeslet_maj} and \eqref{decay_rate} we have: 
\begin{equation*}
\begin{split}
& \int_{A_1}\frac{1}{|d_{\min}^N|^2} \left | \nabla \chi \left (\frac{x - x_1}{d_{\min}^N/4} \right )\right|^2 \left| v_1\right |^2 \\
&\lesssim \|\nabla \chi\|^2_{\infty} \int_{A_1} \frac{1}{|d_{\min}^N|^2} \left(R^2 \frac{|V|^2}{|x-x_1|^2}+ R^6 \frac{|D|^2}{|x-x_1|^4} \right)\,,\\
 & \lesssim \frac{1}{|d_{\min}^N|^2} \int_{d_{\min}^N/4}^{d_{\min}^N/2} \left ( R^2{|V|^2}+ R^6 \frac{|D|^2}{r^2} \right) dr\,,\\
& \lesssim \frac{1}{|d_{\min}^N|^2} \left ( R^2{|V|^2 d_{\min}^N}+ R^6 \frac{|D|^2}{d_{\min}^N} \right).
\end{split}
\end{equation*}
Reproducing an analogous computation for the first term we obtain finally
\begin{equation}
\| \nabla v - \nabla v_1 \|^2_{L^2(\mathbb{R}^3 \setminus \bigcup_i B_i)} \lesssim \frac{R^2}{{d_{\min}^N}} |V|^2 + \frac{R^6}{|d_{\min}^N|^{3}}|D|^2.
\end{equation}
This yields the expected result.\end{proof}
\subsection{Estimation of the fluid stresslet}
In this part we focus on approaching the stresslet $S_i$, $ 1 \leq i \leq N$, see \eqref{TS} for the definition. Unlike the drag force $F_i$ and torque $T_i$, the symmetric part of the first momentum does not appear in the ODEs governing the motion of particles, see \cite[Section 2.2.3]{Guazzelli&Morris} for more details. However, in order to approximate the velocities $(V_i,\Omega_i)$, we only need to estimate its value. Precisely we have 
\begin{proposition}\label{strain_estimate}
For $N$ sufficiently large, there exists a positive constant $C>0$ independent of the data such that we have for all $1 \leq i \leq N$
$$
|S_i| \lesssim  \frac{R^3}{|d_{\min}^N|^{3/2}}\underset{1 \leq j \leq N}{\max} \left(|V_j|+R|\Omega_j|\right)\,.
$$
\end{proposition}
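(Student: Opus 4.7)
The plan is to apply Stokes reciprocity to convert $E:S_i$ into a volume pairing against $u^N$, and then invoke Lemma~\ref{approximation_result} to generate the factor $R^3/|d_{\min}^N|^{3/2}$.

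Fix a trace-free symmetric matrix $E$ with $|E|\leq 1$ and let $\tilde w$ denote the unique solution to the Stokes problem on $\Omega:=\mathbb{R}^3\setminus\bigcup_j\overline{B_j}$ with boundary value $E(x-x_i)$ on $\partial B_i$, $0$ on every $\partial B_j$ with $j\neq i$, and decaying at infinity. A standard integration by parts using $\nabla\cdot\sigma(u^N,p^N)=0$ and the divergence-freeness of $\tilde w$ yields
$$\int_\Omega 2D(u^N):D(\tilde w)\,dx \;=\; -\sum_k\int_{\partial B_k}(\sigma(u^N,p^N) n)\cdot \tilde w\,d\sigma \;=\; -E:S_i,$$
since $\tilde w$ vanishes on every $\partial B_j$ with $j\neq i$ and equals $E(x-x_i)$ on $\partial B_i$.

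I would then decompose $\tilde w = A^{(2)}_{x_i,R}[E] + r$. The crucial cancellation is that $A^{(2)}_{x_i,R}[E]$ exerts zero force and zero torque on every particle: on $B_i$ this is \eqref{stresslet_force_torque}, while for $j\neq i$ the field $A^{(2)}_{x_i,R}[E]$ is a smooth Stokes flow in a neighborhood of $\overline{B_j}$, so both $\int_{\partial B_j}\sigma(A^{(2)})\,n$ and $\int_{\partial B_j}(x-x_j)\times\sigma(A^{(2)})\,n$ reduce via the divergence theorem to interior integrals of $\nabla\cdot\sigma(A^{(2)})=0$ (using symmetry of $\sigma$ for the torque), hence vanish. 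Integrating by parts the other way and using $u^N=V_k+\Omega_k\times(x-x_k)$ on $\partial B_k$,
$$\int_\Omega 2D(u^N):D(A^{(2)}_{x_i,R}[E])\,dx \;=\; -\sum_k\bigl[V_k\cdot F^{A^{(2)}}_k + \Omega_k\cdot T^{A^{(2)}}_k\bigr]\;=\;0.$$
Subtracting this from the previous identity yields
$$E:S_i \;=\; -\int_\Omega 2D(u^N):D(r)\,dx.$$

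By Cauchy--Schwarz and Lemma~\ref{approximation_result} applied with $V=0$, $D=E$ (after relabeling $B_1\mapsto B_i$),
$$|E:S_i|\;\leq\; 2\|\nabla u^N\|_{L^2(\Omega)}\,\|\nabla r\|_{L^2(\Omega)} \;\lesssim\;\|\nabla u^N\|_{L^2(\Omega)}\cdot \frac{R^3}{|d_{\min}^N|^{3/2}}|E|.$$
To control $\|\nabla u^N\|_{L^2(\Omega)}$ I would invoke the variational characterization \eqref{variational_form} with a test field built from truncated single-particle Stokeslets and rotlets: set $\tilde u = \sum_j\chi_j(U_{x_j,R}[V_j]+A^{(1)}_{x_j,R}[\Omega_j])-\sum_j \mathcal{B}_j[\bar{f}_j]$, with $\chi_j=1$ on $B(x_j,d_{\min}^N/4)$, $\chi_j=0$ outside $B(x_j,d_{\min}^N/2)$, and $\bar{f}_j$ the associated Bogovskii correction as in the proof of Lemma~\ref{approximation_result}. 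The disjointness of the supports combined with the decay bounds \eqref{Stokeslet_maj} on $U_{x_j,R}$ and their rotlet analogues give
$$\|\nabla\tilde u\|_{L^2(\Omega)}^2 \;\lesssim\; \sum_{j=1}^N\bigl(R|V_j|^2+R^3|\Omega_j|^2\bigr)\;\lesssim\; NR\max_j(|V_j|+R|\Omega_j|)^2 \;=\; r_0\max_j(|V_j|+R|\Omega_j|)^2,$$
whence $\|\nabla u^N\|_{L^2(\Omega)}\lesssim \max_j(|V_j|+R|\Omega_j|)$. Combining with the previous display and taking the supremum over unit trace-free symmetric $E$ yields the claim.

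The main obstacle is the cancellation identity $\int_\Omega 2D(u^N):D(A^{(2)}_{x_i,R}[E])=0$: without it, a naive duality bound using $\|\nabla\tilde w\|_{L^2}\lesssim R^{3/2}|E|$ would only yield $|S_i|\lesssim R^{3/2}\max_j(|V_j|+R|\Omega_j|)$, which is strictly weaker than the claim whenever $d_{\min}^N\gg R$. The sharper bound relies on the non-obvious vanishing of the force and torque that the straining sphere flow $A^{(2)}_{x_i,R}[E]$ exerts on \emph{every} other sphere $B_j$, which lets us replace $\tilde w$ by the much smaller residual $r$ controlled by Lemma~\ref{approximation_result}.
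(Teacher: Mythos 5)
Your proof is correct and takes essentially the same route as the paper: the duality identity expressing $E:S_i$ (up to a sign depending on the normal convention, which is immaterial) as $2\int D(u^N):D(\tilde w)$, the key cancellation coming from the vanishing force and torque exerted by $A^{(2)}_{x_i,R}[E]$ on every particle, Lemma \ref{approximation_result} with $V=0$, $D=E$ to control the residual by $R^3|d_{\min}^N|^{-3/2}|E|$, and an energy bound $\|\nabla u^N\|_{L^2}\lesssim \max_j(|V_j|+R|\Omega_j|)$ obtained from the variational principle with a truncated Stokeslet/rotlet test field corrected by Bogovskii operators. The only difference is that you spell out the last energy estimate and the vanishing of force/torque on the spheres $B_j$, $j\neq i$, which the paper merely cites or states briefly.
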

\begin{proof}
We fix $i=1$. Let $E$ be a trace-free symmetric $3\times 3$ matrix. We define $v$ as the unique solution to the following Stokes equation
\begin{equation}
\left \{
\begin{array}{rcl}
-\Delta v + \nabla p & = &0, \\ 
\div v & = & 0,
\end{array}
\text { on $\mathbb{R}^3 \setminus {\underset{i=1}{\overset{N}{\bigcup}} \overline{B_i}}$, } 
\right.
\end{equation}
completed with the boundary conditions
\begin{equation}
\left \{
\begin{array}{rcl}
v & = &  E(x-x_1), \text{ on $ \partial B_1,$} \\
v & = & 0, \text{ on $\partial B_i$, $ i\neq 1,$} \\
\underset{|x| \to \infty}{\lim} |v(x)| & = & 0.
\end{array}
\right.
\end{equation}
We also denote by $(v_1,p_1)$ the special solution $(A^{(2)}_{x_1,R}[E],P^{(2)}_{x_1,R}[E])$. We have thanks to the symmetry of $E$
\begin{align}\label{formule___1}
S_1 : E&= \int_{\partial B_1} \sym \left( [\sigma(u^N,p^N) n ]\otimes(x-x_1) \right): E \,, \notag\\
&=-\int_{\partial B_1} [\sigma(u^N,p^N) n ] \cdot E(x-x_1) \,, \notag\\
&=- \int_{\partial B_1} [\sigma(u^N,p^N) n ] \cdot v \,, \notag\\
&=2 \int_{\mathbb{R}^3 \setminus \bigcup_i \overline{B_i}} D(u^N) : D(v)\,, \notag\\
&=2 \int_{\mathbb{R}^3 \setminus \bigcup_i \overline{B_i}} D(u^N) : D(v-v_1) +2 \int_{\mathbb{R}^3 \setminus \bigcup_i \overline{B_i}} D(u^N) : D(v_1) \,.
\end{align}
Using an integration by parts we have for the second term in the right hand side
\begin{align*}
\phantom{a} & 2 \int_{\mathbb{R}^3 \setminus \bigcup_i \overline{B_i}} D(u^N) : D(v_1)=- \underset{i=1}{\overset{N}{\sum}} \int_{\partial B_i} [\sigma(v_1,p_1) n ] \cdot (V_i + \Omega_i \times (x-x_i))\,,\\
&=  -\underset{i=1}{\overset{N}{\sum}} \left ( \int_{\partial B_i} [\sigma(v_1,p_1) n ] \right) \cdot V_i - \left ( \int_{\partial B_i} [\sigma(v_1,p_1) n ]\times (x-x_i) \right)\cdot \Omega_i\,,\\
&=0\,,
\end{align*}
since $v_1$ corresponds to a flow submitted only to a strain, see \eqref{stresslet_force_torque}. For the first term in the right hand side of \eqref{formule___1}, using Lemma \ref{approximation_result} we have 
$$
\left | \int_{\mathbb{R}^3 \setminus \bigcup_i \overline{B_i}} D(u^N) : D(v-v_1) \right| \leq  \| \nabla u^N\|_{L^2(\mathbb{R}^3 \setminus \bigcup_i \overline{B_i})} \frac{R^3}{|d_{\min}^N|^{3/2}}|E| \,.
$$
It remains to estimate $\| \nabla u^N\|_{L^2(\mathbb{R}^3 \setminus \bigcup_i \overline{B_i})}$. One can reproduce the same arguments as for the proof of Lemma \ref{approximation_result} or follow the same proof as \cite[Lemma 10]{Hillairet} to get
$$
 \| \nabla u^N\|_{L^2(\mathbb{R}^3 \setminus \bigcup_i \overline{B_i})}^2 \lesssim \, \underset{i}{\max} (|V_i|^2 + R^2 | \Omega_i|^2).
$$
Gathering all the estimates we obtain
$$
S_1 : E \lesssim \frac{R^3}{|d_{\min}^N|^{3/2}}|E|\, \underset{i}{\max} \left( |V_i|+ R| \Omega_i|  \right)\,,
$$
this being true for all symmetric trace-free $3\times3$ matrix $E$, we obtain the desired result.
\end{proof}
\section{Analysis of the stationary Stokes equation}
This Section is devoted to the analysis of a method of reflections and computation of the unknown velocities $(V_i,\Omega_i)_{1 \leq i \leq N}$. We remind that, for fixed time, $u^N$ is the unique solution to the stationary Stokes problem \begin{equation*}
\left \{
\begin{array}{rcl}
-\Delta u^N + \nabla p^N & = &0, \\ 
\div u^N & = & 0,
\end{array}
\text { on $\mathbb{R}^3 \setminus \underset{i=1}{\overset{N}{\bigcup}}\overline{B_i}$, } 
\right.
\end{equation*}
completed with the no-slip boundary conditions
\begin{equation*}
\left \{
\begin{array}{rcl}
u^N & = &  V_i +\Omega_i \times (x-x_i), \text{ on $ \partial {B_i}$}, \\
\underset{|x| \to \infty}{\lim} |u^N(x)| & = & 0,
\end{array}
\right.
\end{equation*}
where $(V_i,\Omega_i)$ are the unique velocities satisfying
\begin{eqnarray}\label{force}
F_i+ mg=0 \,,&  T_i= 0 \,,& \forall \,1 \leq i \leq N.
\end{eqnarray}
In this Section, we show that at each fixed time $t\geq0$, the convergence of the method of reflections toward the unique solution $u^N$ holds true in the case where $(X^N(t))_{N \in \mathbb{N}^*} \in \mathcal{X}(\bar{M}, \mathcal{E})$ and under the assumption that $r_0\bar{M}^{1/3}$ is small enough.
\subsection{The method of reflections} \label{reflection}
In this part, we present and prove the convergence of a modified method of reflections for the velocity field $u^N$ for arbitrary $N\in \mathbb{N}^*$, we remind that $u^N$ is the unique solution to the stationary Stokes problem \eqref{eq_stokes}, \eqref{cab_stokes}, 
with unique velocities $(V_i,\Omega_i)$ satisfying \eqref{force}.
The main idea is to express $u^N$ as the superposition of $N$  fields produced by the isolated $N$ particle. Thanks to the superposition principle, we know that the velocity field $$\underset{i=1}{\overset{N}{\sum}}\left (U_{x_j,R}[V_j](x)+A_{x_j,R}[\Omega_j](x)\right),$$ satisfies a Stokes equation on $\mathbb{R}^3 \setminus \underset{i}{ \bigcup \overline{B_i}} $. But this velocity field does not match the boundary conditions of $u^N$. Indeed, for all $1 \leq i \leq N$ and $x \in B_i$ we have
\begin{align*}
 u_*^{(1)}(x)& := u^N(x)- \underset{j=1}{\overset{N}{\sum}} \left (U_{x_j,R}[V_j](x)+A_{x_j,R}[\Omega_j](x)\right) \,,\\
 & = - \underset{i\neq j}{\overset{N}{\sum}} \left (U_{x_j,R}[V_j](x)+ A_{x_j,R}[\Omega_j](x)\right)\,,
\end{align*}
which represents the error committed on the boundary conditions when approaching $u^N$ by the sum of the particular Stokes solutions. In this paper, for all $u_* \in \mathcal{C}^\infty (\underset{i}{\bigcup}  \overline{B_i})$ we use the notation $U[u_*]$ to define the unique solution of the Stokes problem 
\begin{equation}
\left \{
\begin{array}{rcl}
-\Delta u + \nabla p & = &0, \\ 
\div u & = & 0,
\end{array}
\text { on $\mathbb{R}^3 \setminus \underset{i=1}{\overset{N}{\bigcup}} \overline{B_i}, $ } 
\right.
\end{equation}
completed by the boundary conditions
\begin{equation}
\left \{
\begin{array}{rcl}
u & = & u_*(x), \text{ on $  B_i,$} \\
\underset{|x| \to \infty}{\lim} |u(x)| & = & 0 ,
\end{array}
\right.
\end{equation}
hence, we can write 
$$
u^N= \underset{i=1}{\overset{N}{\sum}} U_{x_i,R}[V_i]+A_{x_j,R}[\Omega_j](x) + U[u_*^{(1)}].
$$
Note that the boundary condition $u_*^{(1)}$ is not constant on each particle $B_i$, thus, the idea is to approach $u_*^{(1)}$ by  
\begin{equation}\label{expansion}
 u_*^{(1)}(x) \sim u_*^{(1)}(x_i) + \nabla u_*^{(1)}(x_i) \cdot (x-x_i),
\end{equation}
on each particle $B_i$ and write $U[u_*^{(1)}]$ as follows:
$$
U[u_*^{(1)}]= \underset{j=1}{\overset{N}{\sum}} \left ( U_{x_j,R}[V_j^{(1)}] + A_{x_j,R}[\nabla_j^{(1)}] \right ) +U[u_*^{(2)}],
$$
where
$$ V_i^{(1)}:= u_*^{(1)}(x_i)= - \underset{j\neq i}{\sum} \left (U_{x_j,R}[V_j](x_i)+A_{x_j,R}[\Omega_j](x_i)\right),
$$
$$ \nabla_i^{(1)}:= \nabla u_*^{(1)}(x_i)= - \underset{j\neq i}{\sum}\left (\nabla U_{x_j,R}[V_j](x_i)+\nabla A_{x_j,R}[\Omega_j](x_i)\right),
$$
remark that $\nabla_i^{(1)}$ has null trace due to the fact that $$\div u_*^{(1)}(x_i)=0.$$
We set then $U[u_*^{(2)}]$ the new error term satisfying
$$
u^N= \underset{j=1}{\overset{N}{\sum}} \left ( U_{x_j,R}[V_j] + A_{x_j,R}[\Omega_j] \right )+ \underset{j=1}{\overset{N}{\sum}} \left ( U_{x_j,R}[V_j^{(1)}] + A_{x_j,R}[\nabla_j^{(1)}] \right )+U[u_*^{(2)}],
$$
where for all $1\leq i \leq N$, and $x\in B_i$ 
\begin{align*}
u_*^{(2)}(x)&= u_*^{(1)}(x)- \underset{j=1}{\overset{N}{\sum}} \left ( U_{x_j,R}[V_j^{(1)}](x) + A_{x_j,R}[\nabla_j^{(1)}](x) \right ) \,,\\
 &= u_*^{(1)}(x)- V_i^{(1)} - \nabla_i^{(1)} (x-x_i) - \underset{j\neq i }{\overset{N}{\sum}} \left ( U_{x_j,R}[V_j^{(1)}](x) + A_{x_j,R}[\nabla_j^{(1)}](x) \right ).
\end{align*}
We iterate then the process by setting for all $1\leq i \leq N$ 
\begin{eqnarray}\label{formule0}
V_i^{(0)}:= V_i\,,& \nabla_i^{(0)}:= \Omega_i,
\end{eqnarray}
and for $p\geq 1 $,
\begin{eqnarray}\label{formule1}
V_i^{(p)}:= u_*^{(p)}(x_i)\,,& \nabla_i^{(p)}:= \nabla u_*^{(p)}(x_i),
\end{eqnarray}
for the error term we set
\begin{equation}\label{formule3}
u_*^{(0)}(x):= \underset{i}{\overset{N}{\sum}} \left( V_i+\Omega_i\times (x-x_i) \right)\, 1_{ B_i},
\end{equation}
and define for all  $p\geq 0$, $1\leq i \leq N$, $x\in B_i$
\begin{equation}\label{formule2}
\begin{split}
u_*^{(p+1)}(x) &= u_*^{(p)}(x)- \underset{j=1}{\overset{N}{\sum}} \left ( U_{x_j,R}[V_j^{(p)}](x) + A_{x_j,R}[\nabla_j^{(p)}](x) \right ) \\
& = u_*^{(p)}(x) - u_*^{(p)}(x_i) - \nabla u_*^{(p)}(x_i)(x-x_i)\\
& - \underset{j\neq i }{\overset{N}{\sum}} \left ( U_{x_j,R}[V_j^{(p)}](x) + A_{x_j,R}[\nabla_j^{(p)}](x) \right ) .
\end{split}
\end{equation}
With this construction the following equality holds true for all $k\geq1$  
\begin{equation}\label{devlopment}
u^N= \underset{p=0}{\overset{k}{\sum}}  \underset{j=1}{\overset{N}{\sum}} \left ( U_{x_j,R}[V_j^{(p)}] + A_{x_j,R}[\nabla_j^{(p)}] \right ) +U[u_*^{(k+1)}].
\end{equation}
\begin{remark}\label{reflection_generalization}
This method of reflection is obtained by expanding the error term $u_*$ up to the first-order
$$u_*(x) = u_*(x_i)+ \nabla u_*(x_i) (x-x_i)+o \,(|x-x_i|^2),$$
which leads us to formula \eqref{devlopment}.
If one consider an expansion of $u_*$ up to the zeroth-order then one obtain only a stokeslet development:  
$$
u^N= \underset{p=0}{\overset{k}{\sum}}  \underset{j=1}{\overset{N}{\sum}} U_{x_j,R}[V_j^{(p)}] +U[u_*^{(k+1)}].
$$
The main difference between these two expansions is that the first one allows us to tackle the particle rotation. It also helps us to obtain a converging method of reflections for a more general assumption on the minimal distance.\\
We emphasize that we only need to show that the series $\left(\underset{p=0}{\overset{k}{\sum}} V_i^{(p)},\,\underset{p=0}{\overset{k}{\sum}}\nabla_i^{(p)}\right)_{k \in \mathbb{N}}$ for all $ 1 \leq i \leq N$ converge to obtain the convergence of the expansion \eqref{devlopment}, see Lemma \ref{cvg_serie} and Proposition \ref{conv_expansion}. Precisely, the only assumptions needed to obtain the convergence of the series are the smallness of $\bar{M}^{1/3}r_0$, assumption \eqref{bound_concentration} and the fact that
\begin{eqnarray*}
 \underset{N \to \infty}{\lim}  \frac{|\lambda^N|^3}{d_{\min}^N}=0 \,,& \underset{N \to \infty}{\lim} \frac{R|\lambda^N|^3}{|d_{\min}^N|^2}=0,
\end{eqnarray*}
%$$
% \underset{N \to \infty}{\lim}  \frac{R|\lambda^N|^3}{|d_{\min}^N|^2}=0 \,,
%$$
which is less restrictive than \eqref{hyp1}.\\
The second step is to show that the expansion is a good approximation of the unique solution $u^N$. This is ensured by Proposition \ref{convergence_reflection}. Precisely, in addition of the previous assumptions, we need the following uniform bound
$$
\underset{N \in \mathbb{N}^*}{\sup} \frac{R |\lambda^N|^3}{|d_{\min}^N|^3} < + \infty. $$
One can show that this assumption is less restrictive than \eqref{hyp1} and allows us to consider smaller minimal distance.
To reach lower bound for the minimal distance, one may develop $u_*$ at higher orders.
\end{remark}
\subsubsection{Preliminary estimates}
Recall that the dependence in time is implicit in this Section. All the following estimates hold true under the assumption that there exists a sequence $(\lambda^N)_{N \in \mathbb{N}^*}$ and two positive constants $\bar{M}, \mathcal{E}$ such that $(X^N)_{N \in \mathbb{N}^*} \in \mathcal{X}(\bar{M}, \mathcal{E})$, see Definition \ref{def_regime} and $\bar{M}^{1/3}r_0$ is small enough.
\begin{lemma}\label{cvg_serie}
Assume that there exists $\bar{M}, \mathcal{E}$ and a sequence $(\lambda^N)_{N \in \mathbb{N}^*}$ such that the particle configuration $(X^N)_{N\in \mathbb{N}^*}$ lies in $\mathcal{X}(\bar{M}, \mathcal{E})$.
If $\bar{M}^{1/3} r_0$ is small enough, there exists a positive constant $K <1/2$ and $N(r_0,\bar{M},\mathcal{E})\in \mathbb{N}^*$  such that
$$
\underset{i}{\max}|V_i^{(p+1)}| + R \,  \underset{i}{\max}|\nabla_i^{(p+1)}| \leq K \left ( \underset{i}{\max}|V_i^{(p)}| + R \, \underset{i}{\max}|\nabla_i^{(p)}|\right ),
$$
for all $N\geq N(r_0,\bar{M},\mathcal{E})$.
\end{lemma}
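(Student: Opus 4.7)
The plan is to iterate the decay estimates of Section~2 on the recursive formulas \eqref{formule1}--\eqref{formule2}. Setting $A_p := \max_i |V_i^{(p)}|$ and $B_p := R\max_i |\nabla_i^{(p)}|$, the bounds \eqref{Stokeslet_maj} and \eqref{decay_rate} applied to $V_i^{(p+1)} = -\sum_{j\neq i}\bigl(U_{x_j,R}[V_j^{(p)}](x_i)+A_{x_j,R}[\nabla_j^{(p)}](x_i)\bigr)$ and its gradient counterpart give
$$|V_i^{(p+1)}| \lesssim \sum_{j\neq i}\!\Bigl(\tfrac{R|V_j^{(p)}|}{d_{ij}}+\tfrac{R^3|\nabla_j^{(p)}|}{d_{ij}^2}\Bigr),\qquad |\nabla_i^{(p+1)}|\lesssim \sum_{j\neq i}\!\Bigl(\tfrac{R|V_j^{(p)}|}{d_{ij}^2}+\tfrac{R^3|\nabla_j^{(p)}|}{d_{ij}^3}\Bigr).$$
Introducing the three deterministic sums $S_k := \max_i R^k\sum_{j\neq i} d_{ij}^{-k}$ for $k=1,2,3$, factoring out the maxima yields $A_{p+1}+B_{p+1} \lesssim (A_p+B_p)(S_1+S_2+S_3)$. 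So the whole statement reduces to showing $S_1+S_2+S_3 < 1/(2C)$ once $\bar M^{1/3}r_0$ is small and $N$ is large.

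To control each $S_k$ I would split the sum into a near part ($d_{ij}\le 2\lambda^N$) and a far part ($d_{ij}>2\lambda^N$). The near part involves at most $O(M^N)$ indices (concentration bound), each contributing $1/(d_{\min}^N)^k$; combined with $R=r_0/N$, $M^N\le \bar M N|\lambda^N|^3$, and $|\lambda^N|^3\le \mathcal{E}|d_{\min}^N|^2$, the near parts are bounded by $O(r_0\bar M\mathcal{E}d_{\min}^N)$, $O(r_0^2\bar M\mathcal{E}/N)$ and, using also $d_{\min}^N\gtrsim N^{-1/2}$ from \eqref{hyp4}, $O(r_0^3\bar M\mathcal{E}/N^{3/2})$ respectively, all tending to zero. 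For the far part I would compare $\sum_{d_{ij}>2\lambda^N}d_{ij}^{-k}$ with $N\int \tilde\rho^N(y)|y-x_i|^{-k}dy$, where $\tilde\rho^N$ is the smoothed density from the discussion after Theorem~\ref{thm2}, satisfying $\|\tilde\rho^N\|_\infty\lesssim\bar M$ and $\|\tilde\rho^N\|_1=1$: when $d_{ij}>2\lambda^N$, the integrand $1/|y-x_i|^k$ is comparable to $1/d_{ij}^k$ on $B(x_j,\lambda^N)$.

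The classical Riesz-potential interpolation $\|\int f(z)|\cdot-z|^{-\alpha}dz\|_\infty \lesssim \|f\|_\infty^{\alpha/3}\|f\|_1^{1-\alpha/3}$ valid for $0<\alpha<3$ then gives the far contributions to $S_1$ and $S_2$ bounded by $C r_0\bar M^{1/3}$ and $C r_0^2\bar M^{2/3}$ respectively. For $S_3$ the Riesz kernel is no longer locally integrable, so I would instead group the remote particles into annular shells $k\lambda^N\le d_{ij}<(k+1)\lambda^N$; each annulus contains $O(k^2 M^N)$ particles, giving a contribution $\lesssim (M^N/|\lambda^N|^3)\sum_{k\le \Lambda/\lambda^N}1/k \lesssim \bar M N\log N$, and after multiplying by $R^3$ we get $O(r_0^3\bar M\log N/N^2)\to 0$. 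Assembling everything,
$$S_1+S_2+S_3 \le C\bigl(r_0\bar M^{1/3}+r_0^2\bar M^{2/3}\bigr)+\varepsilon_N,\qquad \varepsilon_N\xrightarrow[N\to\infty]{}0,$$
so choosing $r_0\bar M^{1/3}$ small enough to make the deterministic term $<1/4$ and $N\ge N(r_0,\bar M,\mathcal{E})$ large enough to make $\varepsilon_N<1/4$ produces the desired constant $K<1/2$.

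The main obstacle will be the careful passage from the discrete sum to the continuous Riesz potential against $\tilde\rho^N$: one must check that the averaged integrals $\int_{B(x_j,\lambda^N)}dy/|y-x_i|^k$ are uniformly comparable to $1/d_{ij}^k$ in the far regime, treat the near contributions by hand via \eqref{bound_concentration}--\eqref{hyp1}, and in the borderline case $k=3$ replace the interpolation by direct shell-counting with a harmless logarithmic loss. Once these bookkeeping steps are done, the exponent $\bar M^{1/3}$ emerges from the $L^1$--$L^\infty$ interpolation and dictates the smallness criterion on $r_0\bar M^{1/3}$.
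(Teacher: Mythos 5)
Your proof is correct and follows essentially the same route as the paper: the decay estimates \eqref{Stokeslet_maj}, \eqref{decay_rate} applied to the recursion \eqref{formule1}--\eqref{formule2}, followed by bounds on $\max_i\sum_{j\neq i}d_{ij}^{-k}$ deduced from \eqref{bound_concentration} and \eqref{hyp1} via a near/far splitting against the smoothed density --- which is precisely the content of the paper's Lemma \ref{JO}, invoked there for $k=1,2$ (the paper avoids the cubic sum in this lemma by writing $R^3/d_{ij}^3\le (R/d_{\min}^N)\,R^2/d_{ij}^2$ and using \eqref{Rdmin}), with the persistent term $C\,r_0\bar M^{1/3}$ absorbed by the smallness assumption exactly as you do. The only point to tidy is the shell-counting bound for your $S_3$: the cloud diameter $\Lambda$ is not a priori controlled, but since at most $N$ shells are non-empty the harmonic sum is still bounded by $1+\log N$, so your $O(r_0^3\bar M\log N/N^2)$ estimate survives after this one-line fix.
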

\begin{proof}
Using formulas \eqref{formule1} and \eqref{formule2} we get
\begin{align}\label{formula_reflection}
V_i^{(p+1)}&=u_*^{(p+1)}(x_i) \notag \,,\\
& = - \underset{j\neq i }{\overset{N}{\sum}} \left ( U_{x_j,R}[V_j^{(p)}](x_i) + A_{x_j,R}[\nabla_j^{(p)}](x_i) \right ),
\end{align}
and
\begin{align}\label{formule_nabla}
\nabla_i^{(p+1)}&=\nabla u_*^{(p+1)}(x_i)\,, \notag \\
& = - \underset{j\neq i }{\overset{N}{\sum}} \left (\nabla U_{x_j,R}[V_j^{(p)}](x_i) +\nabla A_{x_j,R}[\nabla_j^{(p)}](x_i) \right ).
\end{align}
This yields, for all $ 1 \leq i \leq N$, using the decay-rate of the special solutions \eqref{decay_rate}, \eqref{Stokeslet_maj} and Lemma \ref{JO} with $k=1$ and $k=2$ 
\begin{equation*}
\begin{split}
&|V_i^{(p+1)}|\\
& \leq C  \underset{j\neq i }{\sum} R \,\frac{|V_j^{(p)}|}{d_{ij}} + R^3 \frac{|\nabla_j^{(p)}|}{d_{ij}^2} \\ 
& \leq C r_0\left( \underset{i}{\max}|V_i^{(p)}| + R\, \underset{i}{\max}|\nabla_i^{(p)}| \right ) \left ( \frac{|\lambda^N|^3}{|d_{\min}^N|}\bar{M}+ \bar{M}^{1/3}+ \frac{R|\lambda^N|^3}{|d_{\min}^N|^2} + R \bar{M}^{2/3}\right ),
\end{split}
\end{equation*}
similarly, using \eqref{Rdmin}, we have for all $1 \leq i \leq N$
\begin{align*}
|\nabla_i^{(p+1)}|& \leq C \underset{j\neq i }{\sum} R \,\frac{|V_j^{(p)}|}{d_{ij}^2} + R^3 \frac{|\nabla_j^{(p)}|}{d_{ij}^3}\,, \\ 
& \leq C \left( \underset{i}{\max}|V_i^{(p)}| + R\, \underset{i}{\max}|\nabla_i^{(p)}| \right ) \left ( \underset{j\neq i}{\sum} \frac{R}{d_{ij}^2} + \frac{1}{d_{\min}^N} \underset{j\neq i}{\sum} \frac{R^2}{d_{ij}^2} \right)\,,\\
& = C \left( \underset{i}{\max}|V_i^{(p)}| + R\, \underset{i}{\max}|\nabla_i^{(p)}| \right ) \left ( \underset{j\neq i}{\sum} \frac{R}{d_{ij}^2}\right) \left (1 + \frac{R}{d_{\min}^N} \right )\,, \\
& \leq C r_0 \left( \underset{i}{\max}|V_i^{(p)}| + R\, \underset{i}{\max}|\nabla_i^{(p)}| \right ) \left ( \frac{|\lambda^N|^3}{|d_{\min}^N|^2} \bar{M}+ \bar{M}^{2/3} \right )\,.
\end{align*}
Finally
\begin{align*}
\underset{i}{\max}|V_i^{(p+1)}| + R \,  \underset{i}{\max}|\nabla_i^{(p+1)}| &\leq  C r_0 \left( \underset{i}{\max}|V_i^{(p)}| + R\, \underset{i}{\max}|\nabla_i^{(p)}| \right )\\
& \times \left ( \frac{|\lambda^N|^3}{d_{\min}^N} \bar{M}+ \bar{M}^{1/3} + \frac{R|\lambda^N|^3}{|d_{\min}^N|^2}\bar{M} + R \bar{M}^{2/3} \right ) .
\end{align*}
For the second term on the right hand side we have
$$
\frac{|\lambda^N|^3}{d_{\min}^N}+\frac{R|\lambda^N|^3}{|d_{\min}^N|^2} = \frac{|\lambda^N|^3}{|d_{\min}^N|^2}\left(d_{\min}^N+ R \right)\,,
$$
which vanishes when $N$ tends to infinity according to \eqref{hyp1} and \eqref{compatibility}. Moreover, if $r_0 \bar{M}^{1/3} $ is small enough, this ensures the existence of a positive constant $K < 1/2 $  such that
$$\underset{i}{\max}|V_i^{(p+1)}| + R \,  \underset{i}{\max}|\nabla_i^{(p+1)}| \leq  K \left( \underset{i}{\max}|V_i^{(p)}| + R\, \underset{i}{\max}|\nabla_i^{(p)}| \right ).
$$
for $N$ large enough and depending on $r_0$, $\bar{M}$ and $\mathcal{E}$. \end{proof}
We have the following estimate.
\begin{proposition}\label{conv_expansion}
Let $(U_i)_{1 \leq i \leq N}$ be $N$ vectors of $\mathbb{R}^3$ and $(D_i)_{1 \leq i \leq N}$ be $N$ trace-free $3\times3$ matrices. There exists $N(r_0,\bar{M},\mathcal{E})\in \mathbb{N}^*$ such that for all $N\geq N(r_0,\bar{M},\mathcal{E})$ we have 
%$$
%\left \| \underset{i=1}{\overset{N}{\sum}} \left ( U_{x_i,R} \left [\underset{p=0}{\overset{k}{\sum}} V_i^{(p)}- V_i^{(\infty)} \right] + A_{x_i,R} \left[\underset{p=0}{\overset{\infty}{\sum}} \nabla_i^{(p)}- \nabla_i^{(\infty)}  \right] \right )  \right \|_{\dot{H}^1(\mathbb{R}^3)} \lesssim 
%$$
$$
\left \| \underset{i=1}{\overset{N}{\sum}} \left ( U_{x_i,R}[ U_i] + A_{x_i,R} [D_i] \right )  \right \|_{\dot{H}^1(\mathbb{R}^3\setminus \underset{l}{\cup}\overline{B}_l)} \lesssim \underset{1 \leq i\leq N}{\max} (|U_i|+R|D_i|).
$$
\end{proposition}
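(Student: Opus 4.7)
The plan is to combine the variational characterization of the Stokes solution with an explicit divergence-free extension, in the spirit of the proof of Lemma~\ref{approximation_result}. Write $v := \sum_{i=1}^{N}\bigl(U_{x_i,R}[U_i]+A_{x_i,R}[D_i]\bigr)$ and set $M_0 := \max_i\bigl(|U_i|+R|D_i|\bigr)$. Each summand is a Stokes flow on the exterior of its own sphere that decays at infinity, so $v$ is itself a Stokes flow on $\Omega:=\mathbb{R}^3\setminus\bigcup_l\overline{B_l}$ vanishing at infinity. By uniqueness, $v$ minimizes $\|\nabla w\|_{L^2(\Omega)}^2$ among all $w\in D_\sigma(\Omega)$ with $w|_{\partial B_l}=v|_{\partial B_l}$ for every $l$, so the task reduces to exhibiting one such $\tilde v$ with $\|\nabla\tilde v\|_{L^2(\Omega)}\lesssim M_0$.

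I would build $\tilde v$ by localization around each sphere at a scale of order $R$. Pick smooth cutoffs $\chi_l$ equal to $1$ on $B(x_l,2R)$ and vanishing outside $B(x_l,4R)$; thanks to \eqref{Rdmin} the supports are pairwise disjoint for $N$ large. Setting
\begin{equation*}
\tilde{v} := \sum_{l=1}^{N}\Bigl(\chi_l\, v \;-\; \mathcal{B}_{l}\bigl[v\cdot\nabla\chi_l\bigr]\Bigr),
\end{equation*}
with $\mathcal B_l$ a Bogovskii operator on the annular support of $\nabla\chi_l$, one checks that $\tilde v$ is divergence-free, decays at infinity, and coincides with $v$ on each $\partial B_l$ (the compatibility condition $\int\nabla\chi_l\cdot v=0$ required by $\mathcal B_l$ follows from the divergence-freeness of each $v_i$ on $B_l$ for $i\neq l$ and from the zero-flux property of $U_l+D_l(x-x_l)$ on the sphere, thanks to $\operatorname{tr}(D_l)=0$). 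By disjointness of supports and the Bogovskii estimate \eqref{bogovski}, the Dirichlet energy of $\tilde v$ splits as a sum of local contributions, each bounded by $\int_{B(x_l,4R)\setminus\overline{B_l}}|\nabla v|^2\,dx+R^{-2}\int_{\{2R\leq|x-x_l|\leq 4R\}}|v|^2\,dx$.

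The core estimate is then obtained by splitting $v=v_l+\sum_{i\neq l}v_i$ with $v_l := U_{x_l,R}[U_l]+A_{x_l,R}[D_l]$. The self-part $v_l$ is handled by the explicit decay bounds \eqref{Stokeslet_maj}--\eqref{decay_rate_2}, which give $\int_{\mathbb{R}^3\setminus\overline{B_l}}|\nabla v_l|^2\lesssim R|U_l|^2+R^3|D_l|^2\lesssim RM_0^2$ and an analogous contribution $RM_0^2$ from the $R^{-2}\int|v_l|^2$ term; summing over $l$ gives $NRM_0^2=r_0M_0^2\lesssim M_0^2$. For the cross terms, in $B(x_l,4R)$ one has $|x-x_i|\geq d_{il}/2$ for every $i\neq l$, so the pointwise size and gradient bounds for $v_i$ reduce the estimates to the discrete sums $\sum_{i\neq l}R/d_{il}^k$, $k=1,2,3$, all of which are uniformly bounded by a constant depending only on $r_0,\bar M,\mathcal{E}$ via the Lemma~\ref{JO}-type arguments already used in Lemma~\ref{cvg_serie}.

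The main obstacle is the choice of localization scale: using the natural scale $d_{\min}^N/4$ of Lemma~\ref{approximation_result} would leave an $N|d_{\min}^N|^3$ factor in the cross-term estimate that the assumptions of Definition~\ref{def_regime} do not control uniformly in $N$. Replacing $d_{\min}^N$ by $R$ in the cutoff eliminates this issue, since the leftover factor becomes $NR^3=r_0^3/N^2\to 0$, but it requires establishing the uniform pointwise bound $|v(x)|\lesssim M_0$ on $B(x_l,4R)$; this bound is precisely where the compatibility \eqref{compatibility} and concentration assumptions \eqref{bound_concentration}--\eqref{hyp1} enter, through the Lemma~\ref{JO}-type uniform control of $R\sum_{i\neq l}1/d_{il}$ and $R\sum_{i\neq l}1/d_{il}^2$.
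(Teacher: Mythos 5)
Your proof is correct, but it follows a genuinely different route from the paper. The paper's proof never invokes the variational principle: it expands the square of the $\dot H^1$ norm directly into diagonal and cross terms, computes the diagonal terms exactly through the drag force formula \eqref{stokeslet_force} (giving $\sum_i 6\pi R|U_i|^2\leq 6\pi r_0\max_i|U_i|^2$), and bounds each cross term by integrating by parts over \emph{all} spheres $\partial B_l$, using the $L^\infty$ decay of the stress and of the stokeslet, the triangle-inequality manipulation \eqref{triangle}, and Lemma \ref{JO} twice; the rotation/strain terms are then "handled analogously''. Your argument instead treats the whole sum at once as the Stokes solution with its own boundary data, and beats it with a competitor localized at scale $R$ with a Bogovskii correction — essentially recycling the extension machinery of Lemma \ref{approximation_result} and Proposition \ref{convergence_reflection}, with the correct observation that the localization scale must be $R$ rather than $d_{\min}^N$ and that the compatibility of the Bogovskii datum follows from $\operatorname{tr}D_l=0$. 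What your route buys is modularity (one construction covers translation, rotation and strain simultaneously, and avoids the double sum over pairs and the $1/(d_{il}d_{jl})$ bookkeeping); what the paper's route buys is that no cutoff/Bogovskii construction is needed and the leading constant $6\pi r_0$ comes out explicitly. One bookkeeping slip to fix: $\sum_{i\neq l}R/d_{il}^3$ is \emph{not} uniformly bounded under Definition \ref{def_regime} (Lemma \ref{JO} with $k=3$ only gives $R\sum_{i\neq l}d_{il}^{-3}\lesssim \mathcal{E}/d_{\min}^N+\log N$); however, in your cross-term estimate the $d_{il}^{-3}$ contribution always appears as $R^3|D_i|/d_{il}^3\leq\bigl(\max_i R|D_i|\bigr)R^2/d_{il}^3$, and $R^2\sum_{i\neq l}d_{il}^{-3}$ \emph{is} uniformly bounded (this is exactly the computation in Lemma \ref{Oseen_Stokeslet_maj}), so the estimate you actually need holds and the proof goes through after restating that claim.
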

\begin{proof}
Considering only the stokeslet expansion we have
\begin{multline}\label{term_calcul}
\left \| \underset{i=1}{\overset{N}{\sum}} U_{x_i,R}[ U_i]   \right \|_{\dot{H}^1(\mathbb{R}^3\setminus \underset{l}{\cup}\overline{B}_l)}^2 =\\  \underset{i=1}{\overset{N}{\sum}}  \left \|U_{x_i,R}[ U_i]   \right \|_{\dot{H}^1(\mathbb{R}^3\setminus \underset{i}{\cup}\overline{B}_i)}^2 + \underset{i=1}{\overset{N}{\sum}} \underset{j \neq i}{\overset{N}{\sum}} \int_{\mathbb{R}^3\setminus \underset{l}{\cup}\overline{B}_l} \nabla U_{x_i,R}[ U_i]: \nabla U_{x_j,R}[ U_j]\,.
\end{multline}
The first term in the right hand side of \eqref{term_calcul} can be computed using the fact that $U_{x_i,R}[ U_i]=U_i$ on $\partial B_i$, $1 \leq i\leq N$ and  formula \eqref{stokeslet_force}
\begin{align*}
\phantom{a}&  \underset{i=1}{\overset{N}{\sum}}  \left \|U_{x_i,R}[ U_i]   \right \|_{\dot{H}^1(\mathbb{R}^3\setminus \underset{l}{\cup}\overline{B}_l)}^2\leq \underset{i=1}{\overset{N}{\sum}} \int_{\mathbb{R}^3\setminus \overline{B}_i} \nabla U_{x_i,R}[ U_i]: \nabla U_{x_i,R}[ U_i]\,,\\
&= - \underset{i=1}{\overset{N}{\sum}}  \int_{\partial B_i} \left[\sigma(U_{x_i,R}[ U_i],P_{x_i,R}[ U_i]) n \right]\cdot U_i \,,\\
&= \underset{i=1}{\overset{N}{\sum}} 6 \pi R |U_i|^2\,,\\
&\leq 6\pi r_0 \left(\underset{1\leq i \leq N}{\max}|U_i|\right)^2\,.
\end{align*}
For the second term in the right hand side of \eqref{term_calcul} we write for all $ i \neq j $
\begin{equation*}
\begin{split}
&\int_{\mathbb{R}^3\setminus \underset{l}{\cup}\overline{B}_l} \nabla U_{x_i,R}[ U_i]: \nabla U_{x_j,R}[ U_j]\\
&= -\underset{l=1}{\overset{N}{\sum}} \int_{\partial B_l}\left[\sigma(U_{x_i,R}[ U_i],P_{x_i,R}[ U_i]) n \right]\cdot U_{x_j,R}[ U_j]\,,\\
&\leq \underset{l=1}{\overset{N}{\sum}} 4\pi R^2 \left\|\sigma(U_{x_i,R}[ U_i],P_{x_i,R}[ U_i]) \right\|_{L^\infty(\partial B_l)} \left\| U_{x_j,R}[ U_j]\right \|_{L^\infty(\partial B_l)}\,,\\
&:= \underset{l=1}{\overset{N}{\sum}} 4\pi R^2 \mathcal{O}_{i,j}^l.
\end{split}
\end{equation*}
According to the decay properties of the stokeslet \eqref{Stokeslet_maj} we have
\begin{eqnarray}\label{formule_decay1}
\left\|\sigma(U_{x_i,R}[ U_i],P_{x_i,R}[U_i]) \right\|_{L^\infty(\partial B_l)} &\lesssim &R\frac{|U_i|}{d_{il}^2} (1-\delta_{il})+ \frac{|U_i|}{R} \delta_{il} \,,\notag\\
\left\|U_{x_j,R}[U_j] \right\|_{L^\infty(\partial B_l)}& \lesssim & \frac{R|U_j|}{d_{jl}}(1-\delta_{jl})+ |U_j| \delta_{jl}\,,
\end{eqnarray}
where $\delta_{ij}$ is the Kronecker symbol. On the other hand, we recall that the triangle inequality $d_{ij} \leq d_{il}+ d_{jl}$ yields for all $ i\neq j \neq l$
\begin{equation}\label{triangle}
\frac{1}{d_{il} d_{jl}} \leq \frac{1}{d_{ij}} \left( \frac{1}{d_{il}}+\frac{1}{d_{jl}}  \right)\,.
\end{equation}
Using \eqref{formule_decay1}, formula \eqref{triangle} twice and Lemma \ref{JO} we obtain for all $ i\neq j$
\begin{align*}
\underset{l=1}{\overset{N}{\sum}} 4\pi R^2 \mathcal{O}_{i,j}^l&=\underset{l\neq i,j}{\sum} 4\pi R^2 \mathcal{O}_{i,j}^l + 4 \pi R^2 \mathcal{O}_{i,j}^i+4 \pi R^2\mathcal{O}_{i,j}^j\,,\\
& \lesssim \underset{l\neq i,j}{\sum}  R^2 \frac{R|U_i|}{d_{il}^2} \frac{R|U_j|}{d_{jl}} +  R^2 \frac{|U_i|}{R}\frac{R|U_j|}{d_{ij}} +  R^2 \frac{R|U_i|}{d_{ij}^2} |U_j|\,,\\
&\lesssim   \frac{R^3}{d_{ij}} \left( \frac{1}{d_{ij}}\left(\underset{l\neq i,j}{\sum}\frac{R}{d_{il}}+ \underset{l\neq i,j}{\sum}\frac{R}{d_{jl}} \right) + \underset{l\neq i,j}{\sum} \frac{R}{d_{il}^2}  \right)|U_j| |U_i| \\
& +R^2  \frac{|U_j||U_i|}{d_{ij}} +  R^3 \frac{|U_i||U_j|}{d_{ij}^2} \,,\\
&\lesssim  \frac{R^3}{d_{ij}} \left( \frac{1}{d_{ij}}\left( \frac{|\lambda^N|^3}{d_{\min}^N} \bar{M}+ \bar{M}^{1/3} \right) +\frac{|\lambda^N|^3}{|d_{\min}^N|^2} \bar{M}+\bar{M}^{2/3}  \right) |U_j| |U_i| \\
& +  R^2  \frac{|U_j||U_i|}{d_{ij}} + R^3 \frac{|U_i||U_j|}{d_{ij}^2} \,,\\
&\lesssim  \frac{R^3}{d_{ij}} \left(  \mathcal{E} \bar{M}+ \frac{\bar{M}^{1/3}r_0^2}{d_{\min}^N}+\bar{M}^{2/3}  \right) |U_j| |U_i| +  R^2  \frac{|U_j||U_i|}{d_{ij}} + R^3 \frac{|U_i||U_j|}{d_{ij}^2} \,,\\
&\lesssim \left[ \frac{R^3}{d_{ij}} \frac{1}{d_{\min}^N} +    \frac{R^2}{d_{ij}} +  \frac{R^3 }{d_{ij}^2}\right] |U_j||U_i| \,, \\
\end{align*}
where we kept only the largest terms using the fact that $d_{\min}^N $ vanishes according to \eqref{compatibility} for $N$ large enough. Hence, the second term in the right hand side of \eqref{term_calcul} yields using Lemma \ref{JO}
\begin{align*}
\phantom{a} & \underset{i=1}{\overset{N}{\sum}} \underset{j \neq i}{\overset{N}{\sum}} \int_{\mathbb{R}^3\setminus \underset{l}{\cup}\overline{B}_l} \nabla U_{x_i,R}[ U_i]: \nabla U_{x_j,R}[ U_j] \lesssim \underset{i=1}{\overset{N}{\sum}} \underset{j \neq i}{\overset{N}{\sum}} \left[ \frac{R^3}{d_{ij}}  \frac{1}{d_{\min}^N} +    \frac{R^2}{d_{ij}} +  \frac{R^3 }{d_{ij}^2}\right] |U_j||U_i|\,, \\
&\lesssim \underset{1\leq i \leq N}{\max}\left(\underset{j \neq i}{\overset{N}{\sum}} \left[ \frac{R^2}{d_{ij}} \frac{1}{d_{\min}^N}+ \frac{R}{d_{ij}} +  \frac{R^2 }{d_{ij}^2}\right] \right) \left(\underset{1\leq i \leq N}{\max}|U_i|\right)^2\,,\\
&\lesssim  \left[ \left( \frac{R}{d_{\min}^N} +1\right)\left(\frac{|\lambda^N|^3}{d_{\min}^N}\bar{M}+\bar{M}^{1/3} \right)+\frac{R|\lambda^N|^3}{|d_{\min}^N|^2} \bar{M}+R \bar{M}^{2/3} \right]\left(\underset{1\leq i \leq N}{\max}|U_i|\right)^2\,,\\
&\lesssim \left(\underset{1\leq i \leq N}{\max}|U_i|\right)^2\,,
\end{align*}
where we used the fact that $\frac{R}{d_{\min}^N} \lesssim1$ thanks to \eqref{Rdmin} and $ \frac{|\lambda^N|^3}{d_{\min}^N} \leq \frac{|\lambda^N|^3}{|d_{\min}^N|^2} d_{\min}^N\lesssim 1 $ according to \eqref{hyp1} and \eqref{compatibility}.  The term involving rotating and straining solutions $A_{x_i,R}[D_i]$ is handled analogously. 
\end{proof}
Since the series $\underset{p=0}{\overset{k}{\sum}} V_i^{(p)}$, $\underset{p=0}{\overset{k}{\sum}} \nabla_i^{(p)}$ are convergent, we denote their limit by
\begin{eqnarray*}
V_i^\infty := \underset{p=0}{\overset{\infty}{\sum}} V_i^{(p)} ,& \nabla_i^\infty:= \underset{p=0}{\overset{\infty}{\sum}} \nabla_i^{(p)}. \end{eqnarray*}
Thanks to the linearity of the Stokes equation and Proposition \ref{conv_expansion}, the expansion term 
$$
\underset{i=1}{\overset{N}{\sum}} \left ( U_{x_i,R} \left [\underset{p=0}{\overset{k}{\sum}}V_i^{(p)} \right] + A_{x_i,R} \left[\underset{p=0}{\overset{k}{\sum}} \nabla_i^{(p)}  \right] \right ),
$$
converges in $\dot{H}^1(\mathbb{R}^3 \setminus \underset{l}{\cup}\overline{B}_l)$ uniformly in $N$ to the expansion where we replace the series by their limit. This shows that the error term 
$U[u_*^{(k)}]$ has a limit when $k \to \infty$. In order to quantify the error term, we begin by the following estimate
\begin{proposition}\label{estimation_RM}
For all $k\geq1$ we set
$$
\eta^{(k)} :=  \underset{j}{\max} |V_j^{(k)}| + R\  \underset{j}{\max} |\nabla_j^{(k)}|.
$$
Under the same assumptions as Lemma \ref{cvg_serie}, there exists $N(r_0,\bar{M},\mathcal{E})\in \mathbb{N}^*$ such that for all $N\geq N(r_0,\bar{M},\mathcal{E})$ and $1\leq i \leq N$  
\begin{eqnarray*}
 \| \nabla^2 u_*^{(k+1)} \|_{L^\infty( B_i)} &\lesssim&   \left( 1+\frac{|\lambda^N|^3}{|d_{\min}^N|^3}+ |\log (\bar{M}^{1/3} \lambda^N)| \right) \underset{i}{\max}( |V_i|+ R |\Omega_i|),\\
 \|\nabla u_*^{(k+1)}\|_{L^\infty(B_i)} &\lesssim & R  \| \nabla^2 u_*^{(k+1)} \|_{L^\infty( B_i)} + \eta^{(k)},\\
 \| u_*^{(k)}\|_{L^\infty(B_i)} &\lesssim &R^2 \| \nabla^2 u_*^{(k+1)} \|_{L^\infty( B_i)} +  \eta^{(k)}.
\end{eqnarray*}
\end{proposition}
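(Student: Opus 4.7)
The three estimates will follow from iterating the one-step recurrence \eqref{formule2} and then applying Taylor's formula on each ball $B_i$. The key observation is that on $B_i$ we have $u_*^{(0)}(x) = V_i + \Omega_i \times (x-x_i)$ together with $(V_i^{(0)}, \nabla_i^{(0)}) = (V_i, \Omega_i)$, so the $p=0$ affine contribution telescopes in the unrolling of \eqref{formule2} and one obtains, for $x \in B_i$,
$$u_*^{(k+1)}(x) = -\sum_{p=1}^{k}\bigl(V_i^{(p)} + \nabla_i^{(p)}(x-x_i)\bigr) - \sum_{p=0}^{k}\sum_{j \neq i}\bigl(U_{x_j,R}[V_j^{(p)}](x) + A_{x_j,R}[\nabla_j^{(p)}](x)\bigr).$$
The first double sum is affine in $x$ and thus disappears under $\nabla^2$, leaving
$$\nabla^2 u_*^{(k+1)}(x) = -\sum_{p=0}^{k}\sum_{j \neq i}\bigl(\nabla^2 U_{x_j,R}[V_j^{(p)}](x) + \nabla^2 A_{x_j,R}[\nabla_j^{(p)}](x)\bigr).$$

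For the first inequality, I will combine the pointwise bounds \eqref{Stokeslet_maj_2} and \eqref{decay_rate_2}, which give $|\nabla^2 U_{x_j,R}[V_j^{(p)}](x)| \lesssim R|V_j^{(p)}|/d_{ij}^3$ and $|\nabla^2 A_{x_j,R}[\nabla_j^{(p)}](x)| \lesssim R^3|\nabla_j^{(p)}|/d_{ij}^4$ for $x \in B_i$ — using $|x-x_j| \geq d_{ij}/2$ thanks to \eqref{Rdmin} — with Lemma \ref{JO}. The borderline summation $\sum_{j\neq i} 1/d_{ij}^3$ is precisely what produces the logarithm, and together with the $k=4$ contribution this yields
$$\sum_{j\neq i} \Bigl(\frac{R}{d_{ij}^3} + \frac{R^2}{d_{ij}^4}\Bigr) \lesssim 1 + \frac{|\lambda^N|^3}{|d_{\min}^N|^3} + |\log(\bar{M}^{1/3}\lambda^N)|.$$
Bounding $|V_j^{(p)}| + R|\nabla_j^{(p)}| \leq \eta^{(p)}$ and absorbing the $p$-sum via the geometric decay $\eta^{(p)} \leq K^p \eta^{(0)}$ provided by Lemma \ref{cvg_serie} gives the first estimate, with $\eta^{(0)} = \max_i(|V_i| + R|\Omega_i|)$ as the overall multiplicative factor.

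The two remaining inequalities are Taylor expansions on $B_i$: for any smooth $f$, one has $\|\nabla f\|_{L^\infty(B_i)} \leq |\nabla f(x_i)| + R\|\nabla^2 f\|_{L^\infty(B_i)}$ and $\|f\|_{L^\infty(B_i)} \leq |f(x_i)| + R|\nabla f(x_i)| + (R^2/2)\|\nabla^2 f\|_{L^\infty(B_i)}$. Applied to $u_*^{(k+1)}$, the pointwise data at $x_i$ are exactly $V_i^{(k+1)}$ and $\nabla_i^{(k+1)}$; inserting the representation \eqref{formule_nabla} and reusing the decay estimates \eqref{Stokeslet_maj}, \eqref{decay_rate} together with Lemma \ref{JO} for indices at most $2$ (exactly as in the proof of Lemma \ref{cvg_serie}) yields $|V_i^{(k+1)}| + R|\nabla_i^{(k+1)}| \lesssim \eta^{(k)}$. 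This directly gives the second inequality together with the $(k+1)$-analog $\|u_*^{(k+1)}\|_{L^\infty(B_i)} \lesssim \eta^{(k)} + R^2\|\nabla^2 u_*^{(k+1)}\|_{L^\infty(B_i)}$ of the third. Finally, the third inequality as stated follows by substituting the one-step identity $u_*^{(k)}(x) = u_*^{(k+1)}(x) + V_i^{(k)} + \nabla_i^{(k)}(x-x_i) + \sum_{j\neq i}(U_{x_j,R}[V_j^{(k)}](x) + A_{x_j,R}[\nabla_j^{(k)}](x))$ on $B_i$, controlling the tail sum by $C\eta^{(k)}$ via Lemma \ref{JO} with indices at most $2$, and combining with the just-established bound for $u_*^{(k+1)}$.

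The main delicate point will be the borderline summation that produces the logarithm: extracting the precise form $|\log(\bar{M}^{1/3}\lambda^N)|$ rather than a cruder expression relies on the sharp version of Lemma \ref{JO} for the critical exponent $k=3$. Everything else is bookkeeping, the crucial structural input being the telescoping identity above that eliminates the affine part before differentiating twice, so that the right-hand side of the first estimate depends only on $\eta^{(0)}$ and not on $k$.
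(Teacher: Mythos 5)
Your proof is correct and is essentially the paper's own argument in unrolled form: your telescoped identity is exactly the one-step recursion \eqref{formule2} iterated (using $\nabla^2 u_*^{(0)}=0$ on $B_i$), and the remaining bounds rely on the same decay estimates \eqref{Stokeslet_maj}--\eqref{decay_rate_2}, Lemma \ref{JO} with the critical exponent $3$ producing the logarithm, the contraction of Lemma \ref{cvg_serie} to sum the geometric series, and Taylor expansions on $B_i$. Two small points to tighten when writing it out: Lemma \ref{JO} has no $k=4$ case, so the term $\sum_{j\neq i} R^2/d_{ij}^4$ should be dominated by $(R/d_{\min}^N)\sum_{j\neq i} R/d_{ij}^3$ together with \eqref{Rdmin} (as the paper does); and for the second inequality you need the individual bound $|\nabla_i^{(k+1)}|\lesssim \eta^{(k)}$, not merely $R|\nabla_i^{(k+1)}|\lesssim \eta^{(k)}$ --- this does hold, since the computation of Lemma \ref{cvg_serie} that you invoke gives $|\nabla_i^{(k+1)}|\lesssim \eta^{(k)}\sum_{j\neq i}R/d_{ij}^2\lesssim\eta^{(k)}$ by \eqref{hyp1}.
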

\begin{proof}
\text{} \\
\textbf{1. Estimate of $\| \nabla^2 u_*^{(k+1)} \|_{L^\infty(B_i)}$} \\
Let $x \in  B_i$, using formula \eqref{Rdmin} we recall that for $i\neq j $
$$
|x-x_j| \geq |x_i-x_j| - |x-x_i| \geq \frac{1}{2} d_{ij}.
$$
Applying this, formula \eqref{Rdmin}, the decay properties of the second gradient of single particle solutions \eqref{Stokeslet_maj_2}, \eqref{decay_rate_2} and the iteration formula \eqref{formule2} together with Lemma \ref{JO} for $k=3$ yields  
\begin{equation*}
\begin{split}
&|\nabla^2 u_*^{(k+1)}(x)|\\
&\leq | \nabla^2 u_*^{(k)}(x)| + \underset{j\neq i }{{\sum}}  |\nabla ^2 U_{x_j,R}[V_j^{(k)}](x)| + |\nabla^2 A_{x_j,R}[\nabla_j^{(k)}](x) | \,,\\
& \lesssim \| \nabla^2 u_*^{(k)} \|_{L^\infty(  B_i)} + \underset{j\neq i }{{\sum}} \frac{|V_j^{(k)}|}{d_{ij}^3}R+ \frac{|\nabla_j^{(k)}|}{d_{ij}^4}R^3\,, \\
& \lesssim  \| \nabla^2 u_*^{(k)} \|_{L^\infty(  B_i)} + \left ( \underset{j\neq i}{\sum} \frac{R}{d_{ij}^3} + \frac{R}{d_{\min}^N} \underset{j\neq i}{\sum} \frac{R}{d_{ij}^3} \right ) \left( \underset{j}{\max} |V_j^{(k)}| + R \underset{j}{\max} |\nabla_j^{(k)}| \right )\,, \\
&= \| \nabla^2 u_*^{(k)} \|_{L^\infty(  B_i)} + \left (\underset{j\neq i}{\sum} \frac{R}{d_{ij}^3}  \right) \left(1+ \frac{R}{d_{\min}^N} \right)
\eta^{(k)}\,, \\
& \lesssim  \| \nabla^2 u_*^{(k)} \|_{L^\infty(B_i)} + r_0 \bar{M} \left ( \frac{|\lambda^N|^3}{|d_{\min}^N|^3}+ |\log (\bar{M}^{1/3} \lambda^N)|+1
\right)\eta^{(k)},
\end{split}
\end{equation*}
hence, we iterate the formula and use the fact that $\nabla^2 u_*^{(0)}=0$ according to formula \eqref{formule3} to get
$$
 \| \nabla^2 u_*^{(k+1)} \|_{L^\infty( B_i)} \lesssim  \left (1+\frac{|\lambda^N|^3}{|d_{\min}^N|^3}+ |\log (\bar{M}^{1/3} \lambda^N)|\right) \underset{p=0}{\overset{k}{\sum}} \eta^{(p)},
$$
which yields the expected result by applying Lemma \ref{cvg_serie}.\\
\textbf{2. Estimate of $\| \nabla u_*^{(k+1)} \|_{L^\infty(B_i)}$}
\text{} \\
Let $x \in  B_i$, again, the decay properties of the gradient of the special solutions \eqref{Stokeslet_maj}, \eqref{decay_rate}, formula \eqref{formule2} and Lemma \ref{JO} yields 
\begin{equation*}
\begin{split}
&|\nabla u_*^{(k+1)}(x)|\\
& \leq |\nabla u_*^{(k)}(x) - \nabla u_*^{(k)}(x_i)| +  \underset{j\neq i }{{\sum}}  |\nabla  U_{x_j,R}[V_j^{(k)}](x_i)| + |\nabla A_{x_j,R}[\nabla_j^{(k)}](x_i) | \,,\\
&\lesssim R \|\nabla^2 u_*^{(k)}\|_{L^\infty(B_i)}+ \underset{j\neq i }{{\sum}} \frac{|V_j^{(k)}|}{d_{ij}^2}R + \frac{|\nabla_j^{(k)}|}{d_{ij}^3}R^3\,, \\
& \lesssim  R \|\nabla^2 u_*^{(k)}\|_{L^\infty(B_i)}+ \left (  \underset{j\neq i}{\sum} \frac{R}{d_{ij}^2} + \frac{R}{d_{\min}^N} \underset{j\neq i}{\sum} \frac{R}{d_{ij}^2} \right ) \left( \underset{j}{\max} |V_j^{(k)}| + R \underset{j}{\max} |\nabla_j^{(k)}| \right )\,, \\
& \lesssim  R \|\nabla^2 u_*^{(k)}\|_{L^\infty(B_i)} + \left( 1 +\frac{R}{d_{\min}^N} \right) \left (  \frac{|\lambda^N|^3}{|d_{\min}^N|^2}\bar{M}+ \bar{M}^{2/3} \right )r_0 \eta^{(k)},
\end{split}
\end{equation*}
again, according to \eqref{Rdmin}, note that for $N$ large enough, $1+\frac{R}{d_{\min}^N} \leq 2
$. We conclude using assumption \eqref{hyp1} to bound the right hand side by $\eta^{(k)}$ up to constants depending on $\bar{M}$, $\mathcal{E}$, $r_0$.\\
\textbf{3. Estimate of $\| u_*^{(k+1)} \|_{L^\infty(B_i)}$}
\text{} \\
Let $x \in  B_i$, again, the decay property \eqref{Stokeslet_maj}, \eqref{decay_rate} and formula \eqref{formule2} yields 
\begin{equation*}
\begin{split}
&|u_*^{(k+1)}(x)| \\
& \leq R^2 \|\nabla^2 u_*^{(k)}\|_{L^\infty(B_i)} +  \underset{j\neq i }{{\sum}}  |  U_{x_j,R}[V_j^{(k)}](x)| + | A_{x_j,R}[\nabla_j^{(k)}](x) |\,, \\
& \lesssim R^2 \|\nabla^2 u_*^{(k)}\|_{L^\infty(B_i)} + \underset{j\neq i }{{\sum}} \frac{|V_j^{(k)}|}{d_{ij}}R + \frac{|\nabla_j^{(k)}|}{d_{ij}^2}R^3\,, \\
& \lesssim R^2 \|\nabla^2 u_*^{(k)}\|_{L^\infty(B_i)}  +r_0 \left ( \frac{|\lambda^N|^3}{d_{\min}^N}\bar{M}+\bar{M}^{1/3}+\frac{R|\lambda^N|^3}{|d_{\min}^N|^2}\bar{M}+R\bar{M}^{2/3}\right ) \eta^{(k)}.
\end{split}
\end{equation*}
Using \eqref{hyp1} and \eqref{compatibility}, the right hand side can be bounded by $\eta^{(k)}$ up to constants depending on $\bar{M}$, $\mathcal{E}$, $r_0$.
\end{proof}
\subsubsection{Approximation result}\label{reflection_method}
We can now state the main result of this Section. 
%Recall that the series $\left(\underset{p=0}{\overset{k}{\sum}} V_i^{(p)} ,\underset{p=0}{\overset{k}{\sum}}\nabla_i^{(p)} \right)_{k \in \mathbb{N}}$ are convergent. Hence using the decay rates of the special solutions \eqref{Stokeslet_maj} and \eqref{decay_rate} one can show that
%\begin{align*}
%\int_{\mathbb{R}^3 \setminus \underset{j}{\bigcup} B_j} \left|  \nabla U_{x_i,R} [V]\right|^2& \leq  \int_{\mathbb{R}^3 \setminus B_i}  \nabla U_{x_i,R} [V] : \nabla U_{x_i,R} [V]\\
%&= \int_{\partial B_i} \sigma(U_{x_i,R} [V],P_{x_i,R} [V]) n \cdot V\\
%&= 6\pi R |V|^2
%\end{align*}
%
% according to formula \eqref{devlopment}, one can use , to obtain the convergence in $\dot{H}^1_\sigma(\mathbb{R}^3)$
%$$ U[u_*^{(k)}] \underset{k \longrightarrow \infty}{\longrightarrow}
%u^N- \underset{i=1}{\overset{N}{\sum}} \left (U_{x_i,R}\left [\underset{p=0}{\overset{\infty}{\sum}} V_i^{(p)} \right] + A_{x_i,R}\left [\underset{p=0}{\overset{\infty}{\sum}} \nabla_i^{(p)} \right] \right),$$
%for $N$ large enough such that Lemma \ref{cvg_serie} holds true. We present now an error estimate:
\begin{proposition}\label{convergence_reflection}
Assume that there exists $\bar{M}, \mathcal{E}$ and a sequence $(\lambda^N)_{N \in \mathbb{N}^*}$ such that $(X^N)_{N\in \mathbb{N}^*} \in \mathcal{X}(\bar{M}, \mathcal{E})$. Assume moreover that $\bar{M}^{1/3} r_0$ is small enough.\\
There exists a positive constant $C=C({r}_0,\bar{M}, \mathcal{E})$ and $N(r_0,\bar{M},\mathcal{E})\in \mathbb{N}^*$ satisfying for all $N \geq N(r_0,\bar{M},\mathcal{E})$
$$
\underset{k\to \infty}{{\lim}} \| \nabla U[u_*^{(k+1)}]\|_{L^2(\mathbb{R}^3 \setminus \bigcup B_i)} \leq C  R \, \underset{i}{\max}\,( |V_i|+ R |\Omega_i|), 
$$
\end{proposition}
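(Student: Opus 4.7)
The plan is to use the variational characterization of $U[u_*^{(k+1)}]$ as the divergence-free extension of $u_*^{(k+1)}|_{\bigcup_i B_i}$ minimizing the Dirichlet energy, and to exhibit one explicit admissible competitor $\tilde v^{(k+1)}$ with controlled gradient. In contrast to the truncation-plus-Bogovskii construction of Lemma~\ref{approximation_result}, the competitor is localized at the \emph{particle scale} $R$ rather than at the minimal-distance scale $d_{\min}^N$, which is essential to capture the Taylor-type smallness of $u_*^{(k+1)}$ near each particle.

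\textbf{Step 1 (extension of the boundary data).} Using the recursion \eqref{formule2}, each $u_*^{(k+1)}|_{B_i}$ extends to a divergence-free field on the larger ball $B(x_i,2R)$: every stokeslet/rotlet appearing on the right-hand side is well defined there as soon as $4R<d_{\min}^N$, which holds for $N$ large by \eqref{Rdmin}. Moreover the pointwise bounds of Proposition~\ref{estimation_RM} extend to $L^\infty(B(x_i,2R))$ with the same proof, since every $x\in B(x_i,2R)$ still satisfies $|x-x_j|\geq d_{ij}/2$ for $j\neq i$.

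\textbf{Step 2 (competitor).} Fix $\chi\in\mathcal{C}^\infty_c(\mathbb{R}^3)$ with $\chi\equiv 1$ on $B(0,1)$, $\chi\equiv 0$ outside $B(0,2)$, and set $\chi_i(x):=\chi((x-x_i)/R)$. Define
\[
\tilde v^{(k+1)}:=\sum_{i=1}^N\Bigl(\chi_i\, u_*^{(k+1)}-\mathcal{B}_{x_i,R,2R}\bigl[\nabla \chi_i\cdot u_*^{(k+1)}\bigr]\Bigr),
\]
with $\mathcal{B}_{x_i,R,2R}$ the Bogovskii operator on the annulus $A_i:=B(x_i,2R)\setminus\overline{B_i}$, whose norm is uniform in $R$ by scaling (fixed aspect ratio). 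The mean-zero hypothesis for the Bogovskii argument follows from $\div u_*^{(k+1)}=0$ via the divergence theorem on $B_i$. The supports of the $\chi_i$ are pairwise disjoint for $N$ large (by \eqref{Rdmin}) and $\chi_i\equiv 1$ on $B_i$, so $\tilde v^{(k+1)}$ is divergence-free on $\mathbb{R}^3$ and agrees with $u_*^{(k+1)}$ on each $B_i$; hence $\|\nabla U[u_*^{(k+1)}]\|_{L^2}\leq \|\nabla \tilde v^{(k+1)}\|_{L^2}$ by the variational principle.

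\textbf{Step 3 (gradient estimate).} The standard truncation identity and the Bogovskii estimate give
\[
\|\nabla \tilde v^{(k+1)}\|_{L^2(A_i)}^2\lesssim \int_{B(x_i,2R)}|\nabla u_*^{(k+1)}|^2+\frac{1}{R^2}\int_{A_i}|u_*^{(k+1)}|^2.
\]
A Taylor expansion of $u_*^{(k+1)}$ around $x_i$ combined with Proposition~\ref{estimation_RM} yields on $B(x_i,2R)$ the controls $|u_*^{(k+1)}|\lesssim \eta^{(k+1)}+R^2\|\nabla^2 u_*^{(k+1)}\|_{L^\infty}$ and $|\nabla u_*^{(k+1)}|\lesssim \eta^{(k+1)}/R+R\|\nabla^2 u_*^{(k+1)}\|_{L^\infty}$. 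Using $|A_i|\sim R^3$ and the disjointness of the supports, summation over $i$ leads to
\[
\|\nabla \tilde v^{(k+1)}\|_{L^2}^2\lesssim NR\,(\eta^{(k+1)})^2+NR^5\,\|\nabla^2 u_*^{(k+1)}\|_{L^\infty}^2.
\]

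\textbf{Step 4 (passage to the limit).} By Lemma~\ref{cvg_serie}, $\eta^{(k+1)}\to 0$ geometrically, killing the first term. For the second, Proposition~\ref{estimation_RM} gives a bound on $\|\nabla^2 u_*^{(k+1)}\|_{L^\infty}$ that is uniform in $k$, whose dominant factor $(|\lambda^N|^3/|d_{\min}^N|^3)^2$ is controlled by $(\mathcal{E}/d_{\min}^N)^2$ via \eqref{hyp1} and then by $\mathcal{E}^3\bar M\,N$ via \eqref{hyp4}. Collecting the powers of $R=r_0/N$ yields $NR^5\|\nabla^2 u_*^{(k+1)}\|_{L^\infty}^2\lesssim R^2 \max_i(|V_i|+R|\Omega_i|)^2$ with constants depending only on $r_0,\bar M,\mathcal E$ for $N$ large enough, which gives the claim after taking square roots.

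\textbf{Main obstacle.} The delicate point is the choice of localization scale. Cutoffs of size $d_{\min}^N$ (as in Lemma~\ref{approximation_result}) would replace $R^{5/2}$ by $|d_{\min}^N|^{5/2}$ in Step 3 and would fail to close under assumption \eqref{hyp1} for minimal distances as low as $N^{-1/2}$. Cutoffs of size $R$ match the scale on which the boundary data actually lives and exploit the Taylor vanishing of the residual $u_*^{(k+1)}-V_i^{(k+1)}-\nabla_i^{(k+1)}(x-x_i)$ on $\partial B_i$, producing the sharp $R\max_i(|V_i|+R|\Omega_i|)$ bound.
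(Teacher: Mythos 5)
Your proposal is correct and follows essentially the same route as the paper: bound $\|\nabla U[u_*^{(k+1)}]\|_{L^2}$ by the variational principle using a competitor localized at the particle scale $R$ (cutoff on $B(x_i,2R)$ with pairwise disjoint supports plus a Bogovskii correction on the annulus $B(x_i,2R)\setminus \overline{B_i}$), then invoke Proposition~\ref{estimation_RM} and Lemma~\ref{cvg_serie} together with the scaling $NR^3=r_0R^2$ and assumptions \eqref{hyp1}, \eqref{hyp4}, \eqref{Rdmin} to absorb the $|\lambda^N|^3/|d_{\min}^N|^3$ and logarithmic factors. The only differences are cosmetic: the paper produces the annulus data by radially extending the trace of $u_*^{(k)}$ from $\partial B_i$ before truncating, whereas you extend $u_*^{(k+1)}$ itself to $B(x_i,2R)$ through the reflection formula and repackage the $L^\infty$ bounds via a Taylor expansion with $\eta^{(k+1)}$ in place of $\eta^{(k)}$, both of which are legitimate.
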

\begin{proof}
The aim is to estimate $\| \nabla U[u_*^{(k+1)}]\|_{L^2(\mathbb{R}^3 \setminus \bigcup B_i)}$. To this end, we construct a suitable extension $E[u_*^{(k+1)}]$ of the boundary conditions of $u_*^{(k+1)}$ and apply the variational principle \eqref{variational_form}. \vspace{1pt}
By construction, $u_*^{(k+1)}$ is regular and well defined on each particle $ B(x_i,R)$. Hence, we construct the extension piecewise in each $B(x_i,2R)$.
Let $1 \leq i\leq N$, for all $x\in B(x_i,2R)$ we set $$v^i(x):=u_1^{(i)}(x)+u_2^{(i)}(x),$$ 
where the first term $u_1^{(i)}$ matches the boundary condition on $B(x_i,R)$ and vanishes outside $B(x_i,2R)$. The second term is the correction needed to get $\div v_i =0$.
In order to obtain an extension of $u_*^{(k)}$ on $B(x_i,2R)$ we set 
\begin{eqnarray*}
u_1^{(i)}(x) & = & u_*^{(k)}\left(x_i+ R \frac{x-x_i}{|x-x_i|}\right) \chi \left(\left|\frac{x-x_i}{R}\right| \right), \text { if $ |x-x_i| \geq R,$} \\
u_1^{(i)}(x) & = & u_*^{(k)}(x), \text { if $ x \in B(x_i,R)$ } ,
\end{eqnarray*}
with $\chi$ a truncation function such that $\chi = 1$ on $[0,1]$ and $\chi=0$ outside $[0,2]$.\\
We have then 
\begin{equation}
\| \nabla u_1^{(i)}\|_{L^\infty(B(x_i,2R))} \leq K_\chi \left (\|\nabla u_*^{(k)}\|_{L^\infty(B(x_i,R))} + \frac{1}{R} \|u_*^{(k)}\|_{L^\infty(B(x_i,R))} \right ).
\end{equation}
In what follows we introduce the notation $A(x,r,R):= B(x,R) \setminus \overline{B(x,r)}$ for $r<R$. 
For the second term we set:
$$u_i^{(2)}= \mathcal{B}_{x_i\,,\, R\,,\,2R}(- \div u_i^{(1)}),$$
where $\mathcal{B}$ is the Bogovskii operator, see \cite[Appendix A Lemma 15 and 16]{Hillairet} for more details. \\
The construction satisfies: 
\begin{itemize}
\item $\text{ supp } u_i^{(2)} \subset A(x_i,R,2R)$ 
\item $ \div v_i= 0$
\item $v_i= u_i^{(1)}=u_*^{(k)}$ on $B(x_i,R)$
\end{itemize}
We set then
$$
E[u_*^{(k+1)}] = \underset{i}{\overset{N}{\sum}} v^i(x)1_{B(x_i,2R)},
$$
and thanks to the variational formulation we have 
\begin{align*}
\| \nabla U[u_*^{(k+1)}]\|_{L^2(\mathbb{R}^3 \setminus \bigcup B_i)}^2 &\leq \| \nabla E[u_*^{(k+1)}]\|_{L^2(\mathbb{R}^3 \setminus \bigcup B_i)}^2 \,,\\
& = \underset{i}{\overset{N}{\sum}}  \|\nabla v_i \|_{L^2(A(x_i,R,2R))}^2, \end{align*}
where we used the fact that the $v_i$ have disjoint support. \\
Thanks to the properties of the Bogovskii operator $\mathcal{B}_{x_i\,,\, R\,,\,2R}$ we get
\begin{align*}
 \|\nabla v_i \|_{L^2(B(x_i,R))}^2 & \lesssim \int_{A(x_i,R,2R)} |\nabla u_1^{(i)}|^2\,, \\
& \lesssim R^3  \|\nabla u_1^{(i)}\|^2_{L^\infty(B(A(x_i,R,2R)))}\,,\\
& \lesssim R^3 \left (  \|\nabla u_*^{(k)}\|_{L^\infty( B_i)}+ \frac{1}{R} \| u_*^{(k)}\|_{L^\infty( B_i)} 
\right )^2.
\end{align*}
Finally
\begin{align*}
\| \nabla U[u_*^{(k+1)}]\|_{L^2(\mathbb{R}^3 \setminus \bigcup B_i)}^2 \lesssim  \underset{i=1}{\overset{N}{\sum}} R^3 \left (  \|\nabla u_*^{(k)}\|_{L^\infty( B_i)}+ \frac{1}{R} \| u_*^{(k)}\|_{L^\infty( B_i)} \right )^2.
\end{align*}
Thanks to Proposition \ref{estimation_RM} we have
\begin{equation*}
\begin{split}
&\|\nabla u_*^{(k)}\|_{L^\infty(B_i)}+ \frac{1}{R} \| u_*\|_{L^\infty( B_i)}\\
 & \lesssim  \underset{i}{\max}(|V_i|+R|\Omega_i|)\,\left(R+ \frac{R |\lambda^N|^3}{|d_{\min}^N|^3}+ R | \log(\bar{M}^{1/3}\lambda^N)| \right)+  \left ( \frac{1}{R}+1 \right) \eta^{(k)}.
\end{split}
\end{equation*}
Since $$\eta^{(k)} \leq K^k \underset{i}{\max}(|V_i|+R|\Omega_i|),$$ with $K<1$ according to Lemma \ref{cvg_serie}, we get
\begin{multline*}
\| \nabla U[u_*^{(k+1)}]\|_{L^2(\mathbb{R}^3 \setminus \bigcup B_i)}^2\\
  \lesssim \underset{i}{\max}(|V_i|+R|\Omega_i|)^2 \,\Big \{ R \left(R+ \frac{R |\lambda^N|^3}{|d_{\min}^N|^3}+ R | \log(\bar{M}^{1/3}\lambda^N)| \right) + \left ( 1+R \right)   K^{k} \Big \}^2.
\end{multline*}
Since $K<1$ for $N$ large enough the second term on the right hand side, which is uniformly bounded with respect to $N$, vanishes when $k\to \infty$. This yields 
\begin{multline*}
\underset{k\to \infty}{\lim} \| \nabla U[u_*^{(k+1)}]\|_{L^2(\mathbb{R}^3 \setminus \bigcup B_i)}\\ \lesssim  R\, \underset{i}{\max}(|V_i|+R|\Omega_i|)\,\left(R+ \frac{R |\lambda^N|^3}{|d_{\min}^N|^3}+ R | \log(\bar{M}^{1/3}\lambda^N)| \right).
\end{multline*}
The second term on the right hand side can be bounded using assumptions \eqref{hyp1}, \eqref{hyp3} and \eqref{Rdmin}
$$
R+ \frac{R |\lambda^N|^3}{|d_{\min}^N|^3}+ R | \log(\bar{M}^{1/3}\lambda^N)|\lesssim R+ \frac{R}{d_{\min}^N} \mathcal{E} +\frac{|\log \bar{M}|+ \log N }{N}\lesssim 1,
$$
Finally we obtain
\begin{equation*}
\underset{k\to \infty}{{\lim}} \| \nabla U[u_*^{(k+1)}]\|_{L^2(\mathbb{R}^3 \setminus \bigcup B_i)} \lesssim R\, \underset{i}{\max}(|V_i|+R|\Omega_i|)\,,
\end{equation*}
which is the desired result.
\end{proof}
%\begin{remark}\label{remarque_hyp_bis}
%In addition of the smallness assumption on $\bar{M}^{1/3} r_0$ and \eqref{bound_concentration}, the only assumptions needed to obtain the convergence of the method of reflections to the unique solution $u^N$ are 
%\begin{eqnarray*}
%\underset{N}{\sup} \left( \frac{R |\lambda^N|^3}{d_{\min}^3}+ R | \log(\bar{M}^{1/3}\lambda^N) \right) < + \infty
%\end{eqnarray*}
%which are less restrictive than assumption \eqref{hyp1}.
%\end{remark}
\begin{remark}\label{u_*_infty}
According to Proposition \ref{estimation_RM} we have for all $1 \leq i \leq N$ 
\begin{align*}
\|u_*^{(k+1)}\|_{L^\infty(B_i)} &\lesssim R^2 \|\nabla^2  u_*^{(k+1)}\|_{L^\infty(B_i)} + \eta^{(k)}\\
& \lesssim \underset{i}{\max}(|V_i|+R|\Omega_i|) \,\left \{ R \left (\frac{R |\lambda^N|^3}{|d_{\min}^N|^3}+ R | \log(\bar{M}^{1/3}\lambda^N)|\right)+  K^{k} \right \}.
\end{align*}
as for the proof of Proposition \ref{convergence_reflection} the second term vanishes when $k \to \infty$ and we obtain 
\begin{equation*}
\underset{k\to \infty}{\overline{\lim}} \|u_*^{(k+1)}\|_{L^\infty(B_i)} \lesssim \underset{i}{\max}(|V_i|+R|\Omega_i|) R.
\end{equation*}
\end{remark}
\subsubsection{Some associated estimates}
We recall that we aim to compute the velocities $(V_i,\Omega_i)$ associated to the unique solution $u^N$ of the Stokes equation: 
\begin{equation*}
\left \{
\begin{array}{rcl}
-\Delta u^N + \nabla p^N & = &0, \\ 
\div u^N & = & 0,
\end{array}
\text { on $\mathbb{R}^3 \setminus \underset{i=1}{\overset{N}{\bigcup}}\overline{B_i}$, } 
\right.
\end{equation*}
completed with the no-slip boundary conditions
\begin{equation*}
\left \{
\begin{array}{rcl}
u^N & = &  V_i +\Omega_i \times (x-x_i), \text{ on $ \partial {B_i}$}, \\
\underset{|x| \to \infty}{\lim} |u^N(x)| & = & 0,
\end{array}
\right.
\end{equation*}
with 
\begin{eqnarray*}
F_i+ mg=0\,,&  T_i= 0 \,,& \forall 1 \leq i \leq N.
\end{eqnarray*}
The method of reflections obtained in this Section helps us to describe the velocity field $u^N$ in terms of explicit flows
$$
u^N = \underset{j=1}{\overset{N}{\sum}} \left ( U_{x_j,R}\left[ V_j^{(\infty)}\right] + A_{x_j,R}\left[ \nabla_j^{(\infty)}\right] \right )+  \underset{k \to \infty}{{\lim}}  U[u_*^{(k)}].
$$
In order to extract a formula for the unknown velocities $(V_i,\Omega_i)$, $1 \leq i \leq N$ we need to compute first the velocities $V_i^{(\infty)}$ and matrices $\nabla_i^{(\infty)}$. Applying the method of reflections and writing the force, torque and stresslet associated to the unique solution $u^N$ in two different ways we get the following result.
\begin{lemma}\label{asymptotic_velocity}
Consider the same assumptions as Proposition \ref{convergence_reflection}.\\
There exists $N(r_0,\bar{M},\mathcal{E}) \in\mathbb{N}^*$ such that for all $N\geq N(r_0,\bar{M},\mathcal{E})$
\begin{eqnarray*}
V_i^\infty =& \kappa g + O\left( \underset{i}{\max}(|V_i|+ R|\Omega_i|) \frac{R}{\sqrt{d_{\min}^N}} \right)\,,&1 \leq i \leq N\,,\\
R\left|\nabla_i^\infty \right|=& O\left( \underset{i}{\max}(|V_i|+ R|\Omega_i|) \frac{R}{\left|d_{\min}^N\right|^{3/2}}\right)\,,&1 \leq i \leq N\,,
\end{eqnarray*}
where $\kappa g$ is defined thanks to formula \eqref{kappag}.
\end{lemma}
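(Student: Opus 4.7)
The plan is to use the representation of $u^N$ provided by the method of reflections (Section \ref{reflection}),
$$u^N = \sum_{j=1}^N \bigl( U_{x_j,R}[V_j^\infty] + A_{x_j,R}[\nabla_j^\infty] \bigr) + v^\infty, \qquad v^\infty := \lim_{k\to\infty} U[u_*^{(k+1)}],$$
and to extract the leading-order behavior of $V_i^\infty$ and $\nabla_i^\infty$ by computing the force, torque and stresslet on each $B_i$ in two different ways and matching. On one side, the physical constraints \eqref{force} together with Proposition \ref{strain_estimate} prescribe $F_i = -mg = -6\pi R\kappa g$, $T_i = 0$ and $|S_i|\lesssim R^3|d_{\min}^N|^{-3/2}\max_j(|V_j|+R|\Omega_j|)$. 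On the other side, I evaluate the same surface integrals directly from the decomposition.

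For the direct evaluation, the key observation is that for every $j\neq i$ the fields $U_{x_j,R}[V_j^\infty]$ and $A_{x_j,R}[\nabla_j^\infty]$ are smooth Stokes solutions in a neighborhood of $\overline{B_i}$ and therefore contribute nothing to those surface integrals. The local ($j=i$) terms contribute $-6\pi R V_i^\infty$, $-8\pi R^3 \omega_i^\infty$ and $-\tfrac{20}{3}\pi R^3 E_i^\infty$, by the single-sphere formulas \eqref{stokeslet_force}, \eqref{rotelet_torque}, \eqref{stresslet_strain} with the sym/antisym splitting $\nabla_i^\infty = E_i^\infty + \omega_i^\infty$. Writing $F_i^{\mathrm{err}}, T_i^{\mathrm{err}}, S_i^{\mathrm{err}}$ for the force, torque and stresslet on $\partial B_i$ generated by the residual $v^\infty$, matching the two sides yields the identities
$$V_i^\infty - \kappa g = \frac{F_i^{\mathrm{err}}}{6\pi R},\qquad \omega_i^\infty = \frac{T_i^{\mathrm{err}}}{8\pi R^3},\qquad E_i^\infty = \frac{3(S_i^{\mathrm{err}}-S_i)}{20\pi R^3},$$
so the lemma reduces to showing $|F_i^{\mathrm{err}}| \lesssim R^2\max_j(|V_j|+R|\Omega_j|)/\sqrt{d_{\min}^N}$ and $|T_i^{\mathrm{err}}|,|S_i^{\mathrm{err}}| \lesssim R^3\max_j(|V_j|+R|\Omega_j|)/|d_{\min}^N|^{3/2}$.

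I would then control these residuals by Lorentz reciprocity against auxiliary Stokes fields supplied by Lemma \ref{approximation_result}. For $F_i^{\mathrm{err}}$, let $w^V$ be the unique Stokes solution on $\mathbb{R}^3\setminus\bigcup_j\overline{B_j}$ with $w^V|_{\partial B_i}=V$ and $w^V|_{\partial B_j}=0$ for $j\neq i$; integration by parts gives $V\cdot F_i^{\mathrm{err}} = -2\int D(v^\infty):D(w^V)$, while Lemma \ref{approximation_result} (with $D=0$) controls $w^V$ by $U_{x_i,R}[V]$, so $\|\nabla w^V\|_{L^2}\lesssim R^{1/2}|V|$. The torque and stresslet errors are handled analogously with rotational or straining boundary data on $\partial B_i$, for which Lemma \ref{approximation_result} (now with $V=0$) supplies $\|\nabla w\|_{L^2}\lesssim R^{3/2}|D|$.

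The main obstacle will be the sharpness of these reciprocity estimates. Pairing $\|\nabla v^\infty\|_{L^2}\lesssim R\max_j(|V_j|+R|\Omega_j|)$ from Proposition \ref{convergence_reflection} with the above test fields via Cauchy--Schwarz only gives $|F_i^{\mathrm{err}}|\lesssim R^{3/2}\max_j(|V_j|+R|\Omega_j|)$, which is larger than the target by a factor $\sqrt{d_{\min}^N/R}$, and analogously for $T_i^{\mathrm{err}},S_i^{\mathrm{err}}$. To recover the sharper rate one has to exploit the second-order vanishing of the boundary data of $v^\infty$: by construction of the iteration \eqref{formule1}--\eqref{formule2} one has $u_*^{(p+1)}(x_i)=0$ and $\nabla u_*^{(p+1)}(x_i)=0$ at every step and hence in the limit, so $v^\infty|_{\partial B_i}$ is a pure quadratic Taylor remainder with $L^\infty$ norm controlled by $R^2\|\nabla^2 u_*^\infty\|_{L^\infty(B_i)}$ via Proposition \ref{estimation_RM}. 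Inserting this finer pointwise information back into the reciprocity pairing, together with the decay estimates \eqref{Stokeslet_maj_2} and \eqref{decay_rate_2} for the Hessians of the test fields and the lattice-sum bound of Lemma \ref{JO}, should provide the missing factor and close the estimates for all three residuals, completing the proof.
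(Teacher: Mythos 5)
Your plan is in essence the paper's own: compute the force, torque and first moment on $\partial B_i$ in two ways using the reflection expansion, and control the residual by testing against auxiliary Stokes fields whose distance to the explicit single-sphere solutions is given by Lemma \ref{approximation_result}. However, as written there are genuine gaps. First, the claim that $u_*^{(p+1)}(x_i)=0$ and $\nabla u_*^{(p+1)}(x_i)=0$ ``at every step'' is false: by \eqref{formule1} these quantities are precisely $V_i^{(p+1)}$ and $\nabla_i^{(p+1)}$, the iterates themselves. The usable fact is not an exact second-order vanishing but the bound $\limsup_k \|u_*^{(k)}\|_{L^\infty(B_i)}\lesssim R\,\max_j(|V_j|+R|\Omega_j|)$ of Remark \ref{u_*_infty}, which comes from Proposition \ref{estimation_RM} together with $\eta^{(k)}\to0$; note that the ``quadratic remainder'' heuristic gains nothing beyond this, since $R^2\|\nabla^2 u_*^{(k)}\|_{L^\infty(B_i)}$ is itself only of size $R\max_j(|V_j|+R|\Omega_j|)$ under \eqref{hyp1}. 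Second, your ``key observation'' that the fields indexed by $j\neq i$ contribute nothing to the surface integrals is correct for the force and the torque but fails for the stresslet: for a Stokes field $(w,q)$ smooth across $\overline{B_i}$ one has $\int_{\partial B_i}(x-x_i)\otimes[\sigma(w,q)n]=\int_{B_i}\sigma(w,q)$, which is nonzero in general (take a pure strain). This extra contribution is of order $R^3\sup_{B_i}|\nabla w|$ per particle and is absorbable into the admissible error after summing with Lemma \ref{JO}, but it must be estimated, not declared zero, and it perturbs your identity for $E_i^\infty$.

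Third, and most importantly, the decisive quantitative step is left as a sketch (``should provide the missing factor''). The Hessian decay estimates \eqref{Stokeslet_maj_2}, \eqref{decay_rate_2} you invoke hold for the explicit fields $U_{a,R}$, $A_{a,R}$, not for your test solution $w^V$, for which only $L^2$ information is available; so to exploit the smallness of the boundary data of $v^\infty$ you must split $w^V=U_{x_i,R}[V]+(w^V-U_{x_i,R}[V])$ (and similarly with $A_{x_i,R}[D]$), bound the pairing with the remainder by Cauchy--Schwarz using Proposition \ref{convergence_reflection} and Lemma \ref{approximation_result} -- this is exactly where the factors $R/\sqrt{d_{\min}^N}$ and $R^3/|d_{\min}^N|^{3/2}$ enter -- and integrate the explicit part by parts onto all the $\partial B_j$, pairing the stress decay \eqref{Stokeslet_maj}, \eqref{decay_rate} with the $R\max_j(|V_j|+R|\Omega_j|)$ bound on the boundary data and summing via Lemma \ref{JO}. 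That is precisely the paper's proof (with the roles of data and test field exchanged). Once this is carried out, your three identities, corrected for the stresslet term above and combined with Proposition \ref{strain_estimate} (which is where the weaker rate $|d_{\min}^N|^{-3/2}$ for $\sym(\nabla_i^\infty)$ originates), do yield the lemma; but in its present form the proposal asserts rather than proves the sharp residual bounds, and two of its supporting claims are incorrect as stated.
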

%\begin{remark}
%The fact that the bound on the symmetric part is less accurate than the skew-symmetric part is not important since we only need it to extract the first order term for the linear and angular velocities.
%\end{remark}
\begin{proof}
For the sake of clarity we fix $i=1$ and the same result holds for all $1\leq i \leq N$.\\ Let $V \in \mathbb{R}^3$, $D$ a trace-free $3 \times 3$ matrix. 

The main idea is to apply an integration by parts with a suitable test function $v\in D_\sigma(\mathbb{R}^3 \setminus \underset{i}{\bigcup} \overline{B_i})$ such that $v = V +D (x-x_1)$ on $\partial B_1$ and $v=0$ on the other $\partial B_j$, $j \neq 1$.
We choose $v$ the unique solution to the Stokes equation:
\begin{equation}
\left \{
\begin{array}{rcl}
-\Delta v + \nabla p & = &0, \\ 
\div v & = & 0,
\end{array}
\text { on $\mathbb{R}^3 \setminus \underset{i=1}{\overset{N}{\bigcup}} \overline{B_i}$, } 
\right.
\end{equation}
completed by the boundary conditions
\begin{equation}
\left \{
\begin{array}{rcl}
v & = &  V+D(x-x_1), \text{ on $ \partial B_1,$} \\
v & = & 0 \text{ on $\partial B_i$, $ i\neq 1,$} \\
\underset{|x| \to \infty}{\lim} |v(x)| & = & 0.
\end{array}
\right.
\end{equation}
We extend $u^N$ and $v$ by their boundary values on all $B_i$, $1\leq i \leq N$. We set $E =\sym(D)$, $\Omega= \asym(D)$. An integration by parts yields
\begin{align}
2 \int_{\mathbb{R}^3\setminus \underset{i}{\bigcup} B_i} D(u^N): D(v)&= -\underset{i}{\sum}\int_{\partial B_i} \left [\sigma (u^N,p^N) n \right ] \cdot v  \,,\nonumber\\ 
& =- \int_{\partial B_1} \left [\sigma (u^N,p^N) n \right ] \cdot (V + \Omega \times (x-x_1) + E (x-x_1))\,,\nonumber \\ 
& = -V \cdot \int_{\partial B_i} \sigma (u^N,p^N) n \\
&- \Omega \cdot \int_{\partial B_i} (x-x_i) \times \left [ \sigma (u^N,p^N) n \right ]\,, \nonumber \\ 
&-  E : \int_{\partial B_i} (x-x_i) \otimes \left [ \sigma (u^N,p^N) n \right ] \,, \nonumber \\ 
&= -V \cdot F_1 - \Omega \cdot T_1-E :S_1, \label{egalite1}
\end{align}
see \eqref{FM} and \eqref{TS} for the definition of the force $F_1$, torque $T_1$ and stresslet $S_1$.
On the other hand, we apply the method of reflections  to get
\begin{multline}\label{egalite2}
\int_{\mathbb{R}^3 \setminus \underset{i}{\bigcup}B_i} D(u^N) : D(v)= \\  \underset{j=1}{\overset{N}{\sum}} \int_{\mathbb{R}^3 \setminus \underset{i}{\bigcup}B_i} (D( U_{x_j,R}[V_j^\infty]) + D(\nabla A_{x_j,R}[\nabla_j^\infty])) : D (v)\\ + \underset{k \to \infty}{{\lim}} \int_{\mathbb{R}^3 \setminus \underset{i}{\bigcup}B_i} D(U[u_*^{k}]) : D(v).
\end{multline}
For the first term we integrate by parts to get for all $1 \leq j \leq N$
\begin{eqnarray*}
2\int_{\mathbb{R}^3 \setminus \underset{i}{\bigcup}B_i}  D (U_{x_j,R}[V_j^\infty]) : D(v) &=& -\underset{i=1}{\overset{N}{\sum}}\int_{\partial B_i} \left [\sigma (U_{x_j,R}[V_j^\infty],P_{x_j,R}[V_j^\infty]) n \right ]\cdot v. \\
2\int_{\mathbb{R}^3 \setminus \underset{i}{\bigcup}B_i}  D (A_{x_j,R}[\nabla_j^\infty]) : D(v) &=& -\underset{i=1}{\overset{N}{\sum}} \int_{\partial B_i} \left [\sigma (A_{x_j,R}[\nabla_j^\infty],P_{x_j,R}[\nabla_j^\infty]) n \right ]\cdot v.
\end{eqnarray*}
Recall that $v$ vanishes on $\partial B_i$, $i\neq 1$ and hence, the sums above are reduced to the first term. Applying \eqref {stresslet_strain} \eqref {rotelet_torque} and \eqref{stokeslet_force} there holds for all $1 \leq j \leq N$
$$
\int_{\partial B_1} \left [\sigma (U_{x_j,R}[V_j^\infty],P_{x_j,R}[V_j^\infty]) n \right ]\cdot v  =  -6 \pi R V_1^\infty \cdot V \delta_{1j} .$$
\begin{multline*}
\int_{\partial B_1} \left [\sigma (A_{x_j,R}[\nabla_j^\infty],P_{x_j,R}[\nabla_j^\infty]) n \right ]\cdot v  = \\  -\pi R^3\left (8  \asym( \nabla_1^\infty)\cdot \Omega + \frac{20}{3} \sym(\nabla_1^\infty) : E \right )\, \delta_{1j},
\end{multline*}
where $\delta_{1j}$ is the Kronecker symbol. \\
For the second term on the right hand side of formula \eqref{egalite2}, we consider $v_1:= U_{x_1,R}[V]+A_{x_1,R}[D]$ and write 
\begin{multline}\label{egalite3}
\int_{\mathbb{R}^3 \setminus \bigcup_i B_i} D U[u_*^{k}] : D(v) =\\ \int_{\mathbb{R}^3 \setminus \bigcup_i B_i} D( U[u_*^{k}]) : D(v_1) + \int_{\mathbb{R}^3 \setminus \bigcup_i B_i} D( U[u_*^{k}] ): D (v-v_1).
\end{multline}
To bound the last term we apply Lemma \ref{approximation_result} and Proposition \ref{convergence_reflection}  
\begin{equation*}
\begin{split}
&\underset{k \to \infty}{{\lim}} \left | \int_{\mathbb{R}^3 \setminus \bigcup_i B_i} D( U[u_*^{k}]) : D(v-v_1) \right |\\
&\lesssim  \underset{i}{\max}(|V_i|+ R[\Omega_i|)R  \left ( \frac{R}{\sqrt{d_{\min}^N}}|V| + \frac{R^3}{|d_{\min}^N|^{3/2}}|D| \right )\,,\\
& \lesssim \frac{R^2}{\sqrt{d_{\min}^N}} (|V|+R|D| )\,\underset{i}{\max}(|V_i|+ R|\Omega_i|).
\end{split}
\end{equation*}
We focus now on the first term on the right hand side of formula \eqref{egalite3}, we have
\begin{align*}
\left | \int_{\mathbb{R}^3 \setminus \underset{i}{\cup} B_i} D U[u_*^{(k)}] : D( v_1) \right | &= \left | \underset{i}{\sum} \int_{\partial B_i}[ \sigma(v_1,p_1) \cdot n] \cdot u_*^{(k)} \right |, \\
& \leq \underset{i}{\sum}4\pi R^2 \|\sigma(v_1,p_1)\|_{L^\infty( B_i)} \|u_*^{(k)}\|_{L^\infty( B_i)} ,
\end{align*}
using the decay properties \eqref{Stokeslet_maj}, \eqref{decay_rate} we have
\begin{eqnarray*}
\|\sigma(v_1,p_1)\|_{L^\infty(B_i)} & \lesssim & \frac{R|V|}{d_{i1}^2} + \frac{R^3}{d_{i1}^3}|D|, \text{ for $i \neq 1$}\,, \\ 
\|\sigma(v_1,p_1)\|_{L^\infty(B_1)}& \lesssim & \frac{|V|}{R} + |D|,
\end{eqnarray*}
hence
\begin{align*}
\left | \int_{\mathbb{R}^3 \setminus \underset{i}{\cup} B_i} D (U[u_*^{(k)}]) : D (v_1) \right | & \lesssim R (|V|+ R |D|) \|u_*^{(k)}\|_{L^\infty(B_1)}\,, \\
&+ R \underset{i\neq 1 }{\sum}\left (\frac{R^2 |V|}{d_{i1}^2}+ \frac{R^4|D|}{d_{i1}^3}  \right )\underset{i}{\max} \|u_*^{(k)}\|_{L^\infty( B_i)}\,,\\
& \lesssim   R (|V|+ R|D|)\, \underset{i}{\max} \|u_*^{(k)}\|_{L^\infty( B_i)}.
\end{align*}
According to Remark \ref{u_*_infty} , we have for all $1\leq i \leq N$
$$\underset{k\to \infty}{\overline{\lim}} \|u_*^{(k)}\|_{L^\infty(B_i)} \lesssim   R\, \underset{i}{\max}(|V_i|+ R|\Omega_i|). $$
Finally we get 
\begin{multline}\label{egalite4}
\underset{k \to \infty}{{\lim}} \left | \int_{\mathbb{R}^3 \setminus \bigcup_i B_i} D( U[u_*^{k}]) : \nabla v_1 \right |+ \left| \int_{\mathbb{R}^3 \setminus \bigcup_i B_i} D (U[u_*^{k}]) : \nabla (v-v_1)\right|\lesssim \\  \underset{i}{\max}(|V_i|+ R|\Omega_i|) \frac{R^2}{\sqrt{d_{\min}^N}}(|V| + R|D|).
\end{multline}
Identifying formula \eqref{egalite1} and \eqref{egalite2} and gathering all the inequalities above we have for all $V\,,\, \Omega \in \mathbb{R}^3$ and symmetric trace-free $3\times 3$ matrix $E$
\begin{multline*}
-V\cdot F_1 - \Omega \cdot T_1 - E : S_1 =
6 \pi R V_1^\infty \cdot V + 8 \pi R^3 \asym(\nabla_1^\infty)\cdot \Omega +  \frac{20}{3}\pi R^3 \sym(\nabla_1^\infty): E \\+ O \left (\underset{i}{\max}(|V_i|+ R|\Omega_i|) \frac{R^2}{\sqrt{d_{\min}^N}} (|V| + R|D|) \right),
\end{multline*}
with $ F_1 + m g=0$, $ T_1 =0$. Note that the value of the stresslet $S_i$, see \eqref{TS} for the definition, is unknown. However, we only need to approximate its value using Proposition \ref{strain_estimate}.
We conclude by identifying the terms involving $V\in \mathbb{R}^3$ to obtain 
$$
V_i^\infty := \underset{p=0}{\overset{\infty}{\sum}} V_i^{(p)} = \frac{m}{6 \pi R} g + O\left(  \underset{i}{\max}(|V_i|+ R|\Omega_i|) \frac{R}{\sqrt{d_{\min}^N}} \right),$$
for the skew-symmetric part we get
\begin{equation*}
R\left|\asym(\nabla_1^\infty) \right| \lesssim \underset{i}{\max}(|V_i|+ R|\Omega_i|) \frac{R}{\sqrt{d_{\min}^N}} \lesssim  \underset{i}{\max}(|V_i|+ R|\Omega_i|) \frac{R}{|d_{\min}^N|^{3/2}} ,
\end{equation*}
and for the symmetric part using Proposition \ref{strain_estimate}
\begin{equation*}
R\left|\sym(\nabla_1^\infty) \right| = O\left( \underset{i}{\max}(|V_i|+ R|\Omega_i|)\frac{R}{|d_{\min}^N|^{3/2}}\right)\,,
\end{equation*}
which concludes the proof.
\end{proof}
\begin{corollary}\label{corol}
Under the same assumptions as Lemma \ref{asymptotic_velocity}, there exists a positive constant $C=C(\kappa |g|)$ and $N(r_0,\bar{M},\mathcal{E})\in \mathbb{N}^*$ such that for all $N\geq N(r_0,\bar{M},\mathcal{E})$ we have 
$$\underset{1\leq i \leq N}{\max}\left( \left|V_i\right|+ R\left |\Omega_i \right|\right) \leq C.
$$
\end{corollary}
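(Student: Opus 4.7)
The strategy is to combine the contraction estimate of Lemma \ref{cvg_serie} with the asymptotic expansion of Lemma \ref{asymptotic_velocity} to obtain a self-improving bound on $M_N := \underset{1\le i\le N}{\max}\bigl(|V_i|+R|\Omega_i|\bigr)$. Set $V_i^{(0)}=V_i$ and $\nabla_i^{(0)}=\Omega_i$ as in \eqref{formule0}, so that
$$V_i = V_i^\infty - \sum_{p\geq 1} V_i^{(p)}, \qquad \Omega_i = \nabla_i^\infty - \sum_{p\geq 1} \nabla_i^{(p)}.$$
Lemma \ref{cvg_serie} gives $\underset{i}{\max}|V_i^{(p)}| + R\underset{i}{\max}|\nabla_i^{(p)}| \leq K^{p} M_N$ with $K<1/2$, so taking maxima and summing the geometric series yields
$$\underset{i}{\max}|V_i - V_i^\infty| + R\underset{i}{\max}|\Omega_i - \nabla_i^\infty| \leq \frac{K}{1-K}\, M_N,$$
with $K/(1-K)<1$ strictly.

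Next, by Lemma \ref{asymptotic_velocity}, for $N\geq N(r_0,\bar M,\mathcal{E})$,
$$\underset{i}{\max}|V_i^\infty| \leq \kappa|g| + C\,\frac{R}{\sqrt{d_{\min}^N}}\, M_N, \qquad R\underset{i}{\max}|\nabla_i^\infty| \leq C\,\frac{R}{|d_{\min}^N|^{3/2}}\, M_N.$$
Using $R=r_0/N$ and the lower bound $d_{\min}^N \geq (\mathcal{E}\bar M)^{-1/2} N^{-1/2}$ from \eqref{hyp4}, both prefactors $R/\sqrt{d_{\min}^N}$ and $R/|d_{\min}^N|^{3/2}$ are $O(N^{-3/4})$ and $O(N^{-1/4})$ respectively; in particular they tend to $0$ as $N\to\infty$. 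Denote their maximum by $\varepsilon_N \to 0$.

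Combining the two estimates via the triangle inequality,
$$M_N \leq \underset{i}{\max}|V_i^\infty| + R\underset{i}{\max}|\nabla_i^\infty| + \frac{K}{1-K}\, M_N \leq \kappa|g| + \Bigl(2C\varepsilon_N + \frac{K}{1-K}\Bigr) M_N.$$
Since $K/(1-K)<1$, choose $N^*$ large enough (depending on $r_0,\bar M,\mathcal{E}$) so that $2C\varepsilon_N + K/(1-K) \leq \theta < 1$ for all $N\geq N^*$. Then $(1-\theta) M_N \leq \kappa|g|$, yielding $M_N \leq \kappa|g|/(1-\theta) =: C(\kappa|g|)$, which is the desired result.

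The only delicate point is that we need a strictly sub-unit contraction constant in the absorption step, which is precisely what the inequality $K<1/2$ in Lemma \ref{cvg_serie} provides; this is why the smallness assumption on $\bar M^{1/3} r_0$ matters here. Once that is in hand, the rest is a purely algebraic absorption argument combined with the fact that the error terms $R/\sqrt{d_{\min}^N}$ and $R/|d_{\min}^N|^{3/2}$ are forced to vanish by \eqref{hyp4}.
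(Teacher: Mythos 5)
Your proof is correct and follows essentially the same route as the paper: decompose $V_i=V_i^{(0)}$ via $V_i^\infty$ and the tail $\sum_{p\geq 1}V_i^{(p)}$, bound the tail geometrically by $\frac{K}{1-K}\max_i(|V_i|+R|\Omega_i|)$ using Lemma \ref{cvg_serie}, control $|V_i^\infty|+R|\nabla_i^\infty|$ by Lemma \ref{asymptotic_velocity}, and absorb using $K/(1-K)<1$ together with the vanishing of $R/|d_{\min}^N|^{3/2}$ from \eqref{hyp4}. No gaps to report.
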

\begin{proof}
recall that $V_i^{(0)} =V_i$, $\nabla_i^{(0)} = \Omega_i$ for all $1\leq i \leq N$, according to Lemma \ref{asymptotic_velocity} and Lemma \ref {cvg_serie} we obtain  for all $1\leq i \leq N$
\begin{align*}
|V_i|+R |\Omega_i|& \leq  | V_i^\infty |+R |\nabla_i^\infty | +\underset{p=1}{\overset{\infty}{\sum}}\left( \left|V_i^{(p)}\right|+ R \left|\nabla_i^{(p)} \right|\right) \,,\\
& \leq |V_i^\infty|+R |\nabla_i^\infty | + K \left( \underset{p=0}{\overset{\infty}{\sum}} K^p \right) \underset{i}{\max}(|V_i|+ R|\Omega_i|) \,, \\
& \lesssim \kappa |g|+ \left( \frac{R}{|d_{\min}^N|^{3/2}}+ \frac{K}{1-K} \right) \underset{i}{\max}(|V_i|+ R|\Omega_i|).
\end{align*}
Hence, according to Lemma \ref{cvg_serie} we have $\frac{K}{1-K} <1$. Moreover, assumption \eqref{hyp4} ensures that
 $$\frac{R}{|d_{\min}^N|^{3/2}}\lesssim  \frac{\mathcal{E}^{3/4} \bar{M}^{3/4}}{N^{1/4}}\,,$$  
which vanishes when $N$ goes to infinity.
\end{proof}
\subsection{Extraction of the first order terms for the velocities $(V_i,\Omega_i)$} 
In order to control the motion of the particles, we want to provide a good approximation of the unknown velocities $(V_i,\Omega_i)$. Thanks to the method of reflections, the velocity field $u^N$ can be approached by a superposition of analytical solutions to a Stokes flow generated by a translating, a rotating and a straining sphere (See Proposition \ref{convergence_reflection}) with the associated velocities $(V_i^\infty, \nabla^\infty_i)$. This allows us to compute the first order terms for $(V_i,\Omega_i)$ applying Lemma \ref{asymptotic_velocity} and Corollary \ref{corol}. Keeping in mind that all the computations are done for a fixed time $t \geq 0$, the main result of this Section is the following Proposition.
\begin{proposition}\label{velocity_formula}
Assume that, for a fixed time, we have the existence of a sequence $(\lambda^N)_{N \in \mathbb{N}^*}$ and two positive constants $\bar{M}, \mathcal{E}$ such that $(X^N)_{N \in \mathbb{N}^*} \in \mathcal{X}(\bar{M}, \mathcal{E})$. Assume moreover that $\bar{M}^{1/3 }r_0 $ is small enough. Then, there exists $N(r_0,\bar{M},\mathcal{E}) \in \mathbb{N}^*$ such that for all $N\geq N(r_0,\bar{M},\mathcal{E})$, for all $1 \leq i \leq N$ we have 
$$
V_i= \kappa g +   6 \pi R \underset{j\neq i}{\overset{N}{\sum}} \Phi(x_i-x_j)\kappa g+ O\left (d_{\min}^N \right ), \:\: R \Omega_i =  O\left ( d_{\min}^N \right ),
$$
\end{proposition}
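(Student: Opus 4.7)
The backbone of the argument is a Smoluchowski-type identity obtained by summing the method-of-reflections recurrence. Starting from \eqref{formule1}--\eqref{formule2} and using the linearity of $U_{x_j,R}$ and $A_{x_j,R}$ in their arguments, summing over $p\geq 0$ yields
\begin{align*}
V_i \;&=\; V_i^\infty \;+\; \sum_{j\neq i}\Bigl(U_{x_j,R}[V_j^\infty](x_i)+A_{x_j,R}[\nabla_j^\infty](x_i)\Bigr),\\
\Omega_i \;&=\; \asym(\nabla_i^\infty) \;+\; \sum_{j\neq i}\asym\Bigl(\nabla U_{x_j,R}[V_j^\infty](x_i)+\nabla A_{x_j,R}[\nabla_j^\infty](x_i)\Bigr).
\end{align*}
This step is justified by Lemma \ref{cvg_serie} (geometric convergence of $V_i^{(p)}$, $\nabla_i^{(p)}$) together with Corollary \ref{corol} (uniform bound on $|V_i|+R|\Omega_i|$).

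Next, I plug the expansion $V_j^\infty=\kappa g + (V_j^\infty-\kappa g)$ from Lemma \ref{asymptotic_velocity} and use the Oseen-tensor representation \eqref{Oseen_Stokeslet} to rewrite $U_{x_j,R}[V_j^\infty](x_i)=6\pi R\,\Phi(x_i-x_j)V_j^\infty-\pi R^3\Delta\Phi(x_i-x_j)V_j^\infty$. Separating the $\kappa g$ part produces the advertised main term $6\pi R\sum_{j\neq i}\Phi(x_i-x_j)\kappa g$, and leaves four residual sums to estimate for $V_i$: (a) $V_i^\infty-\kappa g=O(R/\sqrt{d_{\min}^N})$; (b) $6\pi R\sum_{j\neq i}\Phi(x_i-x_j)(V_j^\infty-\kappa g)$, bounded by $R/\sqrt{d_{\min}^N}$ times $R\sum 1/d_{ij}$; (c) the $\Delta\Phi$ term, of size $R^3\sum 1/d_{ij}^3$; (d) the rotating-straining contribution, of size $R^3\max|\nabla_j^\infty|\sum 1/d_{ij}^2$. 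Lemma \ref{JO} (applied as in the proofs of Lemma \ref{cvg_serie} and Proposition \ref{conv_expansion}) makes each of the sums $R\sum 1/d_{ij}^k$ bounded in terms of $\bar M$ and $\mathcal E$, and assumptions \eqref{hyp1}, \eqref{hyp4}, \eqref{Rdmin} imply that every resulting error is $o(d_{\min}^N)$, since e.g.\ $R/|d_{\min}^N|^{3/2}\lesssim r_0/N^{1/4}$ forces $R/\sqrt{d_{\min}^N}=o(d_{\min}^N)$ and $R^2/|d_{\min}^N|^{3/2}=o(d_{\min}^N)$ in the worst case $d_{\min}^N\sim N^{-1/2}$.

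The estimate for $R\Omega_i$ is the delicate part, because the crude bound $R|\nabla_i^\infty|\lesssim R/|d_{\min}^N|^{3/2}$ from Lemma \ref{asymptotic_velocity} is \emph{not} of order $d_{\min}^N$ in general. The trick is to revisit the identification argument in the proof of Lemma \ref{asymptotic_velocity} with the special test pair $V=E=0$, $D=\Omega$: since the stresslet only feeds the symmetric part, and since $T_i=0$, one obtains the sharper bound $R|\asym(\nabla_i^\infty)|\lesssim R/\sqrt{d_{\min}^N}$, which \emph{is} $o(d_{\min}^N)$ under the assumptions. The two remaining sums in the formula for $\Omega_i$ are then controlled by $R^2\sum 1/d_{ij}^2$ (for the translational contribution) and $R^4\max|\nabla_j^\infty|\sum 1/d_{ij}^3$ (for the rotational/straining one); both are $o(d_{\min}^N)$ by the same Lemma \ref{JO} bookkeeping.

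The main obstacle is precisely the skew-symmetric estimate for $R\Omega_i$: one has to exploit the torque-free condition $T_i=0$ to improve the naive $R/|d_{\min}^N|^{3/2}$ bound to $R/\sqrt{d_{\min}^N}$. The rest is a careful but essentially mechanical size count, each piece being reduced either to $R/\sqrt{d_{\min}^N}$, $R^2/|d_{\min}^N|^{3/2}$, $R^3/|d_{\min}^N|^{5/2}$, or $R^2(|\log \lambda^N|+1)$, all of which are $o(d_{\min}^N)$ under \eqref{hyp1}--\eqref{hyp4} and \eqref{Rdmin} for $N$ large enough depending on $r_0,\bar M,\mathcal E$.
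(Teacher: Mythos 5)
Your proposal is correct and follows essentially the same route as the paper: sum the reflection recurrence, invoke Lemma \ref{asymptotic_velocity} and Corollary \ref{corol}, replace stokeslets by the Oseen tensor via Lemma \ref{Oseen_Stokeslet_maj}, and close the size count with Lemma \ref{JO} under \eqref{hyp1}--\eqref{hyp4} and \eqref{Rdmin}. The sharper skew-symmetric bound $R|\asym(\nabla_i^\infty)|\lesssim R/\sqrt{d_{\min}^N}$ that you flag as the delicate point is exactly what the paper uses too; it is already established (via $T_i=0$) inside the proof of Lemma \ref{asymptotic_velocity}, even though the lemma's statement only records the weaker combined bound.
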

We begin by the following lemma: 
\begin{lemma}\label{Oseen_Stokeslet_maj}
For all trace-free $3 \times 3$ matrices $(D_i)_{1\leq i \leq N}$, for all $W \in \mathbb{R}^{3}$ and  $1\leq i \leq N$ we have
$$
\underset{j\neq i}{\overset{N}{\sum}}\left| 6 \pi R \Phi(x_i-x_j)\, W - U_{x_j,R}[W](x_i) \right| \lesssim R |W|.
$$
$$
 \underset{j\neq i}{\overset{N}{\sum}}\left| A_{x_j,R}[D_j](x_i) \right | \lesssim R  \, \underset{j}{\max}\, R|D_j|.
$$
%$$
% \underset{j\neq i}{\overset{N}{\sum}}\left| A_{x_j,R}[D_j](x_i) \right | \lesssim R \left(\bar{M}^{2/3}+\bar{M} \frac{|\lambda^N|^3}{|d_{\min}^N|^2}\right) \, \underset{j}{\max}\, R|D_j|.
%$$
\end{lemma}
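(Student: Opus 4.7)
The plan is to reduce both estimates to Lemma \ref{JO}-type summations using the explicit formulas and decay estimates from Section 2.

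For the first inequality, I would use the Fax\'en-like identity \eqref{Oseen_Stokeslet}, which reads
\[
U_{x_j,R}[W](x_i) = -\left(\Phi(x_i-x_j) - \tfrac{R^2}{6}\Delta\Phi(x_i-x_j)\right) F_j,
\]
where $F_j = -6\pi R W$ is the drag force exerted by a translating sphere moving at velocity $W$ (see \eqref{stokeslet_force}). Substituting $F_j$ immediately yields
\[
6\pi R\,\Phi(x_i-x_j)\,W - U_{x_j,R}[W](x_i) = \pi R^{3}\,\Delta\Phi(x_i-x_j)\,W,
\]
so the key point is that the leading stokeslet singularity cancels exactly, leaving only the $\Delta\Phi$ correction. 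Since $|\Delta\Phi(x)|\lesssim |x|^{-3}$, summing gives
\[
\sum_{j\neq i}\bigl|6\pi R\,\Phi(x_i-x_j)\,W - U_{x_j,R}[W](x_i)\bigr| \lesssim R^{3}\,|W|\sum_{j\neq i}\frac{1}{d_{ij}^{3}}.
\]
I would then apply Lemma \ref{JO} with $k=3$ (exactly as used in the proof of Proposition \ref{estimation_RM}) to bound $\sum_{j\neq i} R/d_{ij}^{3}$ by a constant depending on $\bar M$, $\mathcal E$, $r_0$ (up to logarithmic terms and the $|\lambda^N|^3/|d_{\min}^N|^3$ factor). The extra factor $R$ then absorbs these terms: indeed $R\,|\lambda^N|^3/|d_{\min}^N|^3 \leq \mathcal{E}\,R/d_{\min}^N$ vanishes by \eqref{Rdmin}, and $R|\log(\bar M^{1/3}\lambda^N)|\sim \log N / N\to 0$. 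This gives $\lesssim R|W|$ as required.

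For the second inequality, I would invoke the decay estimate \eqref{decay_rate}, which gives $|A_{x_j,R}[D_j](x_i)|\lesssim R^{3}|D_j|/d_{ij}^{2}$ for $j\neq i$. Factoring out $R^{2}\max_j R|D_j|$ leaves
\[
\sum_{j\neq i}|A_{x_j,R}[D_j](x_i)| \lesssim R\,\max_j\,R|D_j|\;\sum_{j\neq i}\frac{R}{d_{ij}^{2}},
\]
and Lemma \ref{JO} with $k=2$ bounds the remaining sum by $\tfrac{|\lambda^N|^3}{|d_{\min}^N|^2}\bar M + \bar M^{2/3} \lesssim 1$ thanks to \eqref{hyp1}. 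This yields the desired $R\max_j R|D_j|$ bound.

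Neither step presents a genuine obstacle: both are direct consequences of the explicit decay of the particular Stokes solutions combined with the configurational sum estimates already used throughout Section 3. The only point to be careful about is verifying that the extra $R$ (respectively $R^{2}$) factor is enough to absorb the logarithmic and $|\lambda^N|^3/|d_{\min}^N|^k$ terms produced by Lemma \ref{JO}; this is where assumption \eqref{hyp1} and the scaling $R\sim 1/N$ (through \eqref{Rdmin}) come in.
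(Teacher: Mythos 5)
Your proposal is correct and follows essentially the same route as the paper: it uses the identity \eqref{Oseen_Stokeslet} with $F=-6\pi R W$ to reduce the difference to the $\pi R^3\Delta\Phi$ correction of size $R^3/d_{ij}^3$, sums via Lemma \ref{JO} with $k=3$ and absorbs the $|\lambda^N|^3/|d_{\min}^N|^3$ and logarithmic terms using \eqref{hyp1}, \eqref{hyp3} and \eqref{Rdmin}, and handles the second bound with the decay \eqref{decay_rate} and Lemma \ref{JO} with $k=2$, exactly as in the paper. No gaps.
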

\begin{proof}[Proof of Lemma \ref{Oseen_Stokeslet_maj}]
Thanks to formula \eqref{Oseen_Stokeslet} we have for $i\neq j$
\begin{align*}
U_{x_j,R}[W](x_i)& =  6 \pi R\Phi(x_j-x_i)W + \frac{1}{4} \frac{R^3}{|x_j-x_i|^3} W - \frac{3}{4} R^3 \frac{(x_j-x_i)\cdot W}{|x_j-x_i|^5} (x_j-x_i),
\end{align*}
this yields 
$$
\left |U_{x_j,R}[W](x_i)- 6 \pi R\Phi(x_j-x_i)W  \right | \lesssim  \frac{R^3}{d_{ij}^3}|W|.
$$
Applying Lemma \ref{JO} with $k=3$ yields
\begin{multline*}
\underset{j\neq i}{\overset{N}{\sum}} \left |U_{x_j,R}[W](x_i)- 6 \pi R\Phi(x_j-x_i)W  \right | \lesssim \underset{j\neq i}{\overset{N}{\sum}}\frac{R^3}{d_{ij}^3}|W| \\ \lesssim R r_0\bar{M} \left (\frac{R |\lambda^N|^3}{|d_{\min}^N|^3}+ R (| \log \bar{M}|+ | \log \lambda^N |)\right )  |W|.
\end{multline*}
We have thanks to assumptions \eqref{hyp1}, \eqref{hyp3} and \eqref{Rdmin}
$$
\frac{R |\lambda^N|^3}{|d_{\min}^N|^3}+ R (| \log \bar{M}|+ | \log \lambda^N |) \leq \frac{R}{d_{\min}^N} \mathcal{E} + R| \log \bar{M}| + R \log N \lesssim 1.
$$
Analogously, we obtain the second bound by applying \ref{JO} with $k=2$ this time.
\end{proof}
We can now prove the main result.
\begin{proof}[Proof of Proposition \ref{velocity_formula}]
Let fix $1\leq i \leq N$. According to Lemma \ref{asymptotic_velocity} and Corollary \ref{corol} we have
$$
V_i^\infty = \underset{p=0}{\overset{\infty}{\sum}} V_i^{(p)} = \frac{m}{6 \pi R} g + O\left(\frac{R}{\sqrt{d_{\min}^N}} \right).$$
As $V_i^{(0)}=V_i$ we get 
\begin{align*}
V_i= - \underset{p=1}{\overset{\infty}{\sum}} V_i^{(p)} + \frac{m}{6 \pi R} g + O\left ( \frac{R}{\sqrt{d_{\min}^N}} \right).
\end{align*}
Formula \eqref{formula_reflection} for the velocities $V_j^{(p)}$ yields 
\begin{align*}
V_i& =  \underset{p=1}{\overset{\infty}{\sum}} \underset{j\neq i}{\sum} \left( U_{x_j,R}[V_j^{(p-1)}](x_i)+ A_{x_j,R}[\nabla_j^{(p-1)}](x_i) \right) + \frac{m}{6 \pi R} g + O\left( \frac{R}{\sqrt{d_{\min}^N}} \right)\,,\\
& =  \frac{m}{6 \pi R} g+ \underset{j\neq i}{\sum} \left ( U_{x_j,R}[V_j^{\infty}](x_i)+ A_{x_j,R}[\nabla_j^{\infty}](x_i)  \right) + O\left(\frac{R}{\sqrt{d_{\min}^N}} \right),
\end{align*}
we apply Lemma \ref{Oseen_Stokeslet_maj}, Lemma \ref{asymptotic_velocity} and Corollary \ref{corol} together with \eqref{hyp4} and \eqref{Rdmin} to get: 
\begin{align*}
\underset{j\neq i}{\sum} \left |A_{x_j,R}[\nabla_j^{\infty}](x_i)\right|& \lesssim R \, \underset{j}{\max} R |\nabla_j^\infty|\,, \\
& \lesssim  R \frac{R}{|d_{\min}^N|^{3/2}}\,, \\
& \leq d_{\min}^N \frac{\mathcal{E}^{3/4} \bar{M}^{3/4}}{N^{1/4}}\,,\\
&\lesssim  d_{\min}^N .
\end{align*}
Now, we rewrite the sum as follows: 
\begin{align*}
 \underset{j\neq i}{\sum} U_{x_j,R}[V_j^{\infty}](x_i)& =  \underset{j\neq i}{\sum}  U_{x_j,R}[\kappa g](x_i)+  \underset{j\neq i}{\sum}  U_{x_j,R} \left [V_j^{\infty}-\kappa g \right](x_i),
\end{align*}
and we bound the error term using the decay rate \eqref{Stokeslet_maj}, Lemma \ref{asymptotic_velocity} and Lemma \ref{JO} with $k=1$  
\begin{align*}
\left |\underset{j\neq i}{\sum}  U_{x_j,R} \left [V_j^{\infty}-\kappa g \right](x_i) \right| & \lesssim  \left(\underset{j\neq i}{\sum} \frac{R}{d_{ij}} \right) \underset{j}{\max} \left| V_j^{\infty}-\kappa g\right|\,,\\
&\lesssim  \frac{R}{\sqrt{d_{\min}^N}}\,,\\
&\lesssim  d_{\min}^N.
\end{align*}
We conclude by replacing the stokeslets by the Oseen tensor thanks to Lemma \ref{Oseen_Stokeslet_maj}. Finally we have for all $1 \leq i \leq N$
$$
V_i= \kappa g +   6 \pi R \underset{j\neq i}{\overset{N}{\sum}} \Phi(x_i-x_j)\kappa g+ O\left ( d_{\min}^N\right ).
$$
For the angular velocities we obtain thanks to Lemma \ref{asymptotic_velocity} and formula \eqref{formule_nabla} for $\nabla_1^{(p)}$, $ p\geq 1$
\begin{align*}
R\Omega_1 &= - \underset{p=1}{\overset{\infty}{\sum}} R\asym{\nabla_1^{(p)}} + O\left (\frac{R}{\sqrt{d_{\min}^N}}\right )\,,\\
& = R\asym \left (  \underset{j \neq 1}{\sum} \nabla U_{x_j,R}[V_j^\infty](x_1) + \nabla A_{x_j,R}[ \nabla_j^\infty](x_1) \right )+O\left ( \frac{R}{\sqrt{d_{\min}^N}}\right ).
\end{align*}
As before, using Lemma \ref{asymptotic_velocity} we bound the first term by 
\begin{align*}
\phantom{}& R \left |\underset{j \neq 1}{\sum} \nabla U_{x_j,R}[V_j^\infty](x_1) + \nabla A_{x_j,R}[ \nabla_j^\infty](x_1) \right | \\
& \lesssim R \left(\underset{j\neq 1}{\sum} \frac{R}{d_{1j}^2} + \frac{R^2}{d_{1j}^3} \right ) \underset{j}{\max} (|V_j^\infty|, R | \nabla_j^\infty|) \,, \\
& \lesssim R \left(\underset{j\neq 1}{\sum} \frac{R}{d_{1j}^2} \right )\left (1+\frac{R}{d_{\min}^N} \right) \underset{j}{\max} (|V_j^\infty|, R | \nabla_j^\infty|)\,, \\
&\lesssim Rr_0 \left ( \frac{|\lambda^N|^3}{|d_{\min}^N|^2}\bar{M}+\bar{M}^{2/3} \right)\,,\\
&\lesssim  d_{\min}^N\left(\frac{R|\lambda^N|^3}{|d_{\min}^N|^3}+\bar{M}^{2/3} \right)\,,\\
& \lesssim d_{\min}^N \,,
\end{align*}
where we used the fact that $\frac{R |\lambda^N|^3}{|d_{\min}^N|^3}$ is uniformly bounded according to \eqref{hyp1} and \eqref{Rdmin}.
\end{proof}
\section{Control of the particle distance and concentration}\label{control_particles}
In this Section, we make precise the particle behavior in time. Precisely we want to prove that if initially there exists two positive constants $\bar{M}, \mathcal{E}$ and a sequence $(\lambda^N)_{N \in \mathbb{N}^*}$ such that $(X^N(0))_{N \in \mathbb{N}^*} \in \mathcal{X}(\bar{M}, \mathcal{E})$ (see Definition \ref{def_regime}), then the same holds true for a finite time.
Recall that the initial distribution of particles satisfies: 
\begin{itemize}
\item The minimal distance is at least of order $|\lambda^N|^{3/2}$.
\item The maximal number of particles concentrated in a cube of width $\lambda^N$ satisfies assumption \eqref{bound_concentration}.
\end{itemize}
We aim to show that there exists a small interval of time $[0,T]$ independent of $N$ on which the particle distance and concentration stay at the same order. The idea is to use a Gronwall argument and the computation of the velocities $(V_i)_{1 \leq i \leq N}$ at each fixed time $t\geq 0$.
\subsection{Proof of Theorem \ref{thm1}}
We assume that initially there exists two positive constants $\bar{M}, \mathcal{E}$ and a sequence $(\lambda^N)_{N \in \mathbb{N}^*}$ such that $(X^N(0))_{N \in \mathbb{N}^*} \in \mathcal{X}(\bar{M}, \mathcal{E})$. Let $T>0$ be such that
\begin{equation}\label{continuite}
d_{ij}(t) \geq \frac{1}{2} d_{ij}(0) \:, \forall 1 \leq i \neq j \leq N\,,\, \forall t \in[0,T[.
\end{equation}
This maximal time $T>0$ exists and we aim to prove that it is independent of $N$. As long as $t < T$ we have a control on the particle concentration.
\begin{lemma}[Control of particle concentration $M^N$]\label{lemma2}\label{lemma_control}
As long as $t \in[0,T[$ we have:
$$
M^N(t) \leq 8^4 M^N(0).
$$
\end{lemma}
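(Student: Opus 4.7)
The plan is a direct geometric comparison between the particle cloud at time $t$ and at time $0$, using only the lower bound \eqref{continuite} on pairwise distances. Fix $t\in[0,T[$ and any $x\in\mathbb{R}^3$, and let
\[
I:=\{\,i\in\{1,\dots,N\}\;:\;x_i(t)\in\overline{B_\infty(x,\lambda^N)}\,\}.
\]
I want to show $\#I\leq 8^4\,M^N(0)$, which yields the claim by taking the supremum over $x$.

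The first step is to observe that, for any $i,j\in I$, the $\ell^\infty$ constraint gives $|x_i(t)-x_j(t)|_\infty\leq 2\lambda^N$, hence $d_{ij}(t)\leq 2\sqrt{3}\,\lambda^N$. Applying \eqref{continuite} in the reverse direction, this yields
\[
d_{ij}(0)\;\leq\;2\,d_{ij}(t)\;\leq\;4\sqrt{3}\,\lambda^N,\qquad\forall i,j\in I.
\]
Picking any fixed $i_0\in I$ as a reference, this means that all initial positions $\{x_i(0):i\in I\}$ lie inside the single cube $\overline{B_\infty\bigl(x_{i_0}(0),4\sqrt{3}\,\lambda^N\bigr)}$ (using $|\cdot|_\infty\leq|\cdot|$).

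The second step is a standard covering argument: cover this cube of side $8\sqrt{3}\,\lambda^N$ by translates of $\overline{B_\infty(\,\cdot\,,\lambda^N)}$, which are cubes of side $2\lambda^N$. At most $\lceil 4\sqrt{3}\rceil^3=7^3=343$ such cubes suffice, and by the very definition of $M^N(0)$ each of them contains at most $M^N(0)$ initial positions $x_i(0)$. Hence
\[
\#I\;\leq\;343\,M^N(0)\;\leq\;8^4\,M^N(0),
\]
which is the announced bound.

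There is essentially no analytic obstacle: the only subtlety is keeping the $\ell^\infty$ vs.\ Euclidean conversion factors consistent (a factor $\sqrt{3}$ in dimension $3$), and the crude constant $8^4$ easily accommodates this. Note moreover that the argument does not use the Stokes structure or the velocity expansion at all; it relies exclusively on the geometric assumption \eqref{continuite}, which is where the real work (to be carried out elsewhere via the Gronwall-type estimate on pairwise distances and the velocity formula of Proposition~\ref{velocity_formula}) takes place.
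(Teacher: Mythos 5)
Your proof is correct, and it reaches the stated bound (indeed with the sharper constant $343\le 8^4$). It is, however, organized differently from the paper's argument. The paper does not work with $M^N$ directly: it introduces the particle-centered concentration $L^N_\beta(t)$, proves the equivalence $L^N\le M^N\le 8L^N$ (Lemma \ref{equivalence}) and the rescaling estimate $L^N_{\alpha\beta}\le 8\lceil\alpha\rceil^3L^N_\beta$ (Corollary \ref{scaling_concentration}), then uses \eqref{continuite} to get the inclusion of the time-$t$ neighborhood of radius $\lambda^N/(2\sqrt 3)$ into the time-$0$ neighborhood of radius $\lambda^N$, i.e. $L^N_{1/(2\sqrt3)}(t)\le L^N(0)$, and finally rescales at time $t$ to conclude $M^N(t)\le 8^4M^N(0)$. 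You instead fix a cube $\overline{B_\infty(x,\lambda^N)}$ at time $t$, use \eqref{continuite} in the reverse direction to confine the corresponding initial positions in a single cube of side $8\sqrt3\,\lambda^N$ around $x_{i_0}(0)$, and perform the covering count at time $0$ directly against the definition of $M^N(0)$. Both proofs rest on exactly the same two ingredients (the pairwise-distance preservation and a cube-covering count with the $\sqrt3$ norm conversion), but your version is self-contained, avoids the two appendix lemmas, and gives a slightly better constant; the paper's version factors the combinatorics into reusable statements about $L^N_\beta$. The only cosmetic points to add are the trivial case where the index set $I$ is empty and the remark that \eqref{continuite} is only needed for $i\neq j$.
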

\begin{proof}
We recall the definition of $M^N(t)$
$$
M^N(t) := \underset{x \in \mathbb{R}^3}{\sup} \Big\{ \# \big\{ i \in \{1,\cdots,N\} \text{ such that } x_i(t) \in \overline{B_\infty(x,\lambda^N)}  \big \}\Big\}.
$$
We introduce the following quantity: 
\begin{equation}\label{concentration_bis}
L^N(t):= \underset{i}{\max} \# \left \{j \in \{1,\dots,N\}  \text{ such that } |x_i(t)-x_j(t)|_\infty \leq \lambda^N) \right \}.
\end{equation}
One can show that the two definitions of concentration $L^N(t)$ and $M^N(t)$ are equivalent in the sense that
$$
L^N(t) \leq  M^N(t) \leq 8 L^N(t)
$$
see Lemma \ref{equivalence} for the proof. We also need to introduce the following notation for all $\beta >0$: 
$$
L^N_\beta(t):= \underset{i}{\max} \# \left \{j \in \{1,\dots,N\} \text{ such that } |x_i(t)-x_j(t)|_\infty \leq \beta \lambda^N) \right \},
$$
and
$$
M^N_\beta(t) := \underset{x}{\sup} \Big\{ \# \big\{ i \in \{1,\cdots,N\} \text{ such that } x_i(t) \in \overline{B_\infty(x,{\beta}\lambda^N))}  \big \}\Big\},
$$
with the notation 
\begin{eqnarray*}
M^N_1(t):= M^N(t)\,,&L^N_1(t):= L^N(t).
\end{eqnarray*}
We have for all $\beta>0$ and all $\alpha>1$
$$
L^N_{\alpha \beta} (t) \leq 8 \lceil \alpha \rceil^3 L_\beta^N(t),
$$
where $\lceil \cdot \rceil$ denotes the ceiling function. See Corollary \ref{scaling_concentration} for the proof.

The idea is to show that the concentration $L^N$ is controlled in time and hence, the same applies to $M^N$ according to Lemma \ref{equivalence}. 
Recall that we have for all $t \in[0,T[$
$$
d_{ij}(t) \geq \frac{1}{2} d_{ij}(0).
$$
Now, fix $1 \leq i \leq N$ and consider $j\neq i$ satisfying $|x_i(0)-x_j(0)|_\infty> \lambda^N$, then 
\begin{align*}
|x_i(t)-x_j(t)|_\infty &\geq \frac{1}{\sqrt{3}}|x_i(t)-x_j(t)|\,,\\
& \geq\frac{1}{2\sqrt{3}}|x_i(t)-x_j(0)|\,,\\
& > \frac{\lambda^N}{2\sqrt{3}}.
\end{align*}
Which means that $$j \not\in \left \{ 1 \leq k \leq \, N, \text{ such that } |x_i(t)-x_k(t)| \leq \frac{\lambda^N}{2\sqrt{3}} \right \}.$$ 
We obtain 
\begin{multline} 
\left \{ 1 \leq j \leq N\, , \text{ such that } |x_i(t) -x_j(t)| \leq \frac{\lambda^N}{2\sqrt{3}} \right \}
\\
\subset\left  \{ 1 \leq j \leq N\, , \text{ such that } |x_i(0) -x_j(0)| \leq \lambda^N \right \}.
\end{multline}
Hence taking the maximum over $1\leq i \leq N$ we obtain 
$$
L^N_{\frac{1}{2\sqrt{3}}}(t) \leq L^N(0),
$$
thus, we apply Corollary \ref{scaling_concentration} with $\beta = \frac{1}{2\sqrt{3}}$ and $\alpha=\beta^{-1}=2\sqrt{3}$ to get 
\begin{align*}
L^N(t) \leq 8^3 L^N(0).
\end{align*}
According to Lemma \ref{equivalence}, the equivalence between $M^N$ and $L^N$ yields finally for all $t \in[0,T[$ 
$$
M^N(t) \leq 8^4 M^N(0).
$$
This ends the proof.
\end{proof}
This shows that as long as $t <T$ we have $(X^N(t))_{N \in \mathbb{N}^*} \in \mathcal{X}(8^4 \bar{M}, 4 \mathcal{E})$. This implies the following control.
\begin{proposition}\label{prop1}
Assume that there exists two positive constants $\bar{M}, \mathcal{E}$ and a sequence $(\lambda^N)_{N \in \mathbb{N}^*}$ such that $(X^N)_{N \in \mathbb{N}^*} \in \mathcal{X}(8^4\bar{M}, 4\mathcal{E})$. If $r_0 \bar{M}^{1/3}$ is small enough, there exists $N(r_0,\bar{M},\mathcal{E})$ and a positive constant $C=C(r_0, \bar{M}, \mathcal{E}, \kappa |g|)$ independent of $N$ such that for all $ N \geq N(r_0,\bar{M},\mathcal{E})$, for all $i \neq j$ we have
$$
|V_i-V_j| \leq C d_{ij}.
$$
\end{proposition}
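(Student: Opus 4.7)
The plan is to apply Proposition~\ref{velocity_formula} to both $V_i$ and $V_j$ and control the difference term by term. Writing
$$V_i = \kappa g + 6\pi R \sum_{k \neq i} \Phi(x_i - x_k) \kappa g + O(d_{\min}^N),$$
and subtracting the analogous expression for $V_j$, the constant leading term $\kappa g$ cancels. Splitting each sum as a part over $k \neq i,j$ plus the diagonal term $k = j$ (resp.\ $k = i$) and using that the Oseen tensor $\Phi$ is even, so that $\Phi(x_i - x_j) = \Phi(x_j - x_i)$, the two diagonal contributions also cancel. What remains is
$$V_i - V_j = 6\pi R \sum_{k \neq i, j} \bigl[\Phi(x_i - x_k) - \Phi(x_j - x_k)\bigr] \kappa g + O(d_{\min}^N).$$
Since $d_{ij} \geq d_{\min}^N$ by the very definition of $d_{\min}^N$, the remainder is already $O(d_{ij})$, so the task reduces to bounding the singular sum above by $C\,d_{ij}$.

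The main ingredient is the Lipschitz-like inequality \eqref{lipschitz}, applied with $x = x_i - x_k$ and $y = x_j - x_k$ (both nonzero since $k \neq i, j$):
$$\bigl|\Phi(x_i - x_k) - \Phi(x_j - x_k)\bigr| \leq C \frac{d_{ij}}{\min(d_{ik}^2, d_{jk}^2)} \leq C\, d_{ij} \left(\frac{1}{d_{ik}^2} + \frac{1}{d_{jk}^2}\right),$$
uniformly in $k$. The crucial point is that the $d_{ij}$ factor sits \emph{outside} the sum. Factoring it out reduces everything to controlling $\sum_{k \neq i} R/d_{ik}^2$ and $\sum_{k \neq j} R/d_{jk}^2$, which are exactly the Stokeslet-type sums estimated via Lemma~\ref{JO} (with $k = 2$) in the proof of Lemma~\ref{cvg_serie}. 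Under the present hypotheses $(X^N)_N \in \mathcal{X}(8^4\bar{M}, 4\mathcal{E})$, and in particular \eqref{hyp1} and \eqref{bound_concentration} with the enlarged constants, each of these sums is bounded by a multiple of $r_0\bigl(\mathcal{E}\bar{M} + \bar{M}^{2/3}\bigr)$, uniformly in $i$ and in $N$.

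Putting the two estimates together yields
$$|V_i - V_j| \lesssim \kappa|g|\, d_{ij} \left(\sum_{k \neq i}\frac{R}{d_{ik}^2} + \sum_{k \neq j}\frac{R}{d_{jk}^2}\right) + d_{\min}^N \lesssim d_{ij},$$
with the implicit constant depending only on $r_0$, $\bar{M}$, $\mathcal{E}$ and $\kappa|g|$, which is the claim. I do not foresee any real obstacle: all the structural input has been set up in earlier sections — Proposition~\ref{velocity_formula} provides the asymptotic formula, \eqref{lipschitz} provides the Lipschitz control on $\Phi$, and Lemma~\ref{JO} provides the discrete $\ell^2$-type bound. The only delicate bookkeeping point is the cancellation of the diagonal terms via the parity of $\Phi$, since without it one would be left with the singular quantity $R\,\Phi(x_i - x_j)$ of size $R/d_{ij}$, which is incompatible with a bound of the form $C\,d_{ij}$ when $d_{ij}$ is close to $d_{\min}^N$.
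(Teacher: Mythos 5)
Your proposal is correct and follows essentially the same route as the paper: apply Proposition~\ref{velocity_formula} to both velocities, bound the difference of Oseen terms with the Lipschitz-like estimate \eqref{lipschitz}, control the resulting sums by Lemma~\ref{JO} with $k=2$ together with \eqref{hyp1}, and absorb the $O(d_{\min}^N)$ remainder using $d_{\min}^N \leq d_{ij}$. The cancellation of the diagonal terms via the evenness of $\Phi$, which you spell out, is exactly what the paper uses implicitly when it writes the difference as a sum over $k \neq i,j$.
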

\begin{proof}
For the sake of clarity we fix $i=1$ and $j=2$. The computations below are independent of this choice. Thanks to Proposition \ref{velocity_formula} we obtain :
$$
V_1-V_2 = 6\pi R \underset{i\neq 1,2}{\overset{N}{\sum}} ( \Phi(x_1-x_i) - \Phi(x_2-x_i)) \kappa g + O( d_{\min}^N).
$$
Hence, according to assumption \eqref{hyp1}, formula \eqref{lipschitz} and using Lemma \ref{JO} for $k=2$ we obtain  
\begin{align*}
|V_1-V_2| &\lesssim R \underset{i\neq 1,2}{\overset{N}{\sum}} \left (\frac{1}{d_{1i}^2} + \frac{1}{d_{2i}^2} \right )|x_1-x_2| +O( d_{\min}^N)\,,\\
& \lesssim r_0 \left (\bar{M}\frac{|\lambda^N|^3}{|d_{\min}^N|^2}+\bar{M}^{2/3} \right) |x_1-x_2| + O( d_{\min}^N)\,,\\
&\lesssim  d_{12}.
\end{align*}
We set then $C>0$ the universal constant implicit in the above estimate. 
\end{proof}
We have the following control.
\begin{lemma}[Control of particle distance]\label{lemma1}
For all $1 \leq i\neq j \leq N$, for all $t\in[0,T[$ we have
$$
d_{ij}(t) \geq d_{ij}(0)e^{-Ct}.
$$
\end{lemma}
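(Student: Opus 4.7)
The plan is to apply a Gronwall argument to the scalar function $t \mapsto d_{ij}(t)$, exploiting the Lipschitz-type estimate on the velocities provided by Proposition \ref{prop1}. First, since $\dot{x}_i = V_i$ by \eqref{ode}, squaring and differentiating gives
$$
\frac{d}{dt} d_{ij}^2(t) = 2(x_i(t)-x_j(t))\cdot(V_i(t)-V_j(t)),
$$
and by Cauchy--Schwarz
$$
\left| \frac{d}{dt} d_{ij}(t) \right| \leq |V_i(t) - V_j(t)|,
$$
valid wherever $d_{ij}(t) > 0$ (which holds on $[0,T[$ by the definition of $T$, see \eqref{continuite}).

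Next I want to invoke Proposition \ref{prop1}. For this I must check that its hypotheses are satisfied at every $t \in [0,T[$. The definition \eqref{continuite} of $T$ guarantees $d_{\min}^N(t) \geq \tfrac{1}{2} d_{\min}^N(0)$, which combined with \eqref{hyp1} at time $0$ yields $|\lambda^N|^3/|d_{\min}^N(t)|^2 \leq 4\mathcal{E}$. Lemma \ref{lemma_control} provides the companion bound $M^N(t) \leq 8^4 M^N(0)$, so \eqref{bound_concentration} holds with constant $8^4 \bar{M}$. Thus $(X^N(t))_{N \in \mathbb{N}^*} \in \mathcal{X}(8^4 \bar{M}, 4\mathcal{E})$ for every $t \in [0,T[$, which (together with the smallness of $r_0 \bar{M}^{1/3}$) is exactly what Proposition \ref{prop1} requires. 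Applying it gives a constant $C = C(r_0, \bar{M}, \mathcal{E}, \kappa|g|)$ independent of $N$ and $t$ such that
$$
|V_i(t) - V_j(t)| \leq C\, d_{ij}(t), \qquad \forall\, t \in [0,T[.
$$

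Combining the two previous displays yields the differential inequality
$$
\frac{d}{dt} d_{ij}(t) \geq -C\, d_{ij}(t),
$$
and Gronwall's lemma gives $d_{ij}(t) \geq d_{ij}(0) e^{-Ct}$, as claimed. There is no real obstacle here: all the analytical work has been front-loaded into Proposition \ref{velocity_formula} and Proposition \ref{prop1}, which already deliver the first-order expansion of the velocities and the Lipschitz estimate in terms of the particle distances. The only subtlety is the bookkeeping step of verifying that the enlarged constants $(8^4 \bar{M}, 4\mathcal{E})$ still lie in the regime where Proposition \ref{prop1} is applicable, which is automatic from Lemma \ref{lemma_control} and the definition of $T$.
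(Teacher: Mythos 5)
Your proposal is correct and follows essentially the same route as the paper: verify via \eqref{continuite} and Lemma \ref{lemma_control} that $(X^N(t))_{N\in\mathbb{N}^*}\in\mathcal{X}(8^4\bar{M},4\mathcal{E})$ on $[0,T[$, invoke Proposition \ref{prop1} to get $|V_i-V_j|\leq C\,d_{ij}$, and conclude by Gronwall. The only difference is that you spell out the elementary differentiation of $d_{ij}$ and the bookkeeping of the enlarged constants, which the paper leaves implicit.
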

\begin{proof}
Thanks to \eqref{continuite} and Lemma \ref{lemma2} we have for all $t<T$ that 
$$(X^N(t))_{N\in\mathbb{N}^*} \in \mathcal{X}(8^4 \bar{M}, 4\mathcal{E}).$$
Hence, all computations from Proposition \ref{prop1} hold true up to time $T$. In other words, there exists a positive constant $C=C(r_0, \bar{M}, \mathcal{E},\kappa |g|)$ such that for all indices $1 \leq i\neq j \leq N$ we have 
$$
|V_i(t)-V_j(t)| \leq C \, d_{ij}(t) \, \forall t \in[0,T[,
$$
thus,
\begin{align*}
\frac{d}{dt} d_{ij}(t) &\geq - |V_i(t)-V_j(t)|, \\
& \geq - C \, d_{ij}(t).
\end{align*}
This entails
$$
d_{ij}(t) \geq d_{ij}(0)e^{-Ct}\,,
$$
which is the desired result.
\end{proof}
\subsection*{Conclusion}
Thanks to Lemma \ref{lemma1} and Lemma \ref{lemma_control} we have for all $ 1 \leq i\neq j \leq N$, $t\in[0,T[$ 
\begin{eqnarray*}
d_{ij}(t)&\geq& d_{ij}(0) e^{-Ct}, \\
M^N(t) &\leq & 8^4 M^N(0),
\end{eqnarray*}
%as $M^N(0)$ is uniformly bounded be $\bar{M}$ thanks to assumption \eqref{bound_concentration},
this shows that $T$ is independent of $N$ and is at least of order  $\frac{\log(2)}{C}$ where $C$ depends on $(r_0,\bar{M},\mathcal{E},\kappa|g|)$.
\section{Reminder on Wasserstein distance and analysis of the limiting equation}
In this part we recall some important results of existence, uniqueness, regularity and stability concerning the mean-field equation \eqref{mean_field}. We recall also the definition of the Monge-Kantorovich-Wasserstein distance of order one and infinite. We refer to \cite[Part I, chapter 6]{Villani} for definition and properties of the order one distance $W_1$. To define the infinite Wasserstein distance we start with some associated notions. We refer to \cite{Champion} for more details.
\begin{definition}[Transference plan]
Let $\mu\,,\, \nu \in \mathcal{P}(\mathbb{R}^3)$ be two probability measures. The set of transference plans from $\mu$ to $\nu$ denoted $\Pi (\mu\,,\, \nu )$ is the set of all probability measures $\pi \in \mathcal{P}(\mathbb{R}^3 \times \mathbb{R}^3)$ with first marginal $\mu$ and second marginal $\nu$ \textit{i.e.} 
$$
\pi \in \Pi (\mu\,,\, \nu )\Leftrightarrow \int \int_{\mathbb{R}^3 \times \mathbb{R}^3} ( \phi(x)+\psi(y)) \pi(dxdy) = \int_{\mathbb{R}^3} \phi(x) \mu(dx) + \int_{\mathbb{R}^3} \psi(y) \nu(dy),
$$
for all $\phi\,,\, \psi \in \mathcal{C}_b(\mathbb{R}^3)$.
\end{definition}
Recall that for all probability measure  $\lambda \in \mathcal{P}(\mathbb{R}^3 \times \mathbb{R}^3)$ we have
\begin{definition}[Essential supremum]
$$
\lambda - \esssup |x-y| := \inf \{ t \geq 0 \,:\, \lambda (\{(x,y)\in  \mathbb{R}^3 \times \mathbb{R}^3 \,:\, |x-y|> t  \})=0 \}.
$$
\end{definition}
We recall also the definition of the support for a (non-negative) measure.
\begin{definition}[Measure support]
Given $\mu \in \mathcal{P}(\mathbb{R}^3)$ a non-negative measure, then the support of $\mu$ is defined as the set of all points $x$ for which every open neighbourhood of $x$ has positive measure
$$
\supp \mu =\{ x\in \mathbb{R}^3 : \forall \,V \in  \mathcal{V}(x) \,,\, \mu (V) >0 \},
$$
where $\mathcal{V}(x)$ denotes the set of open neighbourhoods of $x$.
\end{definition}
With this definition for the support one can show that there holds
$$
\lambda -  \esssup |x-y| := \sup \{|x-y| \,:\, (x,y) \in \text{ supp } \lambda   \}).
$$
We can now define the infinite Wasserstein distance $W_\infty$:
\begin{definition}[Infinite Wasserstein distance]\label{def_Wasserstein}
The infinite Wasserstein distance between two probability measures $\mu$ and $\nu$ is defined as follows:
$$
W_\infty (\mu\,,\,\nu) = \underset{\pi \in \Pi(\mu\,,\,\nu)}{\inf} \{\pi -  \esssup |x-y| \} .
$$
A transference plan $\pi^* \in \Pi(\mu,\nu)$  satisfying
$$
W_\infty (\mu\,,\,\nu)=\pi^* -  \esssup |x-y|,
$$
 is called an optimal transference plan.
\end{definition}
We recall also the definition of a transport map.
\begin{definition}[Transport map]
Given two probability measures $\mu$ and $\nu$, a transport map $T$  is a measurable mapping  $T : \supp \mu \to \mathbb{R}^3 $ such that  $$ \nu = T_{\#} \mu. $$
\end{definition} 
We emphasize that $T(\mathbb{R}^3) \subset \supp \nu$ $\mu$ - almost everywhere. Indeed 
\begin{align*}
\mu \{ x \in \mathbb{R}^3 : T(x) \notin \text {supp $\nu$ }  \} & = \mu \{ T^{-1}( {}^c \text {supp $\nu$ }) \} \,,\\
& = \nu \{  {}^c \supp \nu\}\,, \\
&=0.
\end{align*}
%in the case where $\text{ supp }\nu$ is measurable which is satisfied if $\nu$ is absolutely continuous with respect to the Lebesgue measure or if $\nu$ is a finite sum of diracs for instance.
\begin{remark}
Note that, for all transport map $T$ from $\mu$ to $\nu$ one may associate a transference plan $(Id,T)\# \mu \in \Pi(\mu\,,\,\nu)$ \textit{i.e.} the pushforward of $\mu$ by the map $x\mapsto (x,T(x))$ and we have 
\begin{align*}
&(Id,T)\# \mu -  \esssup |x-y|\,, \\
&= \inf \{ t \geq 0 \,:\, (Id,T)\# \mu (\{(x,y)\in  \mathbb{R}^3 \times \mathbb{R}^3 \,:\, |x-y|\geq t  \})=0 \}\,,\\
& = \inf \{ t \geq 0 \,:\, \mu ( (Id,T)^{-1}\{(x,y)\in  \mathbb{R}^3 \times \mathbb{R}^3 \,:\, |x-y|\geq t  \})=0 \}\,,\\
& = \inf \{ t \geq 0 \,:\, \mu (\{x\in  \mathbb{R}^3 \,:\, |x-T(x)|\geq t  \})=0 \}\,,\\
&=\mu -  \esssup |x-T(x)|.
\end{align*}
\end{remark}
Note that this yields 
$$
\underset{\pi \in \Pi(\mu\,,\,\nu)}{\inf} \{\pi -  \esssup |x-y| \}  \leq {\inf} \{\mu -  \esssup |T(x)-x|\:,\: T : \supp \mu \to \mathbb{R}^3 \,,\, \nu =T\# \mu \}.
$$
It is then natural to investigate in which conditions one has the existence of a transport map $T$ associated to an optimal transference plan.
 As in \cite{HJ} we refer to \cite{Champion} for the following existence result.
\begin{theorem}[Champion, De Pascale, Juutinen]\label{existence_transport_map}
Assume that $\mu$ is absolutely continuous with respect to the Lebesgue measure. Then there exists optimal transference plans, and at least one of them is given by a transport map $T$. If moreover $\nu$ is a finite sum of Dirac masses, this optimal transport map is unique.
\end{theorem}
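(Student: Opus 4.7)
The plan is to establish the three claims in sequence: existence of an optimal transference plan for $W_\infty$, existence of an optimal plan induced by a transport map when $\mu \ll \mathcal{L}^3$, and uniqueness of this map when $\nu$ is purely atomic.

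For existence, I would apply the direct method. The set $\Pi(\mu,\nu)$ is tight since its marginals $\mu,\nu$ are fixed, hence weakly precompact by Prokhorov's theorem, and it is weakly closed. The functional
\[
J(\pi) := \pi\text{-}\esssup |x-y| = \inf\{t \geq 0 : \pi(\{|x-y| > t\}) = 0\}
\]
is lower semi-continuous with respect to weak convergence of measures: if $\pi_n \rightharpoonup \pi$, then for every open set $U_t = \{(x,y) : |x-y| > t\}$ one has $\pi(U_t) \leq \liminf_n \pi_n(U_t)$, and choosing $t$ slightly above $\liminf_n J(\pi_n)$ yields $J(\pi) \leq \liminf_n J(\pi_n)$. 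Any minimising sequence then admits a weak cluster point realising the infimum.

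For the second claim, I would use the notion of $\infty$-cyclical monotonicity: the support of any optimal plan $\pi^*$ has the property that no finite reshuffling of its points can strictly decrease the maximum distance. Because the $L^\infty$ cost is highly degenerate, the classical Brenier argument does not apply directly, so I would single out a canonical optimal plan by a secondary variational principle, for instance by minimising the quadratic $L^2$ cost among $W_\infty$-optimal plans. Disintegrating this canonical plan with respect to its first marginal $\mu$ gives conditional probabilities $(\pi_x)_{x \in \mathbb{R}^3}$, and the task is to show these are $\mu$-a.e.\ Dirac masses. This is the main obstacle: one must exclude ``fan-like'' configurations in which $\pi_x$ charges several points at the same maximal distance from $x$. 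Absolute continuity of $\mu$ enters here through a Lebesgue-density argument: around a Lebesgue point of the $\mu$-density where the conditional would be non-Dirac, one builds a local rerouting of the plan that respects the marginals, preserves the $L^\infty$ cost, and strictly improves the secondary quadratic cost, contradicting canonicity. The output is a Borel map $T$ with $\pi^* = (\mathrm{id},T)_\# \mu$.

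For uniqueness, assume $\nu = \sum_{k=1}^K \alpha_k \delta_{y_k}$ and let $T_1,T_2$ be two optimal transport maps from $\mu$ to $\nu$. Each induces a measurable partition $\{A_k^{(j)}\}_{k=1}^K$ of $\supp\mu$ with $\mu(A_k^{(j)}) = \alpha_k$ and
\[
\esssup_{x \in A_k^{(j)}} |x - y_k| \leq W_\infty(\mu,\nu),\qquad j=1,2.
\]
The convex combination $\pi := \tfrac{1}{2}\bigl((\mathrm{id},T_1)_\# \mu + (\mathrm{id},T_2)_\# \mu\bigr)$ is again optimal, so by the previous step its conditional probabilities are $\mu$-a.e.\ Dirac. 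Since $\pi_x = \tfrac{1}{2}(\delta_{T_1(x)} + \delta_{T_2(x)})$, this forces $T_1(x) = T_2(x)$ for $\mu$-almost every $x$, which gives the claimed uniqueness.
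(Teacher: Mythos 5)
This statement is not proved in the paper at all: it is quoted verbatim from Champion--De Pascale--Juutinen \cite{Champion}, so the only meaningful comparison is with their original argument. Your first step (tightness of $\Pi(\mu,\nu)$ plus lower semicontinuity of $\pi\mapsto\pi\text{-}\esssup|x-y|$) is fine. The real content of the theorem, however, is your second step, and there the proposal stops exactly where the difficulty begins: you assert that at a Lebesgue point one can ``build a local rerouting of the plan that respects the marginals, preserves the $L^\infty$ cost, and strictly improves the secondary quadratic cost.'' Preserving the $L^\infty$ cost under a swap is precisely what is not automatic for this degenerate cost: exchanging targets $y,y'$ between nearby points $x,x'$ can violate the constraint $|x-y'|\leq W_\infty$, and ruling this out is where the whole machinery of $\infty$-cyclical monotonicity/restrictable plans and the density-point analysis of \cite{Champion} is needed. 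As written this is a plausible programme (secondary variational selection is indeed a known device for $L^\infty$ problems), not a proof of the key claim.

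The uniqueness argument contains a genuine logical error. Step 2, even if completed, only shows that \emph{one specially selected} optimal plan (your secondary minimiser) has $\mu$-a.e.\ Dirac conditionals; it says nothing about an arbitrary optimal plan. The convex combination $\tfrac12\bigl((\mathrm{id},T_1)_{\#}\mu+(\mathrm{id},T_2)_{\#}\mu\bigr)$ is optimal but has no reason to coincide with your canonical plan, so you cannot invoke step 2 to conclude its conditionals are Dirac. In fact the implication ``$W_\infty$-optimal $\Rightarrow$ induced by a map'' is false in general even with $\mu$ absolutely continuous and $\nu$ finitely atomic: take $\mu$ uniform on the unit disc and $\nu=\tfrac12\delta_{y_1}+\tfrac12\delta_{y_2}$ with $y_1,y_2$ close to the centre; the covering constraint fixes $W_\infty$, the mass constraints are slack, and there are many optimal plans (and indeed many optimal maps) obtained by redistributing the overlap region. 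This also shows that the uniqueness assertion cannot be a consequence of optimality alone; in \cite{Champion} it is tied to the particular ($\infty$-cyclically monotone) selection they construct, so any correct proof of the last sentence must argue through that selection rather than through averaging two arbitrary optimal maps.
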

\subsection{Existence, uniqueness and stability for the mean-field equation}\label{Vlasov_existence}
Consider the following problem 
\begin{equation}\label{Vlasov_Stokes}
\left\{
\begin{array}{rcl}
\frac{\partial \rho}{\partial t }+{\div}( ( \kappa g + \mathcal{K}\rho) \rho ) &=& 0\,, \, \\
\rho(0,\cdot) &= & \rho_0 \,,
\end{array}
\right.
\end{equation}
recall the definition of the kernel $\mathcal{K}$  
$$
\mathcal{K} \eta (x) = {6 \pi r_0 \kappa }\int \Phi(x-y)\,g\, \eta(y) dy ,
$$
for all $\eta\in L^\infty (\mathbb{R}^3) \times L^1(\mathbb{R}^3)$.
We refer to the existence and uniqueness result due to H\"ofer \cite[Theorem 9.2]{Hofer} in the case where the initial data $\rho_0$ and its gradient $\nabla \rho_0$ are in the space $X_\beta$ for some $\beta >2$ where
$$
X_\beta := \{ h \in L^\infty(\mathbb{R}^3) \,,\, \|h\|_{X_\beta} < \infty \},
$$
with 
$$
\|h\|_{X\beta} := \underset{x}{\esssup}(1+|x|^\beta)|h(x)|.
$$
\begin{theorem}[H\"ofer]\label{thm_Vlasov}
Assume that $\rho_0$, $\nabla \rho_0 \in X_\beta$ for $\beta>2$. There exists a unique solution $\rho\in W^{1,\infty}((0,T), X_\beta)$ to equation \eqref{Vlasov_Stokes} for all $T>0$ and a unique well defined flow $X$ satisfying 
\begin{equation}\label{flow_X}
\left \{
\begin{array}{rcll}
\partial_s X(s,t,x) & = & \kappa g + \mathcal{K}\rho(s, X(s,t,x)),&\: \: \forall \, s,t \in [0,+\infty[ , \\
X(t,t,x) & = & x,& \: \: \forall \, t \in [0,+\infty[ ,
\end{array}
\right.
\end{equation}
such that 
\begin{equation}\label{formula_rho}
\rho(t,x)= \rho_0(X(0,t,x)) \,,\:\: \forall \, (t,x) \in[0,+\infty[ \times \mathbb{R}^3.
\end{equation}
\end{theorem}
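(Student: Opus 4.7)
This is a classical Cauchy theorem for a nonlinear transport equation whose velocity field depends nonlocally but sufficiently regularly on $\rho$, and I would prove it by a Banach fixed-point argument on the characteristic flow, in the spirit of Dobrushin's well-posedness theory for Vlasov systems. Fix $T>0$ and define a map $\mathcal{F}$ sending a candidate flow $Y(s,t,x)$ on $[0,T]^2\times\mathbb{R}^3$ to the flow of the velocity field $u_Y:=\kappa g+\mathcal{K}\rho_Y$, where $\rho_Y(t,x):=\rho_0(Y(0,t,x))$ is the density transported by $Y$. Because $\Phi$ is the Green's function of the Stokes operator, $\mathcal{K}\rho_Y$ is divergence-free for every $\rho_Y$; hence the flow of $u_Y$ preserves Lebesgue measure, and $\rho_Y$ is automatically a weak solution of the linear transport equation driven by $u_Y$. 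A fixed point $Y^*$ of $\mathcal{F}$ therefore yields a pair $(X,\rho)$ satisfying \eqref{flow_X}--\eqref{formula_rho}.

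The decisive quantitative ingredient is the estimate
\begin{equation*}
\|\mathcal{K}\eta\|_{L^\infty(\mathbb{R}^3)}+\|\nabla\mathcal{K}\eta\|_{L^\infty(\mathbb{R}^3)}\lesssim \|\eta\|_{X_\beta}+\|\nabla\eta\|_{X_\beta},
\end{equation*}
obtained by splitting each convolution integral into a near-field and a far-field piece: near $y=x$ one uses $|\Phi(z)|\lesssim |z|^{-1}\in L^1_{\mathrm{loc}}(\mathbb{R}^3)$ together with $\|\eta\|_\infty\leq\|\eta\|_{X_\beta}$, and at infinity one uses $|\eta(y)|\lesssim (1+|y|^\beta)^{-1}$, which is precisely integrable against $|x-y|^{-1}$ uniformly in $x$ exactly when $\beta>2$. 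For the derivative one invokes the identity $\nabla\mathcal{K}\eta=\mathcal{K}\nabla\eta$, legal by integration by parts thanks to the decay of $\eta$, and repeats the splitting for $\nabla\eta$. Consequently $u_Y$ is globally bounded and Lipschitz with a constant $L$ depending only on $\|\rho_Y\|_{X_\beta}+\|\nabla\rho_Y\|_{X_\beta}$, so $\mathcal{F}(Y)$ is well-defined and satisfies $|\mathcal{F}(Y)(0,t,x)-\mathcal{F}(Y)(0,t,x')|\leq e^{Lt}|x-x'|$; the transported density $\rho_{\mathcal{F}(Y)}$ therefore inherits the $X_\beta$ estimates of $\rho_0$ with a Gronwall-type factor. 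Comparing two iterates $Y_1,Y_2$ via the same pointwise estimate applied to $\mathcal{K}(\rho_{Y_1}-\rho_{Y_2})$ gives a Gronwall contraction
\begin{equation*}
\|\mathcal{F}(Y_1)-\mathcal{F}(Y_2)\|_\infty\lesssim Te^{LT}\|Y_1-Y_2\|_\infty,
\end{equation*}
which is a strict contraction for $T$ small, yielding local existence and uniqueness; iteration on successive short intervals then provides the global-in-time statement in $W^{1,\infty}((0,T);X_\beta)$.

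The main obstacle is the slow decay of the Oseen tensor, $|\Phi(x)|\sim |x|^{-1}$, which prevents $\mathcal{K}\rho$ from itself belonging to $X_\beta$: indeed $\mathcal{K}\rho$ typically decays only like $|x|^{-1}$, not $|x|^{-\beta}$. One therefore cannot close the iteration purely at the $X_\beta$-level for the velocity; it must be formulated on the flow (or equivalently on the $W^{1,\infty}$-norm of the velocity), while $X_\beta$ is used only to propagate the regularity of $\rho$ and $\nabla\rho$ through the representation formula \eqref{formula_rho}. The restriction $\beta>2$ is sharp here: it is exactly the threshold guaranteeing that the far-field integrals defining $\mathcal{K}\rho$ and $\mathcal{K}\nabla\rho$ converge uniformly in the space variable, and hence the one ensuring that the Picard scheme makes sense at all.
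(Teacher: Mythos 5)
The paper does not prove this statement at all: it is quoted as an external result, citing H\"ofer \cite[Theorem 9.2]{Hofer}, so there is no internal proof to compare against. Your sketch follows the same standard route as the cited source — a Picard/Dobrushin fixed point on the characteristic flow, powered by the convolution estimates $\|\mathcal{K}\eta\|_{L^\infty}+\|\nabla\mathcal{K}\eta\|_{L^\infty}\lesssim\|\eta\|_{X_\beta}+\|\nabla\eta\|_{X_\beta}$ for $\beta>2$, propagation of the $X_\beta$ bounds of $\rho$ and $\nabla\rho$ through \eqref{formula_rho} using the boundedness of the velocity and the $e^{Lt}$ bound on $\nabla X$, and iteration in time — and this is consistent with the way the paper itself uses the result (cf.\ Remark \ref{remarque2} on the Lipschitz bound for $\mathcal{K}$). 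Two steps deserve to be made explicit. First, in the contraction estimate the difference $\rho_{Y_1}-\rho_{Y_2}$ must be converted into $\|Y_1-Y_2\|_\infty$: either through $|\rho_0(Y_1)-\rho_0(Y_2)|\le|\nabla\rho_0||Y_1-Y_2|$, where keeping the $X_\beta$ weight under control uses that the characteristics displace points by at most $Ct$, or, equivalently, by writing $\mathcal{K}\rho_{Y_i}(t,x)=6\pi r_0\kappa\int\Phi\bigl(x-Y_i(t,0,z)\bigr)g\,\rho_0(z)\,dz$ (legitimate because the fields are divergence free, so the flows preserve Lebesgue measure) and invoking the Lipschitz-like bound \eqref{lipschitz} for the singular kernel together with the boundedness of $\rho_0$; as stated, "the same pointwise estimate applied to $\mathcal{K}(\rho_{Y_1}-\rho_{Y_2})$" hides exactly this point. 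Second, the fixed point only gives uniqueness of the transported solution; to conclude uniqueness in the class $W^{1,\infty}((0,T),X_\beta)$ claimed in the statement you should add the (standard) remark that any solution in that class produces a bounded Lipschitz velocity field $\kappa g+\mathcal{K}\rho$, hence is itself given by transport along its own characteristics, after which your argument (or the stability estimate of Theorem \ref{stability_Loeper}) identifies it with the fixed point. With these two points filled in, the proposal is a correct proof along essentially the same lines as the reference.
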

\begin{remark}\label{rem2}
The flow $X$ is measure-preserving \textit{i.e.} for a test function $\phi \in \mathcal{C}_b(\mathbb{R}^3)$ we have
$$
\int \phi(y) {\rho}(s,y)dy = \int \phi(X(s,t,y)) \rho(t,y)dy,
$$
for all $s\,,\,t \in[0,T]$. This allows us to separate the dependence of time $s$ in the integral with respect to the measure ${\rho}(t,\cdot)$.
\end{remark}
\begin{remark}\label{remarque2}
Note that for all $ \eta \in L^\infty(\mathbb{R}^3) \cap L^1(\mathbb{R}^3)$,  the velocity field $\mathcal{K} \eta$ is Lipschitz
$$
\left|\mathcal{K}(\eta)(x) - \mathcal{K}(\eta)(y) \right| \lesssim ( \|\eta\|_{L^1}+\|\eta\|_{L^\infty})  \,|x-y|,\,\:\:\forall \, x \neq y \in \mathbb{R}^3.
$$
\end{remark}
Moreover, if one assume that $\rho_0$ is only Lipschitz and compactly supported, then one can show the existence and uniqueness of the solution $\rho$ to equation \eqref{Vlasov_Stokes} in the space $L^\infty((0,T); L^\infty(\mathbb{R}^3) \cap L^1(\mathbb{R}^3))$. The method of proof is related to the stability result due to G. Loeper in \cite{Loeper}  which gives a stability estimate in terms of Wasserstein distance for the Vlasov-Poisson equation. This result is adapted by M. Hauray in \cite[Theorem 3.1]{Hauray} for a more general class of kernels $K$ satisfying a $(C^\alpha)$ condition with $ \alpha<d-1$ where $d$ is the space dimension
\begin{equation}\tag{$C^\alpha$}
\div K =0,\,|K(x)|,\, |x| | \nabla K(x)| < \frac{C}{|x|^\alpha},\,\forall \, x \neq 0 , 
\end{equation}
see \cite{Hauray}. This condition being satisfied by the Oseen tensor $\Phi$ we have the following result.
\begin{theorem}[Hauray-Loeper]\label{stability_Loeper}
Given $T>0$, consider two solutions $\rho_1, \rho_2 \in L^\infty( (0,T), L^\infty(\mathbb{R}^3) \cap L^1(\mathbb{R}^3))$ of equation \eqref{Vlasov_Stokes} associated to two initial data  $\rho_0^1,\rho_0^2 \in L^\infty(\mathbb{R}^3) \cap L^1(\mathbb{R}^3)$. There holds
\begin{equation}
W_1(\rho_1(t,\cdot), \rho_2(t,\cdot)) \leq W_1(\rho_0^1, \rho_0^2)e^{C \max( \|\rho_1^0\|_{L^\infty\cap L^1},\|\rho_2^0\|_{L^\infty\cap L^1} )t}.
\end{equation}
\end{theorem}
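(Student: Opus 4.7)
The plan is to combine the method of characteristics (Theorem~\ref{thm_Vlasov}) with an optimal coupling argument in the spirit of Dobrushin's classical proof of mean-field limits, adapting it to the singular Oseen kernel as in Hauray's condition~$(C^{\alpha})$. Let $X_i(t,0,\cdot)$ denote the flow associated to $\rho_i$, so that $\rho_i(t,\cdot) = X_i(t,0,\cdot)_{\#}\rho_0^i$. Pick $\pi_0 \in \Pi(\rho_0^1,\rho_0^2)$ realizing $W_1(\rho_0^1,\rho_0^2)$, and propagate it by the two flows to obtain
\[
\pi_t := (X_1(t,0,\cdot)\,,\,X_2(t,0,\cdot))_{\#}\,\pi_0 \in \Pi(\rho_1(t,\cdot),\rho_2(t,\cdot)).
\]
Setting $D(t) := \int |u-v|\,d\pi_t(u,v) = \int |X_1(t,0,x)-X_2(t,0,y)|\,d\pi_0(x,y)$, we have $W_1(\rho_1(t),\rho_2(t)) \leq D(t)$, and it suffices to show $D(t) \leq D(0)\,e^{Ct}$.

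Differentiating $D$ under the integral (which is justified by the regularity of the flows from Theorem~\ref{thm_Vlasov} and Remark~\ref{remarque2}) gives
\[
\dot D(t) \leq \int \bigl|\mathcal{K}\rho_1(t,X_1(t,0,x)) - \mathcal{K}\rho_2(t,X_2(t,0,y))\bigr|\,d\pi_0(x,y),
\]
since the common drift $\kappa g$ cancels. I then split the integrand via the triangle inequality into
\[
\bigl|\mathcal{K}\rho_1(u) - \mathcal{K}\rho_1(v)\bigr| \;+\; \bigl|\mathcal{K}(\rho_1-\rho_2)(v)\bigr|,
\]
with $u = X_1(t,0,x)$, $v = X_2(t,0,y)$. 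The first piece is handled immediately by the Lipschitz estimate of Remark~\ref{remarque2}, contributing at most $C(\|\rho_1\|_{L^\infty\cap L^1})\,D(t)$ after integration against $\pi_0$.

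The main difficulty lies in the second piece, namely producing an $L^\infty$ bound on $\mathcal{K}(\rho_1-\rho_2)$ controlled linearly by $W_1(\rho_1(t),\rho_2(t)) \leq D(t)$, despite the fact that $\Phi$ is only singular-Lipschitz in the sense of \eqref{lipschitz} (it fails to be Lipschitz at the origin). The strategy, adapted from \cite{Hauray}, is to split the convolution $(\Phi \star (\rho_1-\rho_2))(v)$ into a near zone $B(v,R)$ and a far zone. On the far zone, $\Phi$ has Lipschitz constant $O(1/R^2)$ by \eqref{lipschitz}, so Kantorovich duality applied to $\pi_t$ yields a contribution $\lesssim W_1(\rho_1(t),\rho_2(t))/R^2$. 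On the near zone, the singularity $\Phi \sim 1/|x|$ is integrable in dimension three and, using the $L^\infty$ bound on $\rho_1,\rho_2$, the contribution is $\lesssim (\|\rho_1\|_{L^\infty}+\|\rho_2\|_{L^\infty})R^2$. Optimizing in $R$ and using condition $(C^\alpha)$ with $\alpha = 1 < d-1 = 2$ gives the desired control by $D(t)$, up to a constant depending on $\max(\|\rho_0^1\|_{L^\infty\cap L^1},\|\rho_0^2\|_{L^\infty\cap L^1})$ (preserved along the flow by the measure-preserving property). Plugging both estimates into $\dot D(t)$ yields a linear Gronwall inequality $\dot D(t) \leq C D(t)$, and integration provides the announced bound. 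The delicate step is precisely the near/far splitting above, as a naive application of Kantorovich--Rubinstein duality fails because of the singularity of $\Phi$; the integrability of $1/|x|$ at $0$ in $\mathbb{R}^3$ is what ultimately makes the linear Gronwall work.
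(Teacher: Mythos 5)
Your skeleton — push an optimal initial coupling along the two flows, set $D(t)=\int|X_1(t,0,x)-X_2(t,0,y)|\,d\pi_0$, differentiate, and split into $|\mathcal{K}\rho_1(u)-\mathcal{K}\rho_1(v)|+|\mathcal{K}(\rho_1-\rho_2)(v)|$, handling the first piece with the Lipschitz bound of Remark \ref{remarque2} — is indeed the standard scheme (the paper itself gives no proof and refers to \cite[Theorem 3.1]{Hauray}). The gap is in your treatment of the second piece. Your near/far splitting does \emph{not} produce an $L^\infty$ bound on $\mathcal{K}(\rho_1-\rho_2)$ that is linear in $W_1$: the near zone contributes $C(\|\rho_1\|_\infty+\|\rho_2\|_\infty)R^2$ and the far zone $C\,W_1/R^2$, and optimizing in $R$ gives $C\sqrt{W_1}$, not $C\,W_1$. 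With only $\dot D\le C(D+\sqrt D)$ you cannot conclude $D(t)\le D(0)e^{Ct}$; indeed $\dot D\le C\sqrt D$ is compatible with $D(0)=0$ and $D(t)=C^2t^2/4$, so the argument as written does not even yield uniqueness. A pointwise bound $\|\mathcal{K}(\rho_1-\rho_2)\|_{L^\infty}\lesssim W_1$ is simply not available; the linear pointwise bound holds with $W_\infty$ in place of $W_1$ (which is how the singular term is handled in Section 6 and in \cite{HJ}), but not with $W_1$.

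The way to close the Gronwall loop in $W_1$ is to avoid any pointwise estimate and bound the \emph{averaged} quantity directly. Since $X_2(t,0,\cdot)\#\rho_0^2=\rho_2(t,\cdot)$,
\[
\int\bigl|\mathcal{K}(\rho_1-\rho_2)(X_2(t,0,y))\bigr|\,d\pi_0(x,y)=\int\bigl|\mathcal{K}(\rho_1-\rho_2)(v)\bigr|\,\rho_2(t,v)\,dv ,
\]
and, representing $\rho_1(t)-\rho_2(t)$ through the transported coupling $\pi_t$, one has $\mathcal{K}(\rho_1-\rho_2)(v)=6\pi r_0\kappa\int\bigl(\Phi(v-u')-\Phi(v-v')\bigr)g\,d\pi_t(u',v')$. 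Now apply \eqref{lipschitz}, bound $\min(|v-u'|,|v-v'|)^{-2}\le|v-u'|^{-2}+|v-v'|^{-2}$, and integrate \emph{first in $v$} against $\rho_2(t,v)\,dv$ (Fubini, nonnegative integrand): since $|x|^{-2}$ is locally integrable in $\mathbb{R}^3$ (this is exactly where $\alpha=1<d-1$ enters), $\int\rho_2(t,v)|v-z|^{-2}dv\le C(\|\rho_2\|_{L^\infty\cap L^1})$ uniformly in $z$. What is left is $C\int|u'-v'|\,d\pi_t=C\,D(t)$, so the second piece contributes $C\,D(t)$ rather than $C\sqrt{D(t)}$. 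Combined with the Lipschitz term this gives the linear inequality $\dot D\le C D$ with $C$ depending only on $\max(\|\rho_1^0\|_{L^\infty\cap L^1},\|\rho_2^0\|_{L^\infty\cap L^1})$ (these norms are conserved in time because the transporting field is divergence free), which integrates to the stated estimate.
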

We refer to \cite[Theorem 3.1]{Hauray} for a complete proof which introduces the main ideas used also in \cite{HJ} for the mean field approximation result.
\subsection{$\rho^N$ as a weak solution to a transport equation}
According to Theorem \ref{thm1}, there exists a time $T>0$ independent of $N$ for which the particles do not overlap. This shows that the empirical measure
$$
\rho^N(t,x):= \frac{1}{N} \underset{i=1}{\overset{N}{\sum}} \delta_{x_i(t)}(x),
$$
is well defined on $[0,T]$. Recall that we are interested in the limiting behaviour of $\rho^N\in \mathcal{P}([0,T] \times \mathbb{R}^3)$ when $N \to \infty$. According to Proposition \ref{velocity_formula}, particles $(x_i)_{1 \leq i \leq N}$ satisfy the following system: \begin{equation*}
\left \{
\begin{array}{rcl}
\dot{x}_i &= & V_i,\\
V_i & \sim & \kappa g + 6\pi R \underset{i\neq j}{\sum} \Phi(x_i-x_j).
\end{array}
\right.
\end{equation*}
In order to prove Theorem \ref{thm2} we want to compare the particle system to the continuous density $\rho$ which is solution to equation \eqref{Vlasov_Stokes}. Hence, we need to express $\rho^N$ as a weak solution to a transport equation. The remainder of this Section is devoted to establish such a formulation.\\
Analogously to the continuous case, we are interested in giving a sense to the quantity 
$$
\mathcal{K} \rho^N = 6\pi r_0 \kappa \int \Phi(x-y) g \rho^N(t,dy),
$$
which is not well defined because $\Phi$ is singular. On the other hand, as the only values of $\Phi$ that matters are the terms $\Phi(x_i-x_j)$, $i\neq j $ we define the following regularization 
$$
\psi^N\Phi(x):= \Phi(x)\psi^N(x), 
$$
where $\psi^N(x):= \psi \left (\frac{x}{d_{\min}^N(0)}\right )$ and $\psi$ is a truncation function such that $\psi =0$ on $B(0,1/4)$ and $\psi=1$ outside $B(0,1/2)$. We can now define the operator $\mathcal{K}^N$
\begin{align*}
\mathcal{K}^N \rho^N (t,x)& := 6 \pi r_0 \kappa \int_{\mathbb{R}^3} \psi^N \Phi(x-y)\,g\, \rho^N(t,dy)\,, \\
& = \frac{6\pi r_0 \kappa}{N}\underset{i}{\sum} \psi^N\Phi(x-x_i(t)) g.
\end{align*}
Since Theorem \ref{thm1} ensures that the particles satisfy
$$
|x_i(t)-x_j(t)| \geq \frac{1}{2} d_{\min}^N(0) \:,\: \: \forall i\neq j\,, \forall\, t\in[0,T],
$$
we have for $x=x_i(t)$, $t\in[0,T]$, $1 \leq i \leq N$
$$
\mathcal{K}^N \rho^N (t,x_i(t))=\frac{6\pi r_0 \kappa}{N}\underset{j\neq i}{\sum} \Phi(x_j(t)-x_i(t))g.
$$
Now, it remains to check that $\rho^N$ is a weak solution of a transport equation. We recall that $\rho^N$ is a weak solution of a transport equation $\frac{\partial }{\partial t }+ \div( V \rho^N) $ with $V\in \mathcal{C}([0,T],\mathcal{C}^1(\mathbb{R}^3))$ if for all test function $\phi \in \mathcal{C}^\infty_c ( [0,T] \times \mathbb{R}^3 )$ we have
$$
\int_0^T \int_{\mathbb{R}^3} \left ( \partial_t \phi(t,x) + \nabla \phi(t,x) \cdot V(t,x) \right ) \rho^N(dx,t) dt  =0.
$$
Note that this integral yields
\begin{align*}
 \phantom{=}&   \int_0^T \int_{\mathbb{R}^3} \left ( \partial_t \phi(t,x) + \nabla \phi(t,x) \cdot V(t,x) \right ) \rho^N(dx,t) dt\,,  \\ 
 &=    \int_0^T \frac{1}{N} \underset{i}{\sum} \left ( \partial_t \phi(t,x_i(t)) + \nabla \phi(t,x_i(t)) \cdot V(t,x_i(t)) \right ).
 \end{align*}
In particular if we choose $V$ such that $V(t,x_i(t))= V_i(t)$ one has
 \begin{align*}
 & =   \int_0^T \frac{1}{N} \underset{i}{\sum}  \partial_t \phi(t,x_i(t)) + \nabla \phi(t,x_i(t)) \cdot V_i \,, \\ 
&= \frac{1}{N} \underset{i}{\sum} \int_0^T \frac{d}{dt}( \phi(t,x_i(t)) ) \,, \\ 
 &=0.
 \end{align*}
On the other hand, we recall that from Proposition \ref{velocity_formula} we can write for all $1\leq i \leq N$ 
\begin{align*}
V_i&= \kappa g +   6 \pi R \underset{j\neq i}{\overset{N}{\sum}} \Phi(x_i-x_j)\kappa g+ E_i (t) \,,\\
& = \kappa g + \mathcal{K}^N \rho^N(t,x_i(t)) + E_i(t),
\end{align*}
with $E_i(t) = O(d_{\min}^N)$. Hence if we construct a divergence-free vector field $E^N$ such that 
$$
E^N(t,x_i(t))= E_i(t),
$$
we can define $V$ as 
$$
V(t,x)= \kappa g + \mathcal{K}^N \rho^N(t,x)+E^N(t,x).
$$
\paragraph{\textbf{Construction of $E^N$}}\label{E^N_construction}
We fix $\chi$ a truncation function such that $\chi=1 $ on $B(0,1)$ and $\chi = 0 $ on ${}^c B(0,2)$. For all $i$ we set
$$
\mathcal{E}_i(t,x):= \text{curl} \left ( \chi \left (\frac{x-x_i(t)}{R} \right ) E_i(t) \times \frac{x-x_i(t)}{2} \right ). 
$$ 
By construction, $\mathcal{E}_i$ is a divergence-free compactly supported vector field satisfying
$$
\mathcal{E}_i(t,x_i(t))= E_i(t).
$$
Furthermore, $\mathcal{E}_i$ is supported in $B(x_i(t),2R)$. Thanks to Theorem \ref{thm1}, this entails that $\supp (\mathcal{E}_i) \cap \supp (\mathcal{E}_j)= \emptyset$ for $i\neq j$. We set then 
$$
E^N(t,x):= \underset{i}{\sum} \mathcal{E}_i(t,x)\,.
$$
By construction, this velocity field is divergence-free and is regular $E^N \in \mathcal{C}([0,T] \times \mathbb{R}^3 )$, $E^N(t,\cdot ) \in  \mathcal{C}^1(\mathbb{R}^3) $ for all $0 \leq t \leq T $. Moreover is satisfies for all $t \in[0,T]$
$$E^N(t,x_i(t)) = E_i(t)  \text{ for all  $1 \leq i \leq N $,} $$ 
\begin{equation}\label{bound_E^N}
\| E^N(t,\cdot) \|_\infty \leq \,C_\chi\, \underset{i}{\max}\,|E_i(t)| \lesssim d_{\min}^N.
 \end{equation}
The only statement that needs further explanation is \eqref{bound_E^N}. 
For all $x\in B(x_i(t),R_i)$ we have
$$
\mathcal{E}_i(t,x) = E_i(t),
$$
and for all $x \in B(x_i,2R) \setminus B(x_i,R)$, direct computations yields 
\begin{multline*}
\mathcal{E}_i(t,x)= \frac{1}{2}\Big [2 \chi \left (\frac{x-x_i(t)}{R}\right )  \mathbb{I}_3- \frac{1}{R}\nabla  \chi \left (\frac{x-x_i(t)}{R}\right )  \otimes (x-x_i(t))\\
+  \frac{1}{R} (x-x_i(t))\cdot  \nabla  \chi \left (\frac{x-x_i(t)}{R}\right ) \mathbb{I}_3   \Big] E_i(t).
\end{multline*}
Therefore
$$
|\mathcal{E}_i(t,x)| \leq C \left [\|\chi \|_\infty +  \| \nabla \chi\|_\infty  \right ] | E_i(t)|.
$$
We can now state the following proposition.
\begin{proposition}\label{flow}
For arbitrary $N$ we have that $\kappa g + \mathcal{K}^N \rho^N + E^N \in \mathcal{C}([0,T] \times \mathbb{R}^3)$ and $\nabla \mathcal{K}^N \rho^N + \nabla E^N \in \mathcal{C}([0,T] \times \mathbb{R}^3) $. Moreover, the velocity field satisfies 
\begin{equation}\label{croissance_lineaire}
|\kappa g + \mathcal{K}^N \rho^N(t,x) + E^N (t,x)| \leq C  \:,\: \forall (t,x) \in [0,T]  \times \mathbb{R}^3,
\end{equation}
for some positive constant $C$ independent of $N$.
\end{proposition}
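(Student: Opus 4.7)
The plan is to separately handle the three summands $\kappa g$, $\mathcal{K}^N \rho^N$ and $E^N$. The term $\kappa g$ is a constant and thus poses no issue, and the properties of $E^N$ have already been essentially collected in the construction preceding the proposition, so the real content is to establish the $(t,x)$-regularity and the $N$-independent bound for $\mathcal{K}^N \rho^N$.

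For the regularity statement, first fix $t\in[0,T]$. By construction, $\psi^N \Phi$ is $\mathcal{C}^\infty(\mathbb{R}^3)$ since the truncation $\psi^N$ vanishes on $B(0, d_{\min}^N(0)/4)$, so in space the finite sum
$$
x\longmapsto \frac{6\pi r_0\kappa}{N}\sum_{i=1}^N \psi^N\Phi(x-x_i(t))\,g,
$$
together with its gradient, is smooth. Similarly, each $\mathcal{E}_i(t,\cdot)$ is a finite product of smooth compactly supported functions, so $E^N(t,\cdot)$ is $\mathcal{C}^1$ in $x$. For the joint continuity in $(t,x)$, I would use the fact that each trajectory $t\mapsto x_i(t)$ is Lipschitz on $[0,T]$, because $\dot x_i = V_i$ and $|V_i|$ is bounded independently of $N$ thanks to Corollary \ref{corol}. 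Plugging this continuity into the explicit formulas for $\psi^N\Phi(x-x_i(t))$, $\nabla[\psi^N\Phi](x-x_i(t))$, and the explicit expressions for $\mathcal{E}_i$ and $\nabla\mathcal{E}_i$ yields continuity in $(t,x)$ of $\mathcal{K}^N\rho^N+E^N$ and of its $x$-gradient.

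For the uniform bound \eqref{croissance_lineaire}, the estimate on $E^N$ is already \eqref{bound_E^N}, which gives $\|E^N(t,\cdot)\|_\infty \lesssim d_{\min}^N$, and this is uniformly bounded. The remaining task is to bound $\mathcal{K}^N\rho^N$ independently of $N$ and $x$. Since $|\psi^N\Phi(y)|\lesssim 1/|y|$ with support in $\{|y|\geq d_{\min}^N(0)/4\}$, it suffices to show
$$
\frac{R}{N}\sum_{i=1}^N \frac{\mathbf{1}_{|x-x_i(t)|\geq d_{\min}^N(0)/4}}{|x-x_i(t)|}\lesssim 1,
$$
uniformly in $x$, $t$ and $N$. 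The strategy is the standard annular decomposition already used implicitly in Lemma \ref{JO}: split the indices according to whether $x_i(t)$ lies in the shell $B_\infty(x,k\lambda^N)\setminus B_\infty(x,(k-1)\lambda^N)$ for $k\geq 1$. By the very definition of $M^N$, each such shell contains at most $\mathcal{O}(k^2 M^N)$ particles. For the innermost shell ($k=1$) the truncation $\psi^N$ bounds $1/|x-x_i|$ by $4/d_{\min}^N(0)$, while for $k\geq 2$ one bounds $1/|x-x_i|$ by $1/((k-1)\lambda^N)$, giving a convergent geometric-type sum up to the cut-off scale where all particles are exhausted. Using $R\lesssim 1/N$, $M^N\lesssim \bar M N|\lambda^N|^3$ and $|\lambda^N|^3\lesssim \mathcal{E}|d_{\min}^N|^2$ from \eqref{bound_concentration}--\eqref{hyp1}, each piece contributes a bound depending only on $r_0,\bar M,\mathcal{E},\kappa|g|$.

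The main obstacle I anticipate is the near-field contribution at the innermost shell, which is the only one where the truncation scale $d_{\min}^N(0)$ enters explicitly. Controlling it requires using both the concentration bound $M^N\lesssim\bar M N|\lambda^N|^3$ \emph{and} assumption \eqref{hyp1} to absorb the $1/d_{\min}^N(0)$ factor, via
$$
\frac{R}{N}\cdot M^N\cdot\frac{1}{d_{\min}^N(0)}\lesssim r_0\bar M\,\frac{|\lambda^N|^3}{d_{\min}^N(0)}\lesssim r_0\bar M\mathcal{E}\,d_{\min}^N(0),
$$
which is uniformly bounded (indeed, vanishes as $N\to\infty$). Once this local term is handled, the far-field sum behaves like the standard $\sum 1/d_{ij}$ estimate of Lemma \ref{JO} applied to the arbitrary center $x$ (which is legitimate because $M^N$ is defined as a supremum over all points of $\mathbb{R}^3$, not only particle positions), yielding the $N$-independent constant $C=C(r_0,\bar M,\mathcal{E},\kappa|g|)$ claimed.
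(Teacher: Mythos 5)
Your proposal is correct and follows essentially the same route as the paper: regularity holds by construction of the regularized kernel and of $E^N$, the bound on $E^N$ is \eqref{bound_E^N}, and $\mathcal{K}^N\rho^N$ is bounded uniformly by running the Lemma \ref{JO} ($k=1$) argument at an arbitrary center $x$, with the near-field contribution $\bar{M}|\lambda^N|^3/d_{\min}^N(0)$ absorbed via \eqref{hyp1} and Theorem \ref{thm1} (which keeps the configuration, hence the concentration bound, admissible on $[0,T]$). Just note that the prefactor in your reduction should be $1/N$ (equivalently $R$ up to the constant $r_0$), not $R/N$; with that correction your estimates coincide with the paper's.
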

\begin{proof}
As the kernel is regularized, the two first properties are satisfied by construction. For all $ (t,x) \in [0,T] \times \mathbb{R}^3$ we have 
\begin{align*}
 \mathcal{K}^N\rho^N(x) &=  \frac{6\pi r_0 \kappa}{N} \underset{i}{\sum}\psi^N(x) \Phi(x-x_i(t))\,,\\
&=  \frac{6\pi r_0 \kappa}{N} \underset{i}{\sum} \psi^N(x) 1_{\{|x_i(t)-x| > \frac{d_{\min}^N(0)}{2}\}} \Phi(x-x_i(t)).
\end{align*}
We set $\mathcal{I}(t,x) = \{  1\leq i \leq N \,,\, |x_i(t)-x| > \frac{d_{\min}^N(0)}{2}\}$. Reproducing the arguments of Lemma \ref{JO} for $k=1$ together with assumptions \eqref{hyp1}, \eqref{compatibility} and Theorem \ref{thm1} yields
\begin{align*}
\left | \mathcal{K}^N\rho^N(x)\right | &\lesssim \frac{1}{N} \underset{\mathcal{I}(t,x)}{\sum} \frac{1}{|x-x_i(t)|}\,,\\
& \lesssim \bar{M} \frac{|\lambda^N|^3}{d_{\min}^N(0)} +\bar{M}^{1/3} \,,\\
&\lesssim \bar{M} \frac{|\lambda^N|^3}{|d_{\min}^N(0)|^2}d_{\min}^N(0) +\bar{M}^{1/3} \,,\\
&\lesssim 1\,.
\end{align*}
Furthermore, the velocity field $E^N$ is uniformly bounded according to \eqref{bound_E^N}.
\end{proof}
This allows us to state the following result.
\begin{theorem}\label{thm_VlasovN}
$\rho^N$ is a weak solution of 
\begin{equation}\label{Vlasov_StokesN}
\left\{
\begin{array}{rcl}
\frac{\partial \rho^N}{\partial t }+{\div}( ( \kappa g + \mathcal{K}^N\rho^N + E^N ) \rho^N ) &=& 0\,, \, \\
\rho^N(0,\cdot) &= & \rho^N_0 \,,
\end{array}
\right.
\end{equation}
on $[0,T]\times \mathbb{R}^3$.
Moreover, the characteristic flow defined for all $ s,t \in [0,T]$ by
\begin{equation}\label{flow_XN}
\left \{
\begin{array}{rcl}
\partial_s X^N(s,t,x) & = & \kappa g + \mathcal{K}^N\rho^N(s, X^N(s,t,x)) + E^N (s,X^N(s,t,x)), \\
X^N(t,t,x) & = & x,
\end{array}
\right.
\end{equation}
is of class $\mathcal{C}^1$ for all $N\geq 1$ and the following classical formula holds true:
\begin{equation}\label{formula_rho^N}
\rho^N(t,\cdot)= X^N(t,0,\cdot) \# \rho^N_0 \:\: \forall\, t \in[0,T].
\end{equation}
\end{theorem}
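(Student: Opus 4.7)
The plan is to set $V(t,x) := \kappa g + \mathcal{K}^N \rho^N(t,x) + E^N(t,x)$ and carry out three tasks in order: identify $V$ with the particle velocities along the support of $\rho^N$, verify the weak transport identity against arbitrary test functions, and derive the pushforward formula via uniqueness of the characteristic flow.

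The first and most delicate step is the pointwise identity $V(t, x_i(t)) = V_i(t)$ for every $1 \leq i \leq N$ and every $t \in [0,T]$. Thanks to Theorem \ref{thm1} the minimal distance is preserved on $[0,T]$, so $|x_i(t) - x_j(t)| \geq d_{\min}^N(0)/2$ for all $i \neq j$; combined with the design of the cutoff $\psi^N$ (which equals $1$ outside $B(0,d_{\min}^N(0)/2)$ and vanishes at the origin), this collapses
\[ \mathcal{K}^N \rho^N(t, x_i(t)) = \frac{6\pi r_0 \kappa}{N}\sum_{j \neq i} \Phi(x_i(t) - x_j(t))\, g = 6\pi R \sum_{j \neq i}\Phi(x_i - x_j)\,\kappa g, \]
using $R = r_0/N$. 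This is precisely the leading term of the velocity expansion in Proposition \ref{velocity_formula}, and since $E^N$ was engineered in the preceding paragraph so that $E^N(t, x_i(t)) = E_i(t)$, I recover $V(t, x_i(t)) = V_i(t) = \dot x_i(t)$.

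Given that identity, the weak formulation is a one-line calculation: for $\phi \in \mathcal{C}_c^\infty([0,T) \times \mathbb{R}^3)$, the atomic structure of $\rho^N$ together with $\dot x_i(t) = V(t, x_i(t))$ turns $\int_0^T \int (\partial_t \phi + V \cdot \nabla \phi)\,\rho^N\, dx\, dt$ into $\frac{1}{N}\sum_i \int_0^T \frac{d}{dt}\phi(t, x_i(t))\, dt$, which, by the fundamental theorem of calculus and the time support of $\phi$, integrates to $-\frac{1}{N}\sum_i \phi(0, x_i^0) = -\int \phi(0,\cdot)\,\rho^N_0\, dx$. This is exactly the weak transport equation with initial datum $\rho^N_0$.

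For the final assertion, Proposition \ref{flow} guarantees that $V$ is continuous in $(t,x)$, of class $\mathcal{C}^1$ in the space variable, and uniformly bounded on $[0,T] \times \mathbb{R}^3$, so Cauchy--Lipschitz produces a unique global $\mathcal{C}^1$ flow $X^N$ solving \eqref{flow_XN}. Because each trajectory $t \mapsto x_i(t)$ is, by the first step, an integral curve of $V$ passing through $x_i^0$ at $t=0$, uniqueness forces $x_i(t) = X^N(t, 0, x_i^0)$, and pushing $\rho^N_0 = \frac{1}{N}\sum_i \delta_{x_i^0}$ forward yields $X^N(t,0,\cdot)\#\rho^N_0 = \frac{1}{N}\sum_i \delta_{x_i(t)} = \rho^N(t,\cdot)$. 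The only nontrivial point is the first identity, where the interaction between the regularization $\psi^N$ and the minimal-distance lower bound from Theorem \ref{thm1} must be exact; all remaining steps are standard applications of the ODE/transport-equation correspondence.
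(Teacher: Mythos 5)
Your proposal is correct and follows essentially the same route as the paper: define $V=\kappa g+\mathcal{K}^N\rho^N+E^N$, use Theorem \ref{thm1} together with the construction of $\psi^N$ and $E^N$ to get $V(t,x_i(t))=V_i(t)$, verify the weak formulation by the atomic-measure computation, and apply Cauchy--Lipschitz via Proposition \ref{flow}. Your derivation of \eqref{formula_rho^N} by identifying each trajectory $x_i(\cdot)$ with the flow curve $X^N(\cdot,0,x_i^0)$ through ODE uniqueness is a slightly more explicit version of the paper's appeal to classical transport theory, but it is the same argument in substance.
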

\begin{proof}
As $V(t,x):=\kappa g + \mathcal{K}^N\rho^N(t,x) + E^N(t,x)\in \mathcal{C}^1([0,T] \times \mathbb{R}^3) $ is defined such that $V(t,x_i(t))=V_i\,,\, \: \forall \, 1\leq i \neq N$ this ensures that for all test function $\phi \in \mathcal{C}^\infty_c ( [0,T] \times \mathbb{R}^3 )$:
$$
\int_0^T \int_{\mathbb{R}^3} \left ( \partial_t \phi(t,x) + \nabla \phi(t,x) \cdot \left [ \kappa g + \mathcal{K}^N\rho^N(t,x) + E^N(t,x) \right ] \right ) \rho^N(dx,t) dt  =0,
$$
thus, $\rho^N$ is a weak solution for \eqref{Vlasov_StokesN}.\\
According to Proposition \ref{flow}, the ode governing the characteristic flow satisfies the assumptions of the Cauchy-Lipschitz theorem. Therefore, the ode admits a unique maximal solution  $X^N \in \mathcal{C}^1([0,T] \times [0,T] \times \mathbb{R}^3)$ thanks to formula \eqref{croissance_lineaire}.
Equality \eqref{formula_rho^N} holds true thanks to the classical theory for transport equations.
\end{proof}
\section{Proof of Theorem \ref{thm2}}
At this point, we proved that the particles interact two by two with an interaction force given by the Oseen-tensor with an additional error term.
\begin{equation}
\left \{
\begin{array}{rcl}
\dot{x}_i(t) &= & V_i(t),\\
V_i(t) & = & \kappa g + 6\pi R \underset{i\neq j}{\sum} \Phi(x_i(t)-x_j(t))+E^N(t,x_i(t)).
\end{array}
\right.
\end{equation}
We want to estimate the Wasserstein distance $W_1(\rho^N(t,\cdot),\rho(t,\cdot))$ for all time $0\leq t \leq T$. To this end, we follow the ideas of \cite{Hauray} and \cite{HJ} and show that the additional error term $E^N$ can be controled. As in \cite{HJ}, we introduce an intermediate density $\bar{\rho}^N$.  
\subsection{Step 1. Estimate of the distance between $\rho$ and $\bar{\rho}^N$}
We define $\bar{\rho}^N_0$ as the regularized density  of $\rho^N_0$: 
$$\bar{\rho}^N_0:= \rho^N_0 * \chi_{\lambda^N} $$ 
where $\chi_{\lambda^N}(x):= \frac{1}{|\lambda^N|^3} \chi\left (\frac{x}{\lambda^N} \right) $ a mollifier compactly supported. Note that the support of $\chi$ is not important, we consider for instance $\chi$ such that $\supp \chi=B(0,1)$.
We emphasize that the regularized density is uniformly bounded
\begin{align}\label{bound_bar_rhoNinfty}
\bar{\rho}^N_0(x)& = \int \frac{1}{|\lambda^N|^3} \chi \left (  \frac{x-y}{\lambda^N} \right) \rho_0^N(dy)\,, \notag\\
&= \frac{1}{N|\lambda^N|^3}\underset{i=1}{\overset{N}{\sum}} \chi\left (  \frac{x-x_i(0)}{\lambda^N} \right)\,,\notag \\
& \leq \frac{1}{N|\lambda^N|^3} \| \chi\|_\infty \sup_x \# \{i\in \{1,\dots,N\}\,,\, x_i(0) \in B(x,\lambda^N) \}\,,\notag\\
& \leq \|\chi\|_\infty  \bar{M},
\end{align}
according to assumption \eqref{bound_concentration}. Moreover, we have 
\begin{align}\label{bound_bar_rhoN1}
\int_{\mathbb{R}^3}\bar{\rho}^N_0(x) dx& =\frac{1}{N |\lambda^N|^3} \underset{i=1}{\overset{N}{\sum}}\int_{B(x_i(0),\lambda^N)} \chi\left (  \frac{x-x_i(0)}{\lambda^N} \right)dx\,,\notag \\
&=1.
\end{align}
%Remark also that $\bar{\rho}^N_0$ is uniformly compactly supported thanks to assumption \eqref{initial_compactness}: 
%$$
%\supp \bar{\rho}^N_0 \subset \bigcup_i B(x_i(0),\lambda^N) \subset B(0,\bar{R}+1) \,,\:\: \forall N \in \mathbb{N}.
%$$
Now, we define $\bar{\rho}^N$ as the unique solution to  equation \eqref{Vlasov_Stokes} associated to the initial data $\bar{\rho}^N_0$. The stability Theorem \ref{stability_Loeper} allows us to compare $\rho$ and $\bar{\rho}^N$: 
$$
W_1(\rho(t,\cdot), \bar{\rho}^N(t,\cdot)) \leq W_1(\rho_0, \bar{\rho}_0^N)e^{C t} ,
$$
where $C=C(\|\chi\|_\infty, \bar{M},\|\rho_0\|_{\infty})$.
We split the distance $W_1(\rho_0, \bar{\rho}_0^N)$ as follows 
$$
W_1(\rho_0, \bar{\rho}_0^N) \leq W_1(\rho_0,{\rho}_0^N)+ W_1(\rho_0^N,\bar{\rho}_0^N),
$$
and use the fact that
$$
W_1(\rho_0^N,\bar{\rho}_0^N) \leq W_\infty(\rho_0^N,\bar{\rho}_0^N),
$$
together with \cite[Proposition 1]{HJ}
\begin{equation}\label{born_wasserstein0_bis}
W_\infty(\rho_0^N,\bar{\rho}_0^N)\leq {C}\lambda^N,
\end{equation}
to get 
\begin{equation}\label{estimate_1}
W_1(\rho(t,\cdot), \bar{\rho}^N(t,\cdot)) \lesssim \left ( \lambda^N+W_1(\rho_0,{\rho}_0^N) \right) e^{C t}.
\end{equation}
\subsection{Step 2. Estimate of the distance between $\bar{\rho}^N$ to $\rho^N$}
It remains to estimate $W_1(\rho^N(t,\cdot), \bar{\rho}^N(t,\cdot))$. We have the following result. 
\begin{lemma}\label{lemme_Wasserstein}
For $N$ large enough, there exists a positive constant $C$ such that for all $t\in[0,T]$
$$
W_1(\rho^N(t,\cdot), \bar{\rho}^N(t,\cdot)) \lesssim \left( \lambda^N  + t  d_{\min}^N \right)e^{Ct}.$$
\end{lemma}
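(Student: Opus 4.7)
Following \cite{HJ}, I would reduce $W_1$ to a comparison of two flows pushing the same initial measure. Since $\bar{\rho}_0^N$ is absolutely continuous while $\rho_0^N$ is a finite sum of Diracs, Theorem~\ref{existence_transport_map} produces an optimal $W_\infty$-transport map $T_0$; then \eqref{born_wasserstein0_bis} gives $|T_0(x)-x| \leq W_\infty(\bar{\rho}_0^N,\rho_0^N) \lesssim \lambda^N$ for $\bar{\rho}_0^N$-a.e.\ $x$. Setting $\Psi_t := X^N(t,0,\cdot)\circ T_0$ and $\bar{\Psi}_t := \bar{X}^N(t,0,\cdot)$ with the characteristic flows of Theorems~\ref{thm_Vlasov} and~\ref{thm_VlasovN}, the pushforward identities $\Psi_t\#\bar{\rho}_0^N = \rho^N(t,\cdot)$ and $\bar{\Psi}_t\#\bar{\rho}_0^N = \bar{\rho}^N(t,\cdot)$ hold, so introducing $\eta(t) := \bar{\rho}_0^N\text{-}\esssup|\Psi_t(\cdot)-\bar{\Psi}_t(\cdot)|$ one has $W_1(\rho^N(t,\cdot),\bar{\rho}^N(t,\cdot)) \leq \eta(t)$, with $\eta(0) \lesssim \lambda^N$.

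\textbf{Differential inequality.} For fixed $x$ with $T_0(x) = x_{i(x)}(0)$, set $a := \Psi_t(x) = x_{i(x)}(t)$ and $b := \bar{\Psi}_t(x)$. Differentiating $|\Psi_t(x)-\bar{\Psi}_t(x)|$ via the ODEs \eqref{flow_X}, \eqref{flow_XN} and inserting $\mathcal{K}\bar{\rho}^N(t,a)$ yields
$$\frac{d}{dt}|\Psi_t(x) - \bar{\Psi}_t(x)| \leq \underbrace{|\mathcal{K}\bar{\rho}^N(t,a)-\mathcal{K}\bar{\rho}^N(t,b)|}_{A_1} + \underbrace{|\mathcal{K}^N\rho^N(t,a) - \mathcal{K}\bar{\rho}^N(t,a)|}_{A_2} + \underbrace{|E^N(t,a)|}_{A_3}.$$
Remark~\ref{remarque2} together with the uniform bounds \eqref{bound_bar_rhoNinfty}--\eqref{bound_bar_rhoN1} on $\|\bar{\rho}^N\|_{L^1\cap L^\infty}$ give $A_1 \lesssim |a-b| \leq \eta(t)$, while \eqref{bound_E^N} directly gives $A_3 \lesssim d_{\min}^N$.

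\textbf{Kernel mismatch and Gronwall.} The main obstacle is the control of $A_2$. I would split $A_2 \leq A_{2,1}+A_{2,2}$, where
$$A_{2,2} := \left|\int(1-\psi^N)\Phi(a-y)\bar{\rho}^N(t,y)\,g\,dy\right| \lesssim \|\bar{\rho}^N\|_\infty (d_{\min}^N)^2 \lesssim d_{\min}^N,$$
using $\supp(1-\psi^N)\subset B(0,d_{\min}^N(0)/2)$ and $\Phi\in L^1_{\mathrm{loc}}$, and the coupling difference
$$A_{2,1} := 6\pi r_0 \kappa\left|\int\bigl[\psi^N\Phi(a-\Psi_t(z)) - \psi^N\Phi(a-\bar{\Psi}_t(z))\bigr]\bar{\rho}_0^N(z)\,g\,dz\right|$$
is split over $\mathcal{S}_i := \{z : i(z) = i(x)\}$, of $\bar{\rho}_0^N$-mass $1/N$, and its complement. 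On $\mathcal{S}_i^c$, the preserved minimal distance (Theorem~\ref{thm1}) ensures $|a-\Psi_t(z)| \geq d_{\min}^N(0)/2$, so the Lipschitz bound \eqref{lipschitz} applied along each segment $[\Psi_t(z),\bar{\Psi}_t(z)]$ combined with Lemma~\ref{JO} for $k=2$ yields a contribution $\lesssim \eta(t)$. On $\mathcal{S}_i$, $\Psi_t(z)=a$ and the integrand reduces to $|\psi^N\Phi(a-\bar{\Psi}_t(z))g| \lesssim 1/d_{\min}^N$, giving $\lesssim 1/(Nd_{\min}^N) \lesssim d_{\min}^N$ via \eqref{hyp4}. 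The role of hypothesis \eqref{hyp5} is, through a bootstrap, to keep $\bar{\Psi}_t(z)$ at distance $\gtrsim d_{\min}^N$ from $a$ on $\mathcal{S}_i^c$ so that the denominator in \eqref{lipschitz} stays of the correct order; this is the subtlest technical point. Gathering the estimates produces $\eta'(t) \leq C\eta(t) + C'd_{\min}^N$, and Gronwall with $\eta(0) \lesssim \lambda^N$ yields $\eta(t) \lesssim (\lambda^N + td_{\min}^N)e^{Ct}$, concluding the lemma.
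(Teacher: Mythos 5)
Your coupling skeleton is the same as the paper's (optimal $W_\infty$-map $T_0$ at time $0$, transported along the discrete and continuous flows, $W_1\le W_\infty\le$ essential sup of the displacement, Gronwall), and your terms $A_1$, $A_3$, $A_{2,2}$ and the $\mathcal{S}_i$ contribution are handled correctly. The gap is in the far-field part of $A_{2,1}$. On $\mathcal{S}_i^c$ you invoke the Lipschitz-type bound \eqref{lipschitz}, whose denominator is $\min\bigl(|a-\Psi_t(z)|^2,\,|a-\bar{\Psi}_t(z)|^2\bigr)$; Theorem \ref{thm1} only controls the first distance ($\ge d_{\min}^N(0)/2$), and the second can degenerate: $|a-\bar{\Psi}_t(z)|\ge |a-\Psi_t(z)|-\eta(t)$, which is useless unless $\eta(t)\ll d_{\min}^N$. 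But the compatibility condition \eqref{compatibility} forces $\lambda^N\ge d_{\min}^N/2$, and in the regime the paper targets one has $\lambda^N\gg d_{\min}^N$ (e.g. $\lambda^N=N^{-1/3}$, $d_{\min}^N\sim N^{-1/2}$), while $\eta(0)$ is already of order $\lambda^N$; so for $z$ assigned to a nearest-neighbour particle of $x_{i}$ the point $\bar{\Psi}_t(z)$ may come arbitrarily close to $a$ (the denominator can even vanish). Assumption \eqref{hyp5} only gives $|\lambda^N|^2/d_{\min}^N(0)\to0$, not $\lambda^N/d_{\min}^N(0)\to0$, so the bootstrap you propose ("keep $\bar{\Psi}_t(z)$ at distance $\gtrsim d_{\min}^N$ from $a$ on $\mathcal{S}_i^c$") cannot close under the paper's hypotheses; requiring it would amount to assuming $W_\infty(\rho_0^N,\bar\rho_0^N)\ll d_{\min}^N$, which defeats the purpose of the result. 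Consequently your problematic region is not $\mathcal{S}_i$ (mass $1/N$) but the set of $z$ whose image under either flow lands within $O(\eta(t))$ of $a$, which meets many cells $\mathcal{S}_j$.

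The paper's proof differs exactly at this point: it splits the $y$-integral according to whether $|x_t-y_t|\ge 4f^N(t)e^{L\tau}$ or not. On the far set both $|x_s-y_s|$ and $|T_s(x_s)-T_s(y_s)|$ are comparable to $|x_t-y_t|$, so \eqref{lipschitz} plus the uniform $L^1\cap L^\infty$ bound on $\bar\rho^N$ gives a contribution $\lesssim f^N(t)$; on the near set, whose Lebesgue measure is $\lesssim |f^N(t)|^3$, the two kernels are bounded separately by $1/|x_s-y_s|$ and $1/d_{\min}^N(0)$, producing the nonlinear terms $|f^N|^2$ and $|f^N|^3/d_{\min}^N(0)$. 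The resulting inequality is not linear in $f^N$, and \eqref{hyp5} enters precisely to run the continuity/bootstrap argument that keeps the nonlinear bracket bounded up to time $T$, after which the (discrete) Gronwall argument yields the stated rate. If you repair your $\mathcal{S}_i^c$ estimate by isolating the set where $|a-\bar\Psi_t(z)|\lesssim\eta(t)$ and using its small measure, you will be led to these same nonlinear terms and the same use of \eqref{hyp5}; as written, your linear inequality $\eta'\le C\eta+C'd_{\min}^N$ is not justified.
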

Theorem \ref{thm2} is a consequence of estimate \eqref{estimate_1} and Lemma \ref{lemme_Wasserstein}. The rest of this Section is devoted to proving the above lemma.
\begin{proof}[Proof of Lemma \ref{lemme_Wasserstein}]
According to Theorems \ref{thm_Vlasov} and \ref{thm_VlasovN} we have the explicit formulas for all $s\,,\,t \in [0,T]$ 
\begin{eqnarray*}
 \bar{\rho}^N(t,\cdot)& =  &X(t,s,\cdot ) \# \bar{\rho}^N_s, \\
 \rho^N(t,\cdot) & = & X^N(t,s,\cdot ) \# \rho_s^N.
 \end{eqnarray*}
At $t=0$ we have the existence of an optimal transport map $T_0$ from $\bar{\rho}_0^N$ to $\rho_0^N$ thanks to Theorem \ref{existence_transport_map}
$$\rho_0^N= T_0 \#  \bar{\rho}^N_0,$$
satisfying
$$
W_\infty (\bar{\rho}^N_0,\rho^N_0) = \bar{\rho}^N_0 -  \esssup |T_0(x)-x|.
$$
We construct then a transport map $T_t$ from $\bar{\rho}^N$ to $\rho^N$ at all time $t\in[0,T]$ by following $T_0$ along the two flows $X$ and $X^N$
$$
T_t= X^N(t,0,\cdot) \circ T_0 \circ X(0,t,\cdot).
$$
One can remark that for all $0\leq s\leq t$:
\begin{eqnarray*}
 T_t & = & X^N(t,s,\cdot) \circ T_s \circ X(s,t,\cdot)\\
 \rho^N(t,\cdot)& = & T_t \# \bar{\rho}^N(t,\cdot).
\end{eqnarray*}
As in \cite{HJ} we set then 
$$f^N(t):=\underset{s\leq t}{\sup} \ \bar{\rho}^N(t,\cdot) -  \esssup |T_s(x)-x|,
$$
so that 
$$
W_\infty ( \rho^N(t,\cdot), \bar{\rho}^N(t,\cdot)) \leq f^N(t),
$$and thanks to \eqref{born_wasserstein0_bis} we have
\begin{equation}\label{born_wasserstein0}
f^N(0)= W_\infty (\bar{\rho}^N_0,\rho^N_0)\leq C \lambda^N.
\end{equation}
We reproduce the same steps as in \cite{HJ} and introduce the following notation for a generic ``particle'' of the continuous system with position $x_t$ at time $t$ such that
$$x_s=X(s,t,x_t),$$
we fix in what follows $0 \leq t_2 \leq t_1 $ and recall the following formula 
$$
T_{t_1}\circ X(t_1,t_2,\cdot)= X^N(t_1,t_2,\cdot) \circ T_{t_2}.
$$
We aim now to estimate $|T_{t_1}(x_{t_1})-x_{t_1}|$ for all test particle $x_{t_1}$ 
\begin{align*}
T_{t_1}(x_{t_1}) - x_{t_1} & = X^N(t_1,t_2,T_{t_2}(x_{t_2}))- X(t_1,t_2,x_{t_2}), \\
& = T_{t_2}(x_{t_2})- x_{t_2} + \int_{t_2}^{t_1} \dot{X}^N(s,t_2,T_{t_2}(x_{t_2})) - \dot{X}(s,t_2,x_{t_2}) ds, \\
& =  T_{t_2}(x_{t_2})- x_{t_2} + \int_{t_2}^{t_1} \big ( [ \mathcal{K}^N \rho^N +E^N](s,X^N(s,t_2,T_{t_2}(x_{t_2}))),\\
& - \mathcal{K}\bar{\rho}^N(s,x_s) ) \big ) ds, \\
& =  T_{t_2}(x_{t_2})- x_{t_2} +  \int_{t_2}^{t_1} \big(  [\mathcal{K}^N \rho^N+E^N](s,T_s(x_s)) - \mathcal{K}\bar{\rho}^N(s,x_s )) \big ) ds, \\
& =  T_{t_2}(x_{t_2})- x_{t_2} + \int_{t_2}^{t_1} E^N(s,T_s(x_s)) ds, \\
& +\int_{t_2}^{t_1} \int_{\mathbb{R}^3} 6 \pi r_0 \kappa \left (   \psi^N\Phi (T_s(x_s)- T_s(y)  ) - \Phi(x_s-y) \right )g \bar{\rho}^N(s,dy) ds,
\end{align*}
where we used the fact that $\rho^N_s = T_s \#\bar{\rho}^N_s$ to get
\begin{align*}
\mathcal{K}^N\rho^N(s,T_s(x_s)) & = 6\pi r_0 \kappa \int_{\mathbb{R}^3} \psi^N\Phi(T_s(x_s)-y) g \rho^N(s,dy)\,, \\
& =6\pi r_0 \kappa \int_{\mathbb{R}^3} \psi^N\Phi(T_s(x_s) -T_s(y))g\bar{\rho}^N(s,dy).
\end{align*}
We set then $t_1=t$ and $t_2=t_1-\tau = t- \tau $ , $\tau>0$. We obtain for almost every $x_{t}$
\begin{align*}
|T_{t}(x_{t})- x_{t}| &\leq |T_{t-\tau}(x_{t-\tau})- x_{t-\tau}| + \tau \|E^N(t)\|_\infty,\\
&  +6\pi r_0 \kappa |g| \int_{t-\tau}^{t} \int_{\mathbb{R}^3} \left |   \psi^N\Phi (T_s(x_s)- T_s(y)  ) - \Phi(x_s-y) \right | \bar{\rho}^N(s,dy) ds, \\
& \leq f^N(t-\tau) + \tau \|E^N(t)\|_\infty\,, \\
&+C\, \int_{t-\tau}^{t} \int_{\mathbb{R}^3} \left |   \psi^N\Phi (T_s(x_s)- T_s(y_s)  ) - \Phi(x_s-y_s) \right | \bar{\rho}^N(t,d y_t) ds,
\end{align*}
here we used Remark \ref{rem2} with $y_s=X(s,t,y_t)$. In addition we defined
$$\|E^N(t)\|_\infty:= \underset{0\leq s \leq t }{\sup} \|E^N(s,\cdot)\|_\infty.$$
This being true for almost every $x_{t}$ we obtain
\begin{multline}
f^N(t)\leq f^N(t-\tau) + \tau \|E^N(t)\|_\infty \\ +C \,\underset{x_{t}}{ \esssup}\int_{t-\tau }^{t} \int_{\mathbb{R}^3} \left |   \psi^N\Phi (T_s(x_s)- T_s(y_s)  ) - \Phi(x_s-y_s) \right | \bar{\rho}^N(t,d y_t) ds. 
\end{multline}
Hence, it remains to control the last quantity. We split the integral on $\mathbb{R}^3$ into two terms: the first one denoted $J_1$ is the integral over the subset $I$ and the second one denoted $J_2$ the integral over $\mathbb{R}^3 \setminus I$ where
$$
I= \{y_t \::\: |x_t-y_t|\geq 4 f^N(t) e^{\tau L} \},
$$
where $L$ will be defined later.
\paragraph{\textbf{Step 1: Estimate of $J_1$}}
\text { } \\
For all $t-\tau \leq s \leq t$,  we have
\begin{align*}
|x_s-y_s|& \geq |x_t-y_t| - \int_{s}^t |\dot{X}(t',t,x_t)- \dot{X}(t',t,y_t)| dt', \\
& \geq  |x_t-y_t| - \int_s^t | \mathcal{K}\bar{\rho}^N (t', X(t',t,x_t)) -  \mathcal{K}\bar{\rho}^N (t', X(t',t,y_t))| dt', \\
& \geq |x_t-y_t| - \text{Lip }( \mathcal{K} \bar{\rho}^N)  \int_s^t | X(t',t,x_t) -  X(t',t,y_t)| dt'. 
\end{align*}
Using Remarks \ref{rem2} and \ref{remarque2}, formula \eqref{formula_rho} and the uniform bounds \eqref{bound_bar_rhoNinfty}, \eqref{bound_bar_rhoN1}, the Lipschitz constant of $\mathcal{K} \bar{\rho}^N$ is uniformly bounded. This allows us to define the constant $L$ as 
$$
\text{Lip }( \mathcal{K} \bar{\rho}^N) \leq C \|\bar{\rho}_0^N\|_{L^\infty( L^\infty \cap L^1)} \leq L\,.
$$
Applying Gronwall's inequality yields for all $0 \leq t-\tau \leq s \leq t$
$$
|x_s-y_s| \geq |x_t-y_t| e^{-L(t-s)}.
$$
We can make precise now the constant $L := \text{Lip }( \mathcal{K} \bar{\rho}^N)$ which is uniformly bounded with respect to $N$ and $t\in[0,T]$. \\
We have for all $0 \leq t-\tau \leq s \leq t$ and $\tau$ small enough
\begin{equation}\label{formule__1}
|x_s-y_s| \geq |x_t-y_t| e^{-L (t-s)} \geq |x_t-y_t| e^{- L \tau} \geq \frac{1}{2} |x_t-y_t|.
\end{equation} 
Analogously, for almost all $x_s$ and $y_s$ 
\begin{multline*}
|T_s(x_s)-T_s(y_s)| \geq |x_s-y_s| - |T_s(x_s)-x_s| - |T_s(y_s)-y_s|\,,\\ \geq |x_s-y_s| - 2 f^N(s) \geq |x_s-y_s| - 2 f^N(t),
\end{multline*}
where we used the fact that $f^N(t)\geq f^N(s)$. According to the definition of $I= \{y_t \::\: |x_t-y_t|\geq 4 f^N(t) e^{\tau L} \}$, this yields for $\tau$ small enough
 \begin{equation}\label{formule__2}
|T_s(x_s)-T_s(y_s)| \geq \frac{1}{4}|x_t-y_t|.
\end{equation}
Moreover, recall that $T_s(x_s)$ and $T_s(y_s)$ are in the support of $\rho^N(s,\cdot)$ \textit{i.e.} there exists $i\,,\, j$ such that $T_s(x_s)=x_i(s)$ and $T_s(y_s)=x_j(s)$. In addition, estimate \eqref{formule__2} and the definition of $I$ ensures that $i\neq j$. We have then 
\begin{equation}\label{formule__3}
\psi^N\Phi(T_s(x_s)-T_s(y_s)) = \Phi(T_s(x_s)-T_s(y_s)).
\end{equation}
Finally, using estimates \eqref{formule__1}, \eqref{formule__2}, formula \eqref{formule__3} and the Lipschitz-like estimate \eqref{lipschitz} for $\Phi$ we obtain
\begin{align*}
J_1 & = \int_{I} \int_{t-\tau }^{t}  \left |   \Phi (T_s(x_s)- T_s(y_s)  ) - \Phi(x_s-y_s) \right | ds \bar{\rho}^N(t,dy_t), \\
& \leq C \int_{I} \int_{t-\tau }^{t}  \frac{|x_s-T_s(x)|+ | y_s-T_s(y)|}{\min (|x_s-y_s|^2\,,\, |T_s(x)-T_s(y)|^2)} ds \bar{\rho}^N(t,dy_t), \\
& \leq C f^N(t)  \tau \int_I \frac{1}{|x_t-y_t|^2}\bar{\rho}^N(t,dy_t),\\
& \leq C \tau f^N(t) \|\bar{\rho}^N(t)\|_{L^\infty\cap L^1}\,,\\
& \leq C \tau f^N(t)\|\bar{\rho}^N_0\|_{L^\infty\cap L^1}\,,\\
& \leq C \tau f^N(t)\,,
\end{align*} 
where we used Remark \ref{rem2}, formula \eqref{formula_rho} and the uniform bounds \eqref{bound_bar_rhoNinfty}, \eqref{bound_bar_rhoN1}.
\paragraph{\textbf{Step 2: Estimate of $J_2$}}
 \text { } \\
We focus now on 
$$
J_2:= \underset{x_{t}}{\esssup}\int_{t-\tau }^{t} \int_{{}^c I} \left |   \psi^N\Phi (T_s(x_s)- T_s(y_s)  ) - \Phi(x_s-y_s) \right | \bar{\rho}^N(t,d y_t) ds.
$$
Again $T_s(x_s)$ and $T_s(y_s)$ are in the support of $\rho^N(s,\cdot)$ \textit{i.e.} there exists $i\,,\, j$ such that $T_s(x_s)=x_i(s)$ and $T_s(y_s)=x_j(s)$. Moreover if $i=j$ then $\psi^N \Phi(T_s(x_s)-T_s(y_s))=0$. Hence in all cases we have
\begin{align*}
\left|\Phi(x_s-y_s) - \psi^N \Phi(T_s(x_s)-T_s(y_s)) \right |&  \leq |\Phi(x_s-y_s)|+|\psi^N \Phi(T_s(x_s)-T_s(y_s))|\,,\\
& \leq C \left ( \frac{1}{|x_s-y_s|} + \frac{1}{d_{\min}^N(s)} \right ),
\end{align*}
applying the change of variable $y_t=X(t,s,y_s)$ we get
\begin{align*}
\int_{{}^c I} \int_{t-\tau}^{t} \frac{1}{|x_s-y_s|} ds \bar{\rho}^N(t,dy_t) & \leq \|\bar{\rho}^N\|_\infty
 \int_{t-\tau}^{t}\int_{{}^c I} \frac{1}{|x_s-y_s|}  dy_t ds, \\
& = C \int_{t-\tau}^{t} \int_{X(t,s,{}^c I)}  \frac{1}{|x_s-y_s|}  dy_s ds. 
\end{align*}
Denote $K = X(t,s,{}^c I)$, as the flow $X$ preserves the Lebesgue measure we have $|K| = |{}^c I |$. For all $s \in [t-\tau,t]$ and  $a>0$ a direct computation yields
\begin{align*}
\int_{K}  \frac{1}{|x_s-y_s|}  dy_s &=\left ( \int_{K \cap B(x_s,a)} + \int_{K \cap {}^cB(x,a)} \right )\frac{1}{|x_s-y_s|}  dy_s ,\\
& \leq C a^2 + \frac{1}{a} |K| ,
\end{align*}
we choose then $a^3=|K|= |{}^c I|\leq C \left|f^N(t)\right|^3 e^{3L\tau}$ to get 
\begin{equation}\label{terme_1}
\int_{{}^c I} \int_{t-\tau}^{t} \frac{1}{|x_s-y_s|} ds \bar{\rho}^N(t,dy_t) \leq C \tau \left|f^N(t)\right|^2 e^{2 L \tau }.
\end{equation}
For the remaining term we apply Theorem \ref{thm1} and get for all $t - \tau \leq s \leq t $ 
\begin{align*}
\int_{{}^c I} \int_{t-\tau}^{t} \frac{1}{d_{\min}^N(s)} ds \bar{\rho}^N(t,dy_t) &\leq \frac{2}{d_{\min}^N(0)} \int_{{}^c I} \int_{t-\tau}^{t}  ds \bar{\rho}^N(t,dy_t), \\
& \leq C \tau \frac{2 e^{3\tau L}}{d_{\min}^N(0)}  \left|f^N(t)\right|^3.
\end{align*} 
 \paragraph{\textbf{Conclusion}}
 \text { } \\
Gathering these bounds, there exists a constant $K>0$ independent of $N$ such that for $\tau$ small enough and $ 0 < t \leq T $ 
$$
f^N(t) \leq f^N(t-\tau)+ \tau \|E^N (t)\|_\infty + K \tau f^N(t)\left [1 + f^N(t) +\frac{\left|f^N(t)\right|^2}{d_{\min}^N(0)}   \right  ].
$$
We can now apply a discrete Gronwall argument: Note that at time $t=0$, assumption \eqref{hyp5} and formula \eqref{born_wasserstein0} ensures the existence of a positive constant $C_1>1$ such that 
$$
1+ f^N(0)+\frac{\left|f^N(0)\right|^2}{d_{\min}^N(0)} \leq \frac{C_1}{K},
$$ 
hence, we define $T^*\leq T$ as the maximal time for which 
\begin{equation}\label{blow_up_formula}
 1+ f^N(t)+\frac{\left|f^N(t)\right|^2}{d_{\min}^N(0)} \leq \frac{C_1}{K} \,\:\: \forall t \in[0,T^*[.
\end{equation}
Note that $T^*$ a priori depends on $N$, the purpose is to show that this is not the case.
We obtain for all $t\in[0,T^*[$
$$
f^N(t) \leq f^N(t-\tau) + C_1 \tau f^N(t) + \tau \|E^N\|_\infty.
$$
If $\tau$ is small enough we can write 
$$
f^N(t) \leq (1-C_1 \tau)^{-1} f^N(t-\tau) + \frac{\tau}{1-C_1 \tau } \|E^N\|_\infty,
$$
iterating the formula we obtain for $M \in \mathbb{N}^*$
\begin{align*}
f^N(t)& \leq (1-C_1 \tau )^{-M} f^N(t-M\tau) + \tau \underset{k=1}{\overset{M}{\sum}} \frac{1}{(1-C_1 \tau )^k} \|E^N\|_\infty,\\
& \leq (1-C_1 \tau )^{-M} f^N(t-M\tau) + \tau \underset{k=1}{\overset{M}{\sum}} e^{2C_1 \tau k} \|E^N\|_\infty.
\end{align*}
Thanks to the bound $ \frac{1}{1-C_1 \tau} \leq e^{2C_1 \tau}$ for $\tau$ small enough.
We set then $t- M \tau =0$ to get
$$
f^N(t) \leq (1-C_1 \frac{t}{M})^{-M} f^N(0) + \frac{t}{M}  
\overset{M}{\underset{k=1}{\sum}}  e^{2C_1 \frac{t}{M} k} \|E^N\|_\infty.
$$
As $ e^{2C_1 \frac{t}{M} k}  \leq e^{2C_1 t } $ for all $1\leq k \leq M$  the second term yields 
\begin{align*}
\frac{t}{M}  
\overset{M}{\underset{k=1}{\sum}}  e^{2C_1 \frac{t}{M} k} \|E^N\| &\leq t e^{2C_1 t } \|E^N\|_\infty,
\end{align*}
and for $M$ sufficiently large
$$
(1-C_1 \frac{t}{M} )^{-M}  \leq e^{2C_1t}.
$$
Finally for all $t\in[0,T^*[$
$$
f^N(t) \leq f^N(0) e^{2C_1t} +  t e^{2C_1t} \|E^N\|_\infty.$$
In particular we have for all $t \in [0,T^*[$
\begin{equation*}
\begin{split}
&f^N(t)+\frac{\left|f^N(t)\right|^2}{d_{\min}^N(0)}\\
&\leq  f^N(0)e^{2C_1t}+\|E^N\|_\infty Te^{2C_1t}+2\frac{ \left|f^N(0)\right|^2e^{4C_1t}+  \|E^N\|^2_\infty T^2e^{4C_1t} }{d_{\min}^N(0)}\,,\\
& \leq e^{4C_1T}(2+T+2T^2)\left (f^N(0)+\|E^N\|_\infty+ \frac{\left|f^N(0)\right|^2+ \|E^N\|^2_\infty }{d_{\min}^N(0)}\right).
\end{split}
\end{equation*}
Since we have $f^N(0) = O\left (\lambda^N \right)$ and thanks to \eqref{bound_E^N}
\begin{align*}
\frac{\left|f^N(0)\right|^2+ \|E^N\|^2_\infty }{d_{\min}^N(0)}& \lesssim \frac{|\lambda^N|^2}{d_{\min}^N(0)} + d_{\min}^N \,,
\end{align*}
which vanishes according to assumption \eqref{compatibility} and \eqref{hyp5}. This shows that we can take $N$ large enough and depending on $T$, $K$ and $C_1$ such that $T^* \to T$ and formula \eqref{blow_up_formula} holds true up to time $T$. Hence, for $N$ large enough we have for all $t\in[0,T]$
$$
f^N(t) \leq f^N(0) e^{2C_1t} + t e^{2C_1t} \|E^N\|_\infty.$$
Using \eqref{born_wasserstein0} and the fact that $W_1(\rho^N,\bar{\rho}^N) \leq W_\infty(\rho^N,\bar{\rho}^N)\leq f^N$, this implies Lemma \ref{lemme_Wasserstein}.
\end{proof} 
\appendix
\section{Technical lemmas}
We state here an important lemma which is the extension of \cite[Lemma 2.1]{JO} to the new assumptions on the dilution regime introduced in \cite{Hillairet}. We introduce $\tilde{\rho}^N$ an approximation of $\rho^N$ defined as
\begin{equation}\label{tilderhoN}
\tilde{\rho}^N(t,x):= \frac{1}{N} \underset{i=1}{\overset{N}{\sum}} \frac{1_{B_\infty(x_i,\lambda^N/3)}}{\left|B_\infty(x_i,\lambda^N/3) \right|}\,.
\end{equation}
$\tilde{\rho}^N$ is $L^\infty$ and using \eqref{bound_concentration}, one can check that 
\begin{multline}\label{bound_tildethoN}
\|\tilde{\rho}^N\|_{L^\infty} \lesssim \frac{1}{N |\lambda^N|^3}\underset{x \in \mathbb{R}^3}{\sup} \# \left\{ i \in \{ 1 , \cdots, N\} \text{ such that } x_i \in B_\infty(x,\lambda^N/3) \right \}\\ \lesssim \frac{M^N}{N |\lambda^N|^3}  \lesssim \bar{M}\,.
\end{multline}
Moreover, $\tilde{\rho}^N$ is $L^1$ and we have $\|\tilde{\rho}^N\|_{L^1}=1$ by construction.
\begin{lemma}\label{JO}
For all $k \in[0,2]$, under assumptions \eqref{bound_concentration}, \eqref{compatibility},
 if $N$ is large enough, there exists a positive constant $C>0$ such that for all fixed $1 \leq i \leq N$:
\begin{equation}
\frac{1}{N} \underset{j\neq i}{\sum} \frac{1}{d_{ij}^k} \leq C \bar{M}\frac{|\lambda^N|^3}{|d_{\min}^N|^k} +\bar{M}^{k/3}.
\end{equation}
Moreover, if $k=3$ we have
$$\frac{1}{N} \underset{j\neq i}{\sum} \frac{1}{d_{ij}^3} \leq C  \bar{M}\left(\frac{|\lambda^N|^3}{|d_{\min}^N|^3}+| \log (\bar{M}^{1/3} \lambda^N)|+1 \right).
$$
\end{lemma}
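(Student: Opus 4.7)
The strategy is the classical ``swallowing the sum by an integral'' argument adapted to the $\lambda^N$-cube concentration bound \eqref{bound_concentration}: split the sum according to whether $x_j$ is near $x_i$ (within a few cubes of side $\lambda^N$) or far, bound the near part by counting, and compare the far part to an integral against the auxiliary density $\tilde{\rho}^N$ defined in \eqref{tilderhoN}.

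First, I would treat the near contribution. The set $\{j\neq i : |x_j-x_i|_{\infty} \leq \lambda^N\}$ has cardinality at most $M^N$ by definition, and more generally $\{j : |x_j-x_i|_{\infty} \leq C\lambda^N\}$ has cardinality at most $\lceil C\rceil^{3} M^N$ by covering $B_{\infty}(x_i,C\lambda^N)$ by unit cubes. Since $d_{ij}\geq d_{\min}^N$ for every $j\neq i$, and $M^N/N\leq \bar M\,|\lambda^N|^3$ by \eqref{bound_concentration},
\[
\frac{1}{N}\sum_{\substack{j\neq i\\ |x_j-x_i|_\infty\leq C\lambda^N}}\frac{1}{d_{ij}^k}
\;\lesssim\; \frac{M^N}{N\,|d_{\min}^N|^{k}}\;\lesssim\; \bar M\,\frac{|\lambda^N|^3}{|d_{\min}^N|^{k}},
\]
which produces the first term on the right-hand side.

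For the far contribution, I would use the following key geometric fact: if $|x_j-x_i|_\infty\geq 2\lambda^N$ then for every $y\in B_\infty(x_j,\lambda^N/3)$ one has $|y-x_i|\asymp d_{ij}$ up to universal constants. Consequently
\[
\frac{1}{N\,d_{ij}^k}\;\lesssim\; \int_{B_\infty(x_j,\lambda^N/3)}\frac{1}{N\,|B_\infty(x_j,\lambda^N/3)|}\,\frac{dy}{|y-x_i|^k},
\]
and summing over the far indices gives
\[
\frac{1}{N}\sum_{j\text{ far}}\frac{1}{d_{ij}^k}\;\lesssim\;\int_{|y-x_i|\geq c\lambda^N}\frac{\tilde\rho^N(y)}{|y-x_i|^k}\,dy.
\]
Now I would invoke the two uniform bounds on $\tilde\rho^N$, namely $\|\tilde\rho^N\|_\infty\lesssim \bar M$ (see \eqref{bound_tildethoN}) and $\|\tilde\rho^N\|_{L^1}=1$, and split the integral at the radius $R_0:=\bar M^{-1/3}$. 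Note $R_0>c\lambda^N$ for $N$ large thanks to the compatibility assumption \eqref{compatibility} together with \eqref{hyp3}. For $k\in[0,2]$ one gets
\[
\int_{|y-x_i|\leq R_0}\frac{\tilde\rho^N}{|y-x_i|^k}\lesssim \bar M\,R_0^{3-k}=\bar M^{k/3},\qquad
\int_{|y-x_i|\geq R_0}\frac{\tilde\rho^N}{|y-x_i|^k}\leq R_0^{-k}=\bar M^{k/3},
\]
yielding the $\bar M^{k/3}$ term. For $k=3$ the inner piece becomes logarithmic, $\bar M\int_{c\lambda^N}^{R_0} r^{-1}\,dr\lesssim \bar M\,|\log(\bar M^{1/3}\lambda^N)|$, and the outer piece is bounded by $\bar M\,\|\tilde\rho^N\|_{L^1}=\bar M$, which accounts for the extra $\bar M(|\log(\bar M^{1/3}\lambda^N)|+1)$ term in the $k=3$ statement.

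The only delicate step is the passage from the $\ell^\infty$-based concentration bound to Euclidean distance control in the near/far decomposition; everything else is direct computation using \eqref{bound_tildethoN}, \eqref{bound_concentration} and \eqref{compatibility}. No step looks like a substantial obstacle.
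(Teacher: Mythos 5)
Your proposal is correct and follows essentially the same route as the paper's proof: split the sum into a near part controlled by the concentration $M^N$ via \eqref{bound_concentration} and a far part compared to an integral of $|x_i-y|^{-k}$ against $\tilde{\rho}^N$, then split that integral at the radius $\bar{M}^{-1/3}$ using $\|\tilde{\rho}^N\|_{L^\infty}\lesssim\bar M$ and $\|\tilde{\rho}^N\|_{L^1}=1$, with the logarithmic correction for $k=3$. The only differences (near/far threshold $2\lambda^N$ versus $\lambda^N$, and the two-sided comparability $|y-x_i|\asymp d_{ij}$) are cosmetic and affect only constants.
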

\begin{proof}
We fix $i =1$ and the same holds true for all $1 \leq i \leq N$.
% We consider a truncation function $\psi^N(x):= \psi\left( \frac{x}{\lambda^N} \right)$ where $\psi=0$ on $B(0,1/2)$ and $\psi=1$ on ${}^c B(0,2/3)$.
We use the following shortcut 
$$\mathcal{I}_1:=\{j\in \{1,\cdots,N \} \text{ such that } |x_1-x_j|_\infty \leq \lambda^N \}.$$
The sum can be written as follows:
\begin{align*}
\frac{1}{N} \underset{j\neq 1}{\sum} \frac{1}{d_{1j}^k}&= \frac{1}{N} \underset{\underset{j\neq 1}{j \in \mathcal{I}_1}}{\sum} \frac{1}{d_{1j}^k} + \frac{1}{N} \underset{j\not \in \mathcal{I}_1}{\sum} \frac{1}{d_{1j}^k} \,,\\
& \leq \frac{1}{N} \frac{M^N}{|d_{\min}^N|^k}+\frac{1}{N} \underset{j\not \in \mathcal{I}_1}{\sum} \frac{1}{d_{1j}^k} \,,\\
&\leq \bar{M} \frac{|\lambda^N|^3}{|d_{\min}^N|^k}+\frac{1}{N} \underset{j\not \in \mathcal{I}_1}{\sum} \frac{1}{d_{1j}^k} \,.
\end{align*}
For the second term in the right hand side, note that, for all $ y \in B_\infty(x_j,\lambda^N/3)$, $ j\not \in \mathcal{I}_1 $ we have 
$$
|x_1-y|_\infty \geq |x_1-x_j|_\infty - |x_j -y|_\infty \geq 2/3 \lambda^N\,, 
$$
this yields
$$
|x_1-x_j|_\infty \geq |x_1-y|_\infty - \lambda^N/3 \geq |x_1-y|_\infty/2 \,.
$$
Hence, we have for all constant $L>2/3\lambda^N$
\begin{align*}
\frac{1}{N} \underset{j\not \in \mathcal{I}_1}{\sum} \frac{1}{d_{1j}^k} &\leq \frac{2^k}{N} \underset{j\not \in \mathcal{I}_1}{\sum} \int_{B_\infty(x_j, \lambda^N/3)} \frac{1}{|B_\infty(x_j,\lambda^N/3)|} \frac{1}{|x_1-y|^k} dy \,,\\ 
&\lesssim\int_{{}^c B(x_1,2/3 \lambda^N)}\frac{1}{|x_1-y|^k}\tilde{\rho}^N(t,dy)\,,\\
& \leq \|\tilde{\rho}^N\|_{L^\infty} \int^L_{2/3|\lambda^N|} r^{2-k} dr +  \int_{{}^c B(x_1,L)} \frac{1}{|x_1-y|^k}\tilde{\rho}^N(t,dy) \,,\\
 &\leq \|\tilde{\rho}^N\|_{L^\infty} \frac{L^{3-k}-\left( 2/3|\lambda^N|\right)^{3-k}}{3-k}+  \frac{\|\tilde{\rho}^N\|_{L^1}}{L^k} \,,\\
 &\lesssim \bar{M} \frac{L^{3-k}}{3-k}+  \frac{1}{L^k} \,.
\end{align*}
One can show that the optimal constant $L>2/3\lambda^N$ is $L= \frac{1}{\bar{M}^{1/3}}$. Since $\underset{N \to \infty}{\lim}\lambda^N=0$, this choice of $L$ is possible for $N$ large enough such that $ \lambda^N<\frac{3}{2\bar{M}^{1/3}}$. Hence, we obtain
$$
\frac{1}{N} \underset{j\not \in \mathcal{I}_1}{\sum} \frac{1}{d_{1j}^k} \lesssim \frac{4-k}{3-k} \bar{M}^{k/3}.
$$
If $k=3$, we integrate the term $r^{-1}$ keeping the same value for $L$ as before
\begin{align*}
\frac{1}{N} \underset{j\not \in \mathcal{I}_1}{\sum} \frac{1}{d_{1j}^3}& \leq \|\tilde{\rho}^N\|_{L^\infty} \int^{\frac{1}{\bar{M}^{1/3}}}_{2/3|\lambda^N|} \frac{dr}{r} +  \int_{{}^c B(x_1,\frac{1}{\bar{M}^{1/3}})} \frac{1}{|x_1-y|^3}\tilde{\rho}^N(t,dy) \,,\\
 &\leq \bar{M} \left (\log \left(\frac{1}{\bar{M}^{1/3}|\lambda^N|} \right)+ \log\left(3/2 \right)\right)+  \bar{M} \,,\\
 &\leq 2 \bar{M}(| \log (\bar{M}^{1/3} \lambda^N)|+1 ) \,,
\end{align*}
for $N$ large enough to ensure $\frac{3}{2} \leq \frac{1}{\bar{M}^{1/3} |\lambda^N|}$.
\end{proof}
The following results are used for the control of the particle concentration $M^N$: 
$$
M^N(t) := \underset{x\in \mathbb{R}^3}{\sup} \Big\{ \# \big\{ i \in \{1,\cdots,N\} \text{ such that } x_i(t) \in \overline{B_\infty(x,\lambda^N)}  \big \}\Big\}.
$$
We recall the definition of $L^N$ introduced in \eqref{concentration_bis}: 
$$
L^N(t):= \underset{i}{\max} \# \left \{j \in \{1,\dots,N\} \text{ such that } |x_i(t)-x_j(t)|_\infty \leq \lambda^N \right \}.
$$
The following lemma shows that the two definitions are equivalent.
\begin{lemma}\label{equivalence}
We have 
$$
L^N(t) \leq M^N(t) \leq 8 L^N(t).
$$
\end{lemma}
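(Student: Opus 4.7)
The plan is to treat the two inequalities separately. Both reduce to elementary geometric observations about $\ell^\infty$-balls, which are axis-aligned cubes.

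For the lower bound $L^N(t) \leq M^N(t)$, the idea is simply that $L^N$ is already of the same form as $M^N$, but with the supremum restricted to center points lying on the particle cloud. More precisely, for any fixed index $i$, the set $\{j : |x_i(t)-x_j(t)|_\infty \leq \lambda^N\}$ is exactly the set of particles contained in $\overline{B_\infty(x_i(t),\lambda^N)}$. Choosing $x=x_i(t)$ as a valid test center in the definition of $M^N(t)$ gives the cardinality bound, and taking the maximum over $i$ yields $L^N(t)\leq M^N(t)$.

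For the upper bound $M^N(t) \leq 8 L^N(t)$, I would fix an arbitrary $x\in\mathbb{R}^3$ and cover the cube $\overline{B_\infty(x,\lambda^N)}$, which has side $2\lambda^N$, by the eight axis-aligned sub-cubes $C_1,\dots,C_8$ of side $\lambda^N$ obtained by splitting each coordinate direction at $x$. Each $C_k$ has $\ell^\infty$-diameter $\lambda^N$, so if $C_k$ contains any particle we pick one, say $x_{i_k}(t)\in C_k$, and every other particle in $C_k$ satisfies $|x_j(t)-x_{i_k}(t)|_\infty\leq\lambda^N$; hence $\#\{j:x_j(t)\in C_k\}\leq L^N(t)$. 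Empty cubes contribute zero. Summing over $k=1,\dots,8$ and taking the supremum over $x$ gives $M^N(t)\leq 8 L^N(t)$.

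I do not anticipate a real obstacle here: the only small subtlety is ensuring that the eight sub-cubes (closed, to cover the closed ball) genuinely have $\ell^\infty$-diameter equal to $\lambda^N$ rather than strictly larger, so the pigeonhole counting argument matches exactly the definition of $L^N$. Once the covering is set up correctly, both inequalities are immediate.
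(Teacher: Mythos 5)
Your proof is correct and follows essentially the same route as the paper: the first inequality by taking $x=x_i(t)$ as a test center, and the second by covering the cube $\overline{B_\infty(x,\lambda^N)}$ of side $2\lambda^N$ with eight cubes of side $\lambda^N$ and using the triangle inequality (diameter argument) to bound each sub-cube's count by $L^N(t)$. The only difference is cosmetic: the paper passes through the intermediate quantity $M^N_{1/2}$ and reduces its supremum to centers near particles, whereas you anchor the count directly at a particle chosen in each nonempty sub-cube, which is a slight streamlining of the same idea.
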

\begin{proof}
The first inequality is trivial. To prove the second one note that we have: 
\begin{multline*}
\underset{x\in \mathbb{R}^3}{\sup} \Big\{ \# \big\{ i \in \{1,\cdots,N\} \text{ such that } x_i \in \overline{B_\infty(x,\lambda^N)}  \big \}\Big\} \leq \\
 8\, \underset{x\in \mathbb{R}^3}{\sup} \Big\{ \# \big\{ i \in \{1,\cdots,N\} \text{ such that } x_i \in \overline{B_\infty(x,{\lambda^N}/{2})}  \big \}\Big\}.
\end{multline*}
Indeed, for all $x \in \mathbb{R}^3 $ there exists $\bar{x}_k$, $k=1,\cdots,8$ such that 
$$
\overline{B_\infty\left (x,\lambda^N \right )}\subset \underset{k}{\overset{8}{\bigcup}} \overline{B_\infty \left (\bar{x}_k,\frac{\lambda^N}{2} \right )},
$$
this yields
\begin{multline*}
\big\{ i \in \{1,\cdots,N\} \text{ such that } x_i \in \overline{B_\infty(x, \lambda^N )}  \big \}\\
 \subset \underset{k}{\overset{8}{\bigcup}} \big\{ i \in \{1,\cdots,N\} \text{ such that } x_i \in \overline{B_\infty(\bar{x}_k,\lambda^N/2)}  \big \}.
\end{multline*}
Taking the supremum in the right hand side and then in the left one we obtain
\begin{multline}\label{inegalite1}
\underset{x\in \mathbb{R}^3}{\sup} \Big\{ \# \big\{ i \in \{1,\cdots,N\} \text{ such that } x_i \in \overline{B_\infty(x,\lambda^N)}  \big \}\Big\} \leq \\ 8\, \underset{x\in \mathbb{R}^3}{\sup} \Big\{ \# \big\{ i \in \{1,\cdots,N\} \text{ such that } x_i\in \overline{B_\infty(x,\lambda^N/2)}  \big \}\Big\}.
\end{multline}
Moreover, we remark that the supremum in the right hand side over all $x\in \mathbb{R}^3$ can be reduced to the supremum over $\underset{i}{\bigcup} \overline{B_\infty(x_i, \frac{\lambda^N}{2})}$. Now consider $x \in \underset{i}{\bigcup} \overline{B_\infty(x_i, \frac{\lambda^N}{2})}$, there exists $1 \leq i_0 \leq N$ such that $ |x-x_{i_0}|_\infty \leq \frac{\lambda^N}{2}$, we have then for all $j \neq i_0$ such that $ |x-x_j|_\infty \leq \frac{\lambda^N}{2} $: 
$$
|x_j-x_{i_0}|_\infty \leq |x_j-x|_\infty + |x-x_{i_0}|_\infty \leq \lambda^N,
$$
which means that for all $x  \in \underset{i}{\bigcup} \overline{B_\infty(x_i, \frac{\lambda^N}{2})}$ there exists $1 \leq i_0 \leq N$ such that 
\begin{multline*}
\big\{ 1 \leq j \leq N, \text{ such that } x_j \in\overline{B_\infty(x,{\lambda^N}/{2})}  \big \}\\ \subset \big \{ 1 \leq j \leq N, \text{ such that }  |x_j-x_{i_0}|_\infty \leq \lambda^N \big \}.
\end{multline*}
Taking the maximum over all $i_0$ in the right hand side, and then the supremum over all $x  \in \underset{i}{\bigcup} \overline{B_\infty(x_i, \frac{\lambda^N}{2})}$ we obtain 
\begin{multline}\label{inegalite2}
\underset{x}{\sup} \Big\{ \# \big\{ i \in \{1,\cdots,N\} \text{ such that } x_i\in \overline{B_\infty(x,{\lambda^N}/{2})}  \big \}\Big\} \leq \\
\underset{i}{\max} \# \left \{j \in \{1,\dots,N\} \setminus\{i\} \text{ such that } |x_i-x_j|_\infty \leq \lambda^N \right \}.
\end{multline}
Gathering inequality \eqref{inegalite1} and \eqref{inegalite2} concludes the proof.
\end{proof}
More generally we define for all $\beta>0$: 
$$
L^N_\beta(t):= \underset{i}{\max} \# \left \{j \in \{1,\dots,N\}  \text{ such that } |x_i(t)-x_j(t)|_\infty \leq {\beta}\lambda^N \right \},
$$
and
$$
M^N_\beta(t) := \underset{x\in \mathbb{R}^3}{\sup} \Big\{ \# \big\{ i \in \{1,\cdots,N\} \text{ such that } x_i(t) \in \overline{B_\infty(x,{\beta}\lambda^N)}  \big \}\Big\},
$$
with the notation $$M^N_1(t):= M^N(t)\:\:,\:\:L^N_1(t):= L^N(t).$$ The previous results yields 
\begin{corollary}\label{scaling_concentration}
For all $\beta>0$ and all $\alpha>1$ we have
$$
L^N_{\alpha \beta} (t) \leq 8 \lceil \alpha \rceil^3 L_\beta^N(t),
$$
where $\lceil \cdot \rceil$ denotes the ceiling function.
\end{corollary}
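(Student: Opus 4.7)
The plan is to reduce the statement to a covering argument combined with the scale-invariant version of Lemma \ref{equivalence}. The key observation is that the definitions of $L^N_\beta$ and $M^N_\beta$ are self-similar in $\beta$: the proof of Lemma \ref{equivalence} nowhere uses that $\beta=1$ and gives, verbatim, the inequality $M^N_\beta(t) \leq 8 L^N_\beta(t)$ for every $\beta>0$. I would state this scale-invariant version as a first remark so that it can be used freely below.

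Next I would carry out the covering step. Fix $i$ and look at the $l^\infty$-ball $\overline{B_\infty(x_i,\alpha\beta\lambda^N)}$, which is a cube of side $2\alpha\beta\lambda^N$. Since $\alpha>1$, this cube can be tiled by exactly $\lceil\alpha\rceil^3$ (possibly overlapping) cubes of side $2\beta\lambda^N$, i.e.\ by $\lceil\alpha\rceil^3$ sets of the form $\overline{B_\infty(\bar y_k,\beta\lambda^N)}$ with centers $\bar y_k$ chosen on a regular grid inside the big cube (this is a direct analogue of the splitting used in the proof of Lemma \ref{equivalence}, now with $\lceil\alpha\rceil$ sub-intervals per coordinate axis). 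Consequently
\begin{equation*}
\bigl\{ j \in \{1,\ldots,N\} : |x_i(t)-x_j(t)|_\infty \leq \alpha\beta\lambda^N \bigr\}
\subset \bigcup_{k=1}^{\lceil\alpha\rceil^3} \bigl\{ j : x_j(t) \in \overline{B_\infty(\bar y_k,\beta\lambda^N)} \bigr\}.
\end{equation*}

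Taking cardinalities and bounding each term on the right by $M^N_\beta(t)$, then taking the maximum over $i$ on the left, one obtains $L^N_{\alpha\beta}(t) \leq \lceil\alpha\rceil^3\, M^N_\beta(t)$. Finally, the scale-invariant Lemma \ref{equivalence} gives $M^N_\beta(t) \leq 8 L^N_\beta(t)$, and combining the two inequalities yields the claim $L^N_{\alpha\beta}(t) \leq 8\lceil\alpha\rceil^3 L^N_\beta(t)$.

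There is no real obstacle here: the statement is essentially a bookkeeping consequence of Lemma \ref{equivalence}. The only small point that deserves care is the counting of cubes needed to cover the big cube, where one must use $\lceil\alpha\rceil$ rather than $\alpha$ since $\alpha$ need not be an integer; this is exactly where the ceiling function in the statement comes from.
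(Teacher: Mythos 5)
Your proof is correct and follows essentially the same route as the paper: a covering of the cube of side $2\alpha\beta\lambda^N$ by $\lceil\alpha\rceil^3$ cubes of side $2\beta\lambda^N$, followed by the (scale-invariant) inequality $M^N_\beta(t)\leq 8L^N_\beta(t)$ from Lemma \ref{equivalence}. The only cosmetic difference is that you cover the ball centered at $x_i$ directly and skip the intermediate quantity $M^N_{\alpha\beta}$, which the paper passes through before applying the same covering and the same lemma.
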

\begin{proof}
For sake of clarity we set $\beta=1$ and the proof remains the same for all $\beta>0$.  The idea is to show an equivalent formula for $M^N$ and use Lemma \ref{equivalence}. Analogously to the proof of Lemma \ref{equivalence}, for all  $x \in \mathbb{R}^3 $ there exists $\bar{x}_k$, $k=1,\cdots,\lfloor \lambda \rfloor^3$ such that 
$$
\overline{B_\infty \left (x,\alpha \lambda^N \right )}\subset \underset{k=1}{\overset{\lceil \alpha \rceil^3}{\bigcup}}\: \overline{B_\infty \left (\bar{x}_k,\lambda^N \right )}.
$$
This yields, with the definition of $M_\lambda^N$: 
$$
M^N_\alpha \leq \lceil \alpha \rceil^3 M^N(t).
$$
Finally, we apply Lemma \ref{equivalence} to get 
$$
L_\alpha^N(t) \leq M^N_\alpha(t) \leq  \lceil \alpha \rceil^3 M^N(t) \leq 8  \lceil \alpha \rceil^3 L^N(t)\,,
$$
which completes the proof.
\end{proof}

%For acknowledgements section, please don't number the section, please begin it with \section*{Acknowledgements}
\section*{Acknowledgments}
The author would like to thank Matthieu Hillairet for introducing the subject and sharing his experience for overcoming the difficulties during this research. The author is also thankful to the referee for all his important suggestions.
% You may incorporate your references as follows in your main tex file.
% Using BibTex is not recommended but can be handled.

\end{document}